%% file: main_3.tex
\documentclass[11pt]{article}
\usepackage{amsfonts,latexsym,amsthm,amsmath,amscd,geometry}
\usepackage{graphicx}
\usepackage{amssymb}
\usepackage{latexsym}
\usepackage{xcolor}
\usepackage{indentfirst}
\makeatletter
\@addtoreset{equation}{section}
\makeatother
\input{newmacros}

\newcommand \nc{\newcommand}
\theoremstyle{plain}
\newtheorem{theorem}{Theorem}[section]
\newtheorem{lemma}[theorem]{Lemma}
\newtheorem{proposition}[theorem]{Proposition}

\newtheorem{remark}[theorem]{Remark}

\nc{\ba}{\begin{array}}\nc{\ea}{\end{array}}
\nc{\be}{\begin{eqnarray}}\nc{\ee}{\end{eqnarray}}
\nc{\beq}{\begin{equation}}\nc{\eeq}{\end{equation}}
\nc{\bex}{\begin{eqnarray*}}\nc{\eex}{\end{eqnarray*}}
\nc{\btm}{\begin{theorem}} \nc{\etm}{\end{theorem}}
\nc{\blm}{\begin{lemma}} \nc{\elm}{\end{lemma}}
\nc{\R}{\mathbb{R}}  

\newcommand{\subjclass}[1]{\par\medskip\noindent 2020 \textit{Mathematics Subject Classification}. #1.}
\newcommand{\keywords}[1]{\par\medskip\noindent \textit{Key words and phrases}. #1.}

\author{}
\date{}

\begin{document}
\title{The $L^2$ contraction of solutions with large perturbation in multiple space dimensions from the oscillatory dispersive planar shock}

\author{Geng Chen
\footnote{Department of Mathematics, University of Kansas, Lawrence, KS 66045, U.S.A. Email: gengchen@ku.edu},
\quad
Harish Ravichandar
\footnote{Department of Mathematics, University of Kansas, Lawrence, KS 66045, U.S.A. Email: h415r996@ku.edu},
\quad
Yannan Shen
\footnote{Department of Mathematics, University of Kansas, Lawrence, KS 66045, U.S.A. Email: yshen@ku.edu}
}

\maketitle
\begin{abstract}
In this paper, we show the \(L^2\)-contraction property of the planar oscillatory or monotone dispersive shock profiles of the dissipative Kadomtsev-Petviashvili (KP) equation modelling water waves and the multi-dimensional Korteweg–de Vries (KdV) Burgers equation under arbitrarily large perturbations in two space dimensions, up to Lipschitz time-dependent shifts. This stability result extends the results of two recent papers by Chen, Eun, Kang, and Shen on \(L^2\)-contraction for the KdV-Burgers equation.

\subjclass{35B35; 35Q53; 35L67; 76L05}

\keywords{Dissipative Kadomtsev--Petviashvili equation, multi-dimensional Korteweg–de Vries Burgers equation,  Oscillatory shock, $L^2$ contraction, Uniform stability}
\end{abstract}
\section{Introduction}
In this paper, we consider two models which can be used to describe the dispersive shock wave: dissipative Kadomtsev-Petviashvili  equation and the multi-dimensional Korteweg–de Vries (KdV)–Burgers equation, also known as the Zakharov–Kuznetsov–Burgers (ZKB) equation.

The main goal of this paper is to study the stability of the planar dispersive shock wave under the perturbation in multiple space dimensions.
\paragraph{\bf Dissipative KP type equation:}
\setcounter{equation}{0}
We first consider a dissipative Kadomtsev-Petviashvili type equation  in two space dimensions (2D):
\begin{equation}\label{KP FIrst equation}
    (u_{t} + uu_x -\varepsilon_1 u_{xx}-\varepsilon_2 u_{yy}+\delta u_{xxx})_x+\lambda u_{yy}=0,
\end{equation}     
or equivalently
\begin{equation}
\label{KP main equation}
    \begin{cases}
    u_{t} + uu_x -\varepsilon_1 u_{xx}-\varepsilon_2 u_{yy}+\delta u_{xxx}=v \\
    v_x=-\lambda u_{yy},  
        \end{cases}
\end{equation} 
with initial data
\begin{equation}\label{ID1} 
    u(0,x,y)=u^0(x,y), \qquad u^0(x,y) = u^0(x,y+1), 
    \end{equation}
where \(u = u(t,x,y)\in \mathbb{R}\) and \(v = v(t,x,y)\in \mathbb{R}\) are unknowns defined on \(t > 0, \ x \in \mathbb{R}\), \(y\in \mathbb{T}^{1}\) with \(\mathbb{T}^{1}\) :=\(\mathbb{R} / \mathbb{Z}\), denoting the one-dimensional flat torus, while \(\varepsilon_1>0,\ \varepsilon_2>0,\ \delta>0\), and \(\lambda=\pm1\) are constants. For simplicity, we always use the notation $$\Omega := \mathbb{R}\times \mathbb{T}^{ 1}.$$ 

When \(\varepsilon_1=\varepsilon_2 = 0\), equation \eqref{KP FIrst equation} reduces to the standard (non-dissipative) KP equation, which is referred to as KP-I when \(\lambda = -1\) and KP-II when \(\lambda = 1\). 

The KP equation
is a prototypical universal model for weakly two-dimensional,
weakly nonlinear waves \cite{kadomtsev1970stability}. This equation is used to model shallow water; for example, see \cite{ablowitz1981solitons}, \cite{kodama2018solitons}. We also refer the reader to the recent paper \cite{biondini2025mach}, which studies the Mach reflection problem for the KP equation in the KP-II case \((\lambda>0)\), as well as to the references therein for further background on the KP equation. See other related works \cite{feng2002stable,fukuda2026asymptotic,molinet2000cauchy} and the references in \cite{biondini2025mach}, where, in particular, some dissipative KP equations are considered.

The equation \eqref{KP FIrst equation} can be viewed as a dissipative version of KP equation, where $\varepsilon_1$ and $\varepsilon_2$ are two positive dissipative constants. Physically, $\varepsilon_2$ is much smaller than $\varepsilon_1$.  Especially, this model can be used to describe the 2D viscous-dispersive shock wave. Without confusion, from now on, we always use KP equation to denote \eqref{KP FIrst equation}. See Remark \ref{remark2} for some further discussions on the model.

The meaning of the variable $u$ depends on the physical context considered, while $x$ represents the main direction of propagation, $y$ represents a transverse spatial variable and $t$ the time variable. In shallow water, $u$ quantifies the deviation of the water surface from its equilibrium value.

\paragraph{\bf ZKB equation:}
We also consider the following multi-dimensional Korteweg–de Vries (KdV)–Burgers equation, also known as the Zakharov–Kuznetsov–Burgers (ZKB) equation.
\begin{eqnarray}\label{KdVzk main equation0}
    u_{t} + u u_{x_1} - \varepsilon \Delta u + \delta u_{x_1x_1x_1} + \kappa \sum_{i=2}^n u_{x_1x_ix_i} = 0, \qquad u(0,x) = u_{0}(x), \nonumber \\
    u(x_1, \ldots,x_i, \ldots,x_n)=u(x_1, \ldots,x_i+1, \ldots,x_n) \quad \forall i \in \{2,3,...,n\}. 
\end{eqnarray}
where \(u = u(t,x) \in \mathbb{R}\) is the unknown function, defined for \(t > 0\) and
\(x = (x_{1}, x')\) with \(x_{1} \in \mathbb{R}\) and
\(x' \in \mathbb{T}^{n-1} := \mathbb{R}^{n-1} / \mathbb{Z}^{n-1}\), \(n \geq 2\) being the \(n-1\) dimensional flat torus.

The equation has been studied in superthermal electron–positron–ion plasmas. For the derivation of the model when \(n=3\), see \cite{el2011zakharov}. This equation has also been investigated in the context of wave turbulence theory; see \cite{ma2022almost}. When $\kappa$=0, the equation reduces to a multidimensional KdV-type equation; see \cite{carvajal2017well}.

\bigskip

In this paper, we establish the \(L^2\)-contraction and stability of 2D large perturbations around the planar dissipative shock for both \(\lambda = -1\) and \(\lambda = 1\), corresponding to the KP-I and KP-II cases, respectively, and the ZKB equation. 

In particular, when the solution $u$ depends only on $x$ (or $x_1$), i.e. in the case of planar waves, both the KP and ZKB equations reduce to the Korteweg-De Vries (KdV)-Burgers equation
\beq\label{kdvb}
  u_{t} + uu_x -\varepsilon_1 u_{xx}+\delta u_{xxx}=0,
  \eeq
  where $\varepsilon_1=\varepsilon$, $x=x_1$ for the ZKB equation.

Let's first consider a planar traveling viscous-dispersive shock wave solution, along the $x$ direction: \(\tilde{u} (\xi) = \tilde{u} (x - \sigma t)\) for (\ref{KP main equation}) satisfying
\begin{equation}
\label{shock equation}
\begin{cases}
    -\sigma \tilde{u}' + (\frac{\tilde{u}^2}{2})^{\prime} = \varepsilon_1 \tilde{u}^{\prime \prime} - \delta\tilde{u}^{\prime \prime \prime} , \\
    \tilde{u} (\xi)\to u_{\pm}\mathrm{~as~}\xi \to \pm \infty ,\\
    \tilde{u}^\prime(\xi) \to 0\ \mathrm{as}\ \xi \to \pm  \infty.
    \end{cases}
\end{equation}
Here the speed of shock \(\sigma\) is determined by the Rankine-Hugoniot condition
\[\sigma = \frac{u_- + u_+}{2},\]
and the constants \(u_{\pm}\) satisfy the Lax entropy condition \(u_{- } > u_{+}\). 
This traveling wave solution $\tilde{u}$ is indeed a solution for the KdV-Burgers equation \eqref{kdvb}.

In this paper, we consider the stability of large perturbations and $L^2$-contraction up to a Lipschitz time-dependent shift for the 2D solution $u$ of \eqref{KP FIrst equation} and multi-D solutions of \eqref{KdVzk main equation0}, around the viscous-dispersive planar shock wave in \eqref{shock equation}. More precisely, we study the stability of the solution $u$, satisfying
$\lim_{x\rightarrow \pm \infty}u(t,x,y)=u_\pm,$ for \eqref{KP main equation} and \eqref{KdVzk main equation0} around the traveling wave solution $\tilde{u}$ for \eqref{shock equation}, under any large $L^2$ perturbation.

 The behaviors of the viscous-dispersive shock wave described by \(\util\) for \eqref{shock equation} depend on the critical parameter \( \frac{\d(u_--u_+)}{2\e_1^2}\).
In fact, when \( \frac{\d(u_--u_+)}{2\e_1^2}\leq \frac{1}{4}\), i.e., when the viscosity effect dominates the dispersion effect, the linearized equation of \eqref{shock equation} around the equilibrium state \(\util=u_-\) has real eigenvalue(s).
So the traveling wave solution \(\util\) is monotonically decreasing.
The much more interesting case is when the dispersion effect dominates, i.e., when \( \frac{\d(u_--u_+)}{2\e_1^2}> \frac{1}{4}\).
In this case, the non-monotone traveling viscous-dispersive shock wave solutions \(\util\) oscillate around \(\util=u_-\) infinitely many times as \(\x \to -\infty\). See the classical paper \cite{bona} or \cite{chen2026mono,chen2026uniform}. 

The stability of oscillatory viscous-dispersive shock for the KdV-Burgers equation is an important and interesting problem. In a very recent paper, \cite{chen2026uniform}, the $L^2$-contraction and stability theory has been established when 
\(  \frac{1}{4}<\frac{\d(u_--u_+)}{2\e_1^2}<\frac{1}{2}.\)
This result relies on the sharp estimate on the traveling wave solution, which will be introduced in Section 2. This will also serve as our basis to study the stability of 2D perturbation. Note the upper bound $\frac{1}{2}$ for $\frac{\d(u_--u_+)}{2\e_1^2}$ is not optimal for our method, as for the KdV-Burgers equation. Here we still use it to make the argument much simpler, and focus on how to show the $L^2$-contraction for the non-monotonic case.

One can find another interesting paper \cite{Hur} giving a sufficient condition on the $L^2$-contraction for KdV-Burgers equation, together with a computer-assisted analysis on this condition for the non-monotonic case. Especially, the upper bound of $\frac{\d(u_--u_+)}{2\e_1^2}$ considered in \cite{Hur} is larger than $\frac{1}{2}$.

The method in \cite{chen2026uniform} also gives a new proof for the $L^2$-contraction for the monotonic case when \(0<\frac{\d(u_--u_+)}{2\e_1^2}\leq \frac{1}{4}\), see \cite{chen2026mono}, and also \cite{Hur} for an earlier proof. 

The goal of the current paper is to extend the theory in \cite{chen2026mono,chen2026uniform} to multi-D solutions.

We now state the main results of this paper.
Since the results for the ZKB equation are very similar to those for the KP equation, we defer them to Section 7 and present only the KP results here.

We introduce the following function space:
\[
X_T :=
\left\{
u : \mathbb{R}_+ \times \Omega \to \mathbb{R}
\;\middle|\;
\begin{aligned}
&u - f \in C([0,T];H^1(\Omega)) \cap L^2(0,T;H^2(\Omega)), \\
&\int_{\mathbb{R}} (u(t,x,y)-f(x)) dx=0,
\quad \forall\, t \in [0,T], \ \forall\, y \in \mathbb{T}^1
\end{aligned}
\right\},
\]
where $f$ is a smooth monotone profile satisfying
\[
f(x) :=
\begin{cases}
u_-, & x \leq -1, \\
u_+, & x \geq 1.
\end{cases}
\]

First, we state a global existence result, where the proof is standard. To the best of our knowledge, there is no existence result covering the case where $\lim_{x\rightarrow -\infty}u(t,x,y)$ and $\lim_{x\rightarrow +\infty}u(t,x,y)$ take different values.
\begin{theorem}\label{gloabal existence}
Assume that the initial data $u^0$ satisfies the condition \beq\label{meanzero}
\int_{\mathbb R} (u^0(x,y)-f(x))dx=0.
\eeq
and
\[
u^0 - f \in H^s(\Omega)
\]
for any $s>1$, where $f$ is a smooth monotone profile satisfying
\[
f(x) :=
\begin{cases}
u_-, & x \leq -1, \\
u_+, & x \geq 1.
\end{cases}
\]
Then, for any $T>0$, there exists a unique solution to \eqref{KP main equation} and \eqref{ID1} satisfying
\[
u \in X_T.
\]
\end{theorem}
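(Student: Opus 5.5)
We write $u=f+w$ and solve for the perturbation $w$ with $w(0)=w^0:=u^0-f\in H^s(\Omega)$. The nonlocal term is handled first. Integrating the defining relation of $v$ in $x$ from $-\infty$ gives
\[
v(t,x,y)=-\lambda\int_{-\infty}^x u_{yy}(t,z,y)\,dz=-\lambda\,\partial_y^2\partial_x^{-1}w ,
\]
since $f_{yy}=0$. The constraint $\int_{\mathbb R} w\,dx=0$ in the definition of $X_T$ is what makes $\partial_x^{-1}w$ decay as $x\to+\infty$. Without it, $v$ would tend to the nonzero limit $-\lambda\partial_y^2\int w\,dx$, which would contradict $u\to u_+$.

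\textbf{Step 1: conservation of the zero-mean constraint.} Integrate the first equation of \eqref{KP main equation} in $x$ over $\mathbb R$. All $x$-derivative terms vanish because $u\to u_\pm$ and the derivatives decay. This yields
\[
\partial_t\int(u-f)\,dx-\varepsilon_2\partial_y^2\int(u-f)\,dx=\int v\,dx.
\]
The right side equals $-\lambda\partial_y^2\int_{\mathbb R}\int_{-\infty}^x w\,dz\,dx$, and this is not automatically zero. So the correct formulation is the differentiated form \eqref{KP FIrst equation}, with $v$ defined by $v=-\lambda\partial_y^2\partial_x^{-1}w$ and $\partial_x^{-1}$ taken as the Fourier multiplier $(i\xi)^{-1}$ on the $x$-variable. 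On the subspace $\{\hat w(0,k)=0\}$ this multiplier is natural. On the Fourier side of \eqref{KP FIrst equation}, the $x$-frequency-zero modes satisfy $\partial_t(i\xi\hat w)|_{\xi=0}=0$ trivially. The mean constraint therefore has to be viewed as the compatibility condition inherited from \eqref{meanzero}, and I will justify its propagation via the $x$-integrated equation $\int(u_t)\,dx=0$. That identity holds because the differentiated equation \eqref{KP FIrst equation} reads $\partial_x(\cdots)=-\lambda u_{yy}$. The left side integrates in $x$ to $0$, so $\lambda\partial_y^2\int w\,dx=0$; by periodicity in $y$ this gives $\int w\,dx=g(t)$, a constant in $y$. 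I will fix $g$ by the zero-mean initial condition and the $L^2$ framework.

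\textbf{Step 2: local existence by a Galerkin/parabolic method.} The equation for $w$ is
\[
w_t+\bigl((f+w)^2/2\bigr)_x-\varepsilon_1w_{xx}-\varepsilon_2w_{yy}+\delta w_{xxx}+\lambda\partial_y^2\partial_x^{-1}w=F(f),
\]
where $F(f)=\varepsilon_1f''-\delta f'''-ff'$ is smooth and compactly supported. The linear part generates a semigroup on $H^s$. Indeed, its symbol $-\varepsilon_1\xi^2-\varepsilon_2k^2+i(\delta\xi^3+\lambda k^2/\xi)$ has real part $-\varepsilon_1\xi^2-\varepsilon_2k^2$: the dispersive and nonlocal parts are skew-adjoint, hence unitary. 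A contraction mapping in $C([0,T_0];H^1)\cap L^2(0,T_0;H^2)$ then yields a unique local solution, with the nonlinearity controlled by $\|w w_x\|_{L^2}\le\|w\|_{L^\infty}\|w_x\|_{L^2}$ and the 2D Gagliardo–Nirenberg inequality.

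\textbf{Step 3: global a priori bounds and continuation.} The $L^2$ estimate comes from $\frac{d}{dt}\|w\|^2+2\varepsilon_1\|w_x\|^2+2\varepsilon_2\|w_y\|^2$. The skew terms drop out. The cubic term $\int w^2w_x=0$, and the remaining terms $\int fww_x$ and $\int f'w^2$ are bounded by $C\|w\|^2+\tfrac{\varepsilon_1}{2}\|w_x\|^2$. Gronwall then gives $L^2$ control on any finite interval. Next, the $H^1$ estimate: differentiate the equation and use the Ladyzhenskaya inequality $\|w\|_{L^4}^2\le C\|w\|\|\nabla w\|$ to absorb $\int w w_x w_{xx}$-type terms into the dissipation $\varepsilon_1\|\nabla w_x\|^2+\varepsilon_2\|\nabla w_y\|^2$. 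This gives a Gronwall bound with an integrable coefficient $\|\nabla w\|^2$. By a continuation argument, the solution is global.

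\textbf{Main obstacle.} I expect the main difficulty to be making sense of $\partial_x^{-1}$ and the zero-mean constraint rigorously. The multiplier $k^2/\xi$ is singular at $\xi=0$, and one must check that the constraint propagates and keeps $v\in L^2$. I would first try working in the closed subspace $\{\hat w(\cdot,k)\ \text{vanishing at}\ \xi=0\}$. The inhomogeneity $F(f)$ can violate this, since $\int F\,dx=-(u_+^2-u_-^2)/2\ne0$, and I would compensate by replacing $f$ with a shifted profile so that its moment balances.
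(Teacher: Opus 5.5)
The analytic core of your proposal matches the paper's proof: a contraction semigroup for the linear part, a Duhamel fixed point in $C([0,T];H^1)\cap L^2(0,T;H^2)$, the $L^2$ energy identity, an $H^1$ bound via Gagliardo--Nirenberg and Gronwall with the integrable weight $\|\nabla w\|_{L^2}^2$, and continuation.

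\textbf{The gap.} Membership $u\in X_T$ requires $\int_{\mathbb R}(u-f)\,dx=0$ for all $t$, and your Step 1 does not establish it. Two steps there fail.
\begin{itemize}
\item It is not true that all $x$-derivative terms integrate to zero: the flux gives $\int_{\mathbb R}(u^2/2)_x\,dx=(u_+^2-u_-^2)/2$.
\item The differentiated equation yields only $\partial_y^2\int_{\mathbb R}w\,dx=0$, i.e.\ $\int_{\mathbb R}w\,dx=g(t)$. It does not give $\int u_t\,dx=0$.
\end{itemize}
Moreover, $g$ is not something you can fix by hand. Integrate the $w$-equation in $x$ and average over $\mathbb T^1$. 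Every term except the source is an exact $x$- or $y$-derivative (the nonlocal term and $\varepsilon_2 w_{yy}$ carry $\partial_y^2$), so
\[
g'(t)=\int_{\mathbb R}F\,dx=-\int_{\mathbb R}ff'\,dx=\frac{u_-^2-u_+^2}{2}=\sigma(u_--u_+).
\]
Hence $g(t)=\sigma(u_--u_+)\,t$. This is exactly the linear mass growth of the traveling wave $\tilde u(x-\sigma t)$ measured against a fixed $f$. (Your displayed $w$-equation counts $ff'$ twice, once in the flux and once in $F$; this only changes the constant.)

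Your proposed remedy of shifting $f$ cannot work, because $\int_{\mathbb R}F\,dx$ depends only on $u_\pm$, not on the choice or position of $f$. The singular multiplier $k^2/\xi$ is also not where the difficulty lies. It acts only on the modes $k\neq0$, and for those your relation $\partial_y^2\int w\,dx=0$ already gives the constraint. The $y$-independent mode, where the constraint can fail, does not see the nonlocal term at all.

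\textbf{The missing ingredient} is the restriction $u_-+u_+=0$, i.e.\ $\sigma=0$. This is the Galilean normalization $u_\pm=\mp s$ used in the rest of the paper. Under it, $\int_{\mathbb R}F\,dx=0$, and every forcing term of the Duhamel iteration ($ww_x$, $(fw)_x$, $F$) has zero $x$-integral, so the constraint propagates. This is how the paper obtains it: it runs the semigroup and the contraction argument inside the zero-mean class $X$. Its assertion that the right-hand side of the iteration lies in $X$ uses precisely $\int_{\mathbb R}F\,dx=0$.

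For $\sigma\neq0$, no solution can satisfy the constraint relative to a fixed $f$ for $t>0$. One must instead measure the mass against the moving profile $f(x-\sigma t)$. Equivalently, first pass to $\xi=x-\sigma t$ and replace $u$ by $u-\sigma$.
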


\begin{remark}
We note that the zero mean condition
\beq\label{zeromean2}
\int_{\mathbb{R}} (u(t,x,y)-f(x))\,dx = 0
\eeq
is naturally encoded in the KP equation. And the initial zero mean condition \eqref{meanzero} is a consistency condition.

Indeed, integrating \eqref{KP FIrst equation} with respect to \(x\) over \(\mathbb{R}\), we obtain
\[
\partial_{yy}\left(\int_{\mathbb{R}} u(t,x,y)\,dx\right)=0.
\]
Since the solution is periodic in the \(y\)-variable on \(\mathbb{T}^1\),  $\int_{\mathbb{R}} u(t,x,y)\,dx$ is independent of $y$.
At $t=0$, this gives the initial zero mean condition \eqref{meanzero}, given that $\lim_{x\rightarrow \pm \infty}f(x)=u_\pm$. We assume slightly more on $f(x)$ due to some technical reason.
Then, Theorem \ref{gloabal existence} guarantees that the solution always satisfies the zero mean condition \eqref{zeromean2}.
 Consequently, the zero-mean condition is not an additional assumption, but rather an intrinsic property of the KP equation.
 
For Theorem \ref{gloabal existence}, we need to assume that the initial data are in $H^s(\Omega$), i.e. $u^0-f\in H^s(\Omega)$ with any $s>1$, instead of $s=1$, due to the application of Sobolev embedding theorem.
\end{remark}

Now we can state our main stability theorem for both the monotone and non-monotone cases.

\begin{theorem}[Monotone Case] \label{Main theorem for monotone}
    We denote \(u_{-}- u_+ = 2 s>0\). Let \(\varepsilon_1 >0 \) and \(\delta>0\) be constants.
 Assume that $ \frac{\d(u_--u_+)}{2\e_1^2}\leq \frac{1}{4} $, so that the traveling wave solution \(\tilde{u}\) of \eqref{shock equation}  is a monotonically decreasing solution connecting $u_{-}\ to\ u_{+}$.    Then the following inequality holds:
    \begin{equation} \label{main lemma0}
  \lambda_1\ (s - \tilde{u}(\xi))\ (\tilde{u}(\xi) + s) \le -\varepsilon_1 \tilde{u}'(\xi), \qquad \forall \xi \in \mathbb{R} 
\end{equation}
for any   \(\lambda_1 < \sqrt{2}-1.
    \)
    
 Let \(u^{0}\) be given initial data satisfying \(u^{0}\in  H^s(\Omega)\) for $s>1$, and let \(u\in X_T\) be a solution to equation \eqref{KP main equation} with initial data $u^{0}$ satisfying \eqref{ID1} and \eqref{meanzero} . If \beq\label{mono_ass} s < 4 \pi \sqrt{\e_1\e_2}, \qquad \eeq then, for any $\frac{1}{4}<\lambda_1<\sqrt{2}-1$, there exists a Lipschitz shift \(X(t)\) such that
\begin{equation}
\label{theorem1}
\begin{aligned}
\int_{\Omega} \big| u(t,x,y) - \tilde{u}(x - \sigma t - X(t)) \big|^2 \, dx\,dy 
+ C_{*}\varepsilon_1 \int_0^T \int_{\Omega} \left (\left( u(t,x,y)-\tilde{u}(x - \sigma t - X(t)) \right)_x \right)^2 \, dx\,dy \,dt \\
+ C^{*}  \int_0^T\int_{\Omega} \left (\left(u(t,x,y)- \tilde{u}(x - \sigma t - X(t)) \right)_y \right)^2 \, dx\,dy \, dt
\leq \int_{\Omega} |u^0 - \tilde{u}|^2 \, dx\,dy,
\end{aligned}
\end{equation}
where \(C_*= 2-\frac{1}{2\lambda_1}\), \(C^*=2\varepsilon_2-\frac{s^2}{8\pi^2\varepsilon_1} \),  and the Lipschitz shift satisfies the ODE,
\begin{equation} 
    \label{lipstich}
\begin{cases}
{\dot{X} = -\frac{M}{2s}\int_{\Omega}\Big( u(t,x+ X(t),y) - \tilde{u} (x - \sigma t )\Big)\tilde{u}'(x - \sigma t)\, d x\, d y,}\\ {X(0) = 0,}
\end{cases} 
\end{equation}  
with $M=\frac{1}{2}$. Moreover, for $2<p<\infty$, the following holds:
\begin{equation} \label{tstabilitymonotone}
\int_{\Omega}
\left|u(t,x,y)-\tilde{u}(x-\sigma t-X(t))\right|^p \, dx\,dy 
\to 0 \quad \text{as } t \to \infty,
\end{equation}
which establishes time-asymptotic stability.
Furthermore, the shift function satisfies the following time asymptotic behavior
\begin{equation} \label{shiftasy}
|\dot{X}(t)| \to 0 \quad \text{as } t \to \infty, \quad \text{and} \quad \frac{X(t)}{t} \to 0 \quad \text{as } t\to\infty. 
\end{equation} 
\end{theorem}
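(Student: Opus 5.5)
The plan is a relative entropy ($L^2$) estimate with a shift, following the 1D scheme of \cite{chen2026mono}, plus control of the transverse terms. I would first prove the profile inequality \eqref{main lemma0}. Integrating \eqref{shock equation} once gives $\varepsilon_1\tilde u'-\delta\tilde u''=\frac12(\tilde u-u_-)(\tilde u-u_+)$. Shifting so that $\sigma=0$, this reads $\varepsilon_1\tilde u'-\delta\tilde u''=\frac12(\tilde u^2-s^2)$. In the monotone regime $\tilde u'<0$, so one can treat $p=-\tilde u'$ as a function of $\tilde u\in(-s,s)$ in the phase plane. Then $\delta \tilde u''=\delta p\,p_{\tilde u}$, and the ODE becomes $\varepsilon_1 p-\delta p\,p_{\tilde u}=\frac12(s^2-\tilde u^2)$. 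The goal is the comparison $p\ge \frac{\lambda_1}{\varepsilon_1}(s^2-\tilde u^2)$. I would compare $p$ with the parabola $q=\frac{\lambda_1}{\varepsilon_1}(s^2-\tilde u^2)$ and show that $q$ is a subsolution of the phase ODE whenever $\frac{\delta\cdot 2s}{2\varepsilon_1^2}\le\frac14$ and $\lambda_1<\sqrt2-1$. The constant $\sqrt2-1$ should come out as the root of a quadratic in $\lambda_1$, of the form $\lambda_1^2+2\lambda_1-1=0$. I would then conclude by a maximum-principle/crossing argument starting from the endpoint $\tilde u=s$, where both curves vanish and the slopes can be compared.

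Next comes the energy estimate. Set $w=u(t,x+X(t),y)-\tilde u(x-\sigma t)$ and compute $\frac{d}{dt}\frac12\int_\Omega w^2$ using \eqref{KP main equation}. The dispersive term $\delta w_{xxx}w$ integrates to zero. The dissipation produces $-\varepsilon_1\int w_x^2-\varepsilon_2\int w_y^2$. The nonlocal term is $\int vw$ with $v_x=-\lambda u_{yy}=-\lambda w_{yy}$, since $\tilde u$ is planar. By the zero-mean property \eqref{zeromean2}, $w=\partial_x W$ with $W=\int_{-\infty}^x w\to0$ at both ends, so $v=-\lambda W_{yy}$. Then $\int vw=-\lambda\int W_{yy}W_x=\frac{\lambda}{2}\int\partial_x(W_y^2)=0$, so the KP term is harmless for both signs of $\lambda$. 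The nonlinear term gives the usual 1D contribution $-\frac12\int\tilde u' w^2$ together with cubic pieces that vanish after integration by parts. The shift adds $\dot X\int w\,\tilde u'$ plus lower-order terms.

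Choosing $\dot X$ as in \eqref{lipstich} makes the shift term $-\frac{M}{2s}\big(\int w\tilde u'\big)^2$. The bad term $-\frac12\int\tilde u'w^2$ has to be absorbed, and for this I would split $w=\bar w(x)+\hat w$ with $\bar w=\int_{\mathbb T^1}w\,dy$. For the mean part $\bar w$, the 1D weighted Poincaré inequality of \cite{chen2026mono} applies: change variables to $z=(\tilde u+s)/(2s)\in(0,1)$ and use \eqref{main lemma0} to bound the Jacobian, i.e. $\varepsilon_1\int \bar w_x^2\ge\frac{\lambda_1 s^2\cdot 4}{\varepsilon_1}\int z(1-z)|\partial_z\bar w|^2$. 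This, combined with the shift term and $M=\frac12$, absorbs $-\frac12\int\tilde u'\bar w^2$ at the cost of a fraction $\frac1{2\lambda_1}$ of $\varepsilon_1\int w_x^2$. That is where $C_*=2-\frac1{2\lambda_1}$ and the constraint $\lambda_1>\frac14$ come from. For the oscillating part, $\int\tilde u'\hat w^2$ has no mean constraint, so I would bound it by $\|\tilde u'\|_{L^1}\sup_x\int_{\mathbb T^1}\hat w^2\,dy=2s\sup_x\int\hat w^2dy$. The sup in $x$ is then controlled by $\int\hat w^2\le\frac1{4\pi^2}\int\hat w_y^2$ together with the 1D Agmon-type bound $\sup_x f^2\le \|f\|\|f_x\|$, and Young's inequality splits the result between $\varepsilon_1\int w_x^2$ and $\int w_y^2$. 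This yields the loss $\frac{s^2}{8\pi^2\varepsilon_1}$ in $C^*$, which is positive exactly under \eqref{mono_ass}. The mixed terms between $\bar w$ and $\hat w$ vanish because $\tilde u'$ is independent of $y$.

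This $\bar w/\hat w$ splitting with sharp constants is the step I expect to be the main obstacle. After it, integrating in time gives \eqref{theorem1}. For the asymptotics, \eqref{theorem1} gives $w_x,w_y\in L^2_{t,x,y}$ and $\|w\|_{L^2}$ bounded. Using the equation, $\frac{d}{dt}\|\nabla w\|^2$ is bounded in time, so $\|\nabla w(t)\|\to0$. Gagliardo–Nirenberg interpolation on $\Omega$ then yields \eqref{tstabilitymonotone}. Finally, $|\dot X|\le C\|w\|_{L^\infty_y L^2_x}$-type bounds on $\int w\tilde u'$ are interpolated to decay as well, which gives \eqref{shiftasy}.
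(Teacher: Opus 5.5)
\textbf{Gap: the oscillating part cannot give the stated constants.} The energy identity, including the vanishing of the KP term $\int vw$, and your treatment of the $y$-average $\bar w$ are fine. The estimate you propose for the oscillating part $\hat w$, however, cannot produce the constants in \eqref{theorem1}, even though you assert it does.

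Your chain
$\frac12\int_\Omega|\tilde u'|\hat w^2\le s\sup_x\int_{\mathbb T^1}\hat w^2\,dy\le s\|\hat w\|\|\hat w_x\|\le\frac{s}{2\pi}\|\hat w_x\|\|\hat w_y\|$
bounds the bad term by a \emph{product} of the two dissipations.

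\begin{itemize}
\item Keeping $C_*=2-\frac1{2\lambda_1}$ lets you spend at most $\frac{\varepsilon_1}{4\lambda_1}\|\hat w_x\|^2$.
\item Keeping $C^*=2\varepsilon_2-\frac{s^2}{8\pi^2\varepsilon_1}$ lets you spend at most $\frac{s^2}{16\pi^2\varepsilon_1}\|\hat w_y\|^2$.
\item But $\frac{s}{2\pi}ab\le\frac{\varepsilon_1}{4\lambda_1}a^2+\frac{s^2}{16\pi^2\varepsilon_1}b^2$ holds for all $a,b\ge0$ only if $\lambda_1\le\frac14$, which is excluded.
\end{itemize}

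Concretely, the loss $\frac{s^2}{8\pi^2\varepsilon_1}$ in $C^*$ that you claim forces Young's inequality to consume all of $\varepsilon_1\|\hat w_x\|^2$. Nothing of the $x$-dissipation of $\hat w$ is then left for the $C_*\varepsilon_1\int_0^T\int_\Omega w_x^2$ term. If you instead keep $C_*$, the transverse loss becomes $\frac{\lambda_1 s^2}{4\pi^2\varepsilon_1}>\frac{s^2}{16\pi^2\varepsilon_1}$.

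Your route does give a contraction with \emph{some} positive constants under the same hypothesis \eqref{mono_ass}, since $\frac{s}{2\pi}<2\sqrt{\varepsilon_1\varepsilon_2}$ holds exactly then. It does not give \eqref{theorem1} as stated.

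\textbf{The missing idea.} Apply Lemma~\ref{KV} fiberwise, for each fixed $y$, to the whole $w(\cdot,y)$ (or to $\hat w(\cdot,y)$), instead of using the $L^1$--$L^\infty$ and Agmon bound. Set $z=s-\tilde u$, $W(z,y)=w(\xi(z),y)$ and $A(y)=\int_{\mathbb R}w\,\tilde u'\,d\xi$. Lemma~\ref{KV} in $z$, followed by \eqref{main lemma0}, gives
\[
\frac12\int_{\mathbb R}|\tilde u'|\,w^2\,d\xi=\frac12\int_0^{2s}W^2\,dz\le\frac{1}{4s}A(y)^2+\frac14\int_0^{2s}z(2s-z)W_z^2\,dz\le\frac{1}{4s}A(y)^2+\frac{\varepsilon_1}{4\lambda_1}\int_{\mathbb R}w_\xi^2\,d\xi .
\]
The shift supplies only $-\frac{1}{4s}\big(\int_{\mathbb T^1}A\,dy\big)^2$. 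After integrating in $y$, the sole leftover is therefore the $y$-variance of $A$. Poincar\'e on $\mathbb T^1$ and Cauchy--Schwarz bound it by
$\frac{1}{16\pi^2 s}\|\tilde u'\|_{L^2}^2\|w_y\|_{L^2(\Omega)}^2\le\frac{s^2}{16\pi^2\varepsilon_1}\|w_y\|_{L^2(\Omega)}^2$,
using $\|\tilde u'\|_{L^2}^2\le\sup|\tilde u'|\,\|\tilde u'\|_{L^1}\le s^3/\varepsilon_1$.

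The $x$-dissipation is thus charged only the fraction $\frac{1}{4\lambda_1}$, and the transverse correction is charged to $\varepsilon_2\|w_y\|^2$ alone. This gives exactly $C_*$ and $C^*$, and it is the paper's argument.

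\textbf{A smaller slip in the profile inequality.} With $\tilde u'=-p$ and $\tilde u''=p\,p_{\tilde u}$, the phase-plane equation is $\varepsilon_1 p+\delta p\,p_{\tilde u}=\frac12(s^2-\tilde u^2)$, not $\varepsilon_1p-\delta p\,p_{\tilde u}=\frac12(s^2-\tilde u^2)$. With the correct sign:
\begin{itemize}
\item $q$ is a strict subsolution once $\lambda_1+\frac{\lambda_1^2}{2}<\frac12$. The worst case is $\tilde u=-s$, using $4\delta s\le\varepsilon_1^2$.
\item Hence $(p-q)_{\tilde u}>0$ at every touching point, and the crossing argument must be anchored at $\tilde u=-s$.
\item There, $p_{\tilde u}(-s)=\frac{\sqrt{\varepsilon_1^2+4\delta s}-\varepsilon_1}{2\delta}>\frac{2\lambda_1 s}{\varepsilon_1}=q_{\tilde u}(-s)$, which again uses $\lambda_1<\sqrt2-1$.
\end{itemize}
Anchoring only at $\tilde u=s$, as you propose, gives no contradiction.
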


\begin{theorem}[Exponential decay for monotone traveling wave]\label{exponential decay of monotne}
 For any \(\gamma>\frac{1}{2}\), assume \( \e_1 >\sqrt{\frac{4\gamma^2\delta s}{2\gamma-1}},\)  where we denote \(u_{- } - u_+ = 2 s>0\). 
In this case, the condition $ \frac{\d(u_--u_+)}{2\e_1^2}\leq \frac{1}{4} $ is always satisfied, so $\tilde{u}$ is monotonically decreasing.    
In addition to \eqref{main lemma0}, we also have the following inequality
    \begin{equation} \label{main lemma2}
-\e_1\tilde{u}'(\xi) \le \gamma\ (s - \tilde{u}(\xi))\ (\tilde{u}(\xi) + s), \qquad \forall \xi \in \mathbb{R}. 
\end{equation}
Moreover, let \(\util(0)=\frac{u_-+u_+}{2}\).
Then,
\[
\frac{(u_--u_+)}{2} e^{-\frac{\gamma(u_--u_+)\abs{x}}{\e_1}}
\le u_--\util(x)
\le \frac{(u_--u_+)}{2} e^{-\frac{\lambda_1(u_--u_+)\abs{x}}{2\e_1}}, \qquad \forall x\le0,
\]
\[
\frac{(u_--u_+)}{2} e^{-\frac{\gamma(u_--u_+)\abs{x}}{\e_1}}
\le \util(x)-u_+
\le \frac{(u_--u_+)}{2} e^{-\frac{\lambda_1(u_--u_+)\abs{x}}{2\e_1}}, \qquad \forall x\ge0,
\]
\[
\frac{\lambda_1(u_--u_+)^2}{4\e_1} e^{-\frac{\gamma(u_--u_+)\abs{x}}{\e_1}}
\le -\util'(x)
\le \frac{\gamma(u_--u_+)^2}{2\e_1} e^{-\frac{\lambda_1(u_--u_+)\abs{x}}{2\e_1}}, \qquad \forall x\in\RR,
\]
where \(\lambda_1=\sqrt{2}-1\).
\end{theorem}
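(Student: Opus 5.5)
We will prove \eqref{main lemma2} by a phase-plane barrier argument, and then obtain the exponential bounds by comparing $\tilde u$ with explicit $\tanh$ profiles built from \eqref{main lemma0} and \eqref{main lemma2}.

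\textbf{Reductions.} By Galilean invariance we may assume $\sigma=0$, i.e. $u_\pm=\pm s$; this is the normalization under which \eqref{main lemma0} and \eqref{main lemma2} are written. Integrating \eqref{shock equation} once and using the end states gives
\[
\delta\tilde u''-\varepsilon_1\tilde u'=\tfrac12(s^2-\tilde u^2).
\]
First note that $\varepsilon_1^2>\frac{4\gamma^2\delta s}{2\gamma-1}\ge 4\delta s$, because $\gamma^2\ge 2\gamma-1$. Hence $\frac{\delta(u_--u_+)}{2\varepsilon_1^2}\le\frac14$, so we are in the monotone regime of Theorem \ref{Main theorem for monotone}, and $\tilde u$ is strictly decreasing from $s$ to $-s$.

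\textbf{Phase-plane equation.} Since $\tilde u$ is strictly decreasing, we may write $p:=-\tilde u'$ as a function of $w=\tilde u\in(-s,s)$. Then $\tilde u''=p\,p_w$, and the equation becomes
\[
\delta\, p\, p_w=\tfrac12(s^2-w^2)-\varepsilon_1 p,\qquad p(\pm s)=0 .
\]
The barrier is $q(w):=\gamma(s^2-w^2)/\varepsilon_1$, and the goal is to show $p\le q$ on $(-s,s)$.

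\textbf{Start of the comparison at $w=-s$.} We work from the end $w=-s$, which corresponds to $x=+\infty$ and is a node. Linearizing with $p\approx\nu(w+s)$ gives $\delta\nu^2+\varepsilon_1\nu-s=0$, so
\[
\nu=\frac{2s}{\varepsilon_1+\sqrt{\varepsilon_1^2+4\delta s}}<\frac{s}{\varepsilon_1}<\frac{2\gamma s}{\varepsilon_1}=q_w(-s),
\]
using $\gamma>\tfrac12$. Hence $p<q$ on some interval $(-s,-s+\eta)$.

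\textbf{No first touching point.} Suppose $w_0\in(-s,s)$ is the first point where $p=q$. Then $(p-q)_w(w_0)\ge0$. On the other hand, at $w_0$ the equation gives
\[
p_w=\frac{(\tfrac12-\gamma)(s^2-w_0^2)}{\delta p}=\frac{(\tfrac12-\gamma)\varepsilon_1}{\delta\gamma},
\qquad q_w=-\frac{2\gamma w_0}{\varepsilon_1}.
\]
The inequality $p_w<q_w$ is equivalent to $\frac{(\gamma-\frac12)\varepsilon_1^2}{\delta\gamma}>2\gamma w_0$. This holds for every $w_0<s$ precisely because $\varepsilon_1^2(2\gamma-1)>4\gamma^2\delta s$. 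So $(p-q)_w(w_0)<0$, a contradiction, and therefore $p\le q$ on $(-s,s)$, which is \eqref{main lemma2}.

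\textbf{Main obstacle.} The delicate point is choosing the correct endpoint for the barrier. Started from $w=s$ (the $x=-\infty$ end), the derivative inequality at a touching point has the wrong sign. Started from $w=-s$, both the initial slope comparison and the touching-point contradiction go through, and it is exactly there that the hypothesis on $\varepsilon_1$ enters.

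\textbf{Exponential bounds.} Fix $\tilde u(0)=0$. By \eqref{main lemma0} and \eqref{main lemma2},
\[
\frac{\lambda_1}{\varepsilon_1}(s^2-\tilde u^2)\le-\tilde u'\le\frac{\gamma}{\varepsilon_1}(s^2-\tilde u^2).
\]
ODE comparison with the logistic solutions $-s\tanh(ksx/\varepsilon_1)$, $k\in\{\lambda_1,\gamma\}$, then gives, for $x\ge0$,
\[
s\bigl(1-\tanh(\gamma s x/\varepsilon_1)\bigr)\le \tilde u(x)+s\le s\bigl(1-\tanh(\lambda_1 s x/\varepsilon_1)\bigr),
\]
and the symmetric statement for $x\le0$.

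For the elementary bounds, write $z\ge0$ for the argument. We have $1-\tanh z=\frac{2e^{-2z}}{1+e^{-2z}}\ge e^{-2z}$. Also $1-\tanh z\le e^{-z}$, which is equivalent to $(1-e^{-z})^2\ge0$. Since $2s=u_--u_+$, these yield the stated two-sided bounds on $u_--\tilde u$ and $\tilde u-u_+$; the upper bound has rate $\lambda_1(u_--u_+)/(2\varepsilon_1)$, as in the statement.

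For the derivative, on $x\le0$ we have $s\le s+\tilde u\le 2s$, so $s(s-\tilde u)\le s^2-\tilde u^2\le 2s(s-\tilde u)$, and symmetrically for $x\ge0$. Inserting the previous bounds gives
\[
\frac{\lambda_1(u_--u_+)^2}{4\varepsilon_1}e^{-\gamma(u_--u_+)|x|/\varepsilon_1}\le-\tilde u'\le\frac{\gamma(u_--u_+)^2}{2\varepsilon_1}e^{-\lambda_1(u_--u_+)|x|/(2\varepsilon_1)},
\]
which completes the proof.
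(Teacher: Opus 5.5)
Your proof is correct, and for \eqref{main lemma2} it uses the same phase-plane barrier as the paper, with a different way of closing the contradiction. The paper's $h=\tilde u'$ is your $-p$, and its barrier $p(a)$ is your $-q$. The key computation is also the same: at any touching point, $(q-p)_w$ equals $g(w)=\frac{1}{\delta}\bigl(\varepsilon_1-\frac{\varepsilon_1}{2\gamma}\bigr)-\frac{2\gamma w}{\varepsilon_1}$.

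The paper compares slopes at both endpoints $\pm s$. It uses the hypothesis on $\varepsilon_1$ for the comparison at the node $w=s$, and then needs two crossing points $b<c$ together with the strict monotonicity of $g$. You compare slopes only at $w=-s$, which needs just $\gamma>\frac12$. You use the hypothesis to show $g>0$ on all of $(-s,s)$, i.e.\ $g(s)>0$, so a first crossing cannot occur.

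Your route never needs the slope of the orbit at $(s,0)$, where two eigendirections are available. This matters because the paper's stated reason for picking the smaller root of $\delta X^2-\varepsilon_1X+s=0$, namely $h'<\varepsilon_1/\delta$, does not by itself exclude the larger root: both roots lie in $(0,\varepsilon_1/\delta)$.

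The paper omits the proof of the exponential bounds and defers to \cite{chen2026mono}. Your comparison with the $\tanh$ profiles, using $1-\tanh z\ge e^{-2z}$ and $1-\tanh z\le e^{-z}$, reproduces exactly the stated constants.

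Two small corrections:
\begin{itemize}
\item The equilibrium $(\tilde u,\tilde u')=(-s,0)$ is a saddle, not a node, since the roots of $\delta\mu^2-\varepsilon_1\mu-s=0$ have opposite signs. This is exactly why your slope $\nu$ there is uniquely determined.
\item Since \eqref{main lemma0} is stated only for $\lambda_1<\sqrt2-1$, you should note that it persists at $\lambda_1=\sqrt2-1$ by letting $\lambda_1\uparrow\sqrt2-1$.
\end{itemize}
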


The proof of Theorem \ref{exponential decay of monotne} follows arguments very similar to those used in the proof of Theorem 1.4 in \cite{chen2026mono} and is therefore omitted for brevity.

For the non-monotone case, we obtain the $L^2$-contraction when 
\begin{equation} \label{cond1}
\frac{1}{4} < \frac{\d(u_--u_+)}{2\e_1^2} < \frac{1}{2}.
\end{equation}
Without loss of generality, for this case, we always assume $\e_1=1$. 

\begin{remark}
    Due to the Galilean invariant transformation, without loss of generality, we only consider the case when $u_-=-u_+=s$, so $\sigma=0$. See \cite{chen2026uniform}.
\end{remark}

\begin{theorem}[Non-monotone Case] \label{main theorem for non monotone}
       Let \(\e_1=1\) and \(\d\) be a positive constant, such that
\begin{equation} \label{cond1_2}
\frac{1}{4} < \d s < \frac{1}{2},
\end{equation}
and where \(u_\pm=\mp s\) be the far-field states with $s>0$.

 Let $u^0$ be given initial data satisfying  \(u^{0}\in  H^s(\Omega)\) for $s>1$, and let \(u\in X_T\) be a solution to equation \eqref{KP main equation} with initial condition $u^{0}$ satisfying \eqref{ID1} and \eqref{meanzero}. Then, for \beq\label{s_def} s < \sqrt{\frac{7.86\ \e_2}{M}} \pi ,\qquad\hbox{with}\qquad M = \frac{4}{3},\eeq
 the following $L^2$-contraction holds:
\begin{equation}\label{theorem2}
\begin{aligned}
\int_{\Omega} \big| u(t,x,y) - \tilde{u}(x - \sigma t - X(t)) \big|^2 \, dx\,dy 
+ \frac{1}{5} \int_0^T \int_{\Omega}\left(\left( u(t,x,y)-\tilde{u}(x - \sigma t - X(t)) \right)_x \right)^2 \, dx\,dy \,dt \\
+ C^{**}  \int_0^T\int_{\Omega} \left (\left( u(t,x,y)-\tilde{u}(x - \sigma t - X(t))\right)_y \right)^2 \, dx\,dy \, dt
\leq \int_{\Omega} |u^0 - \tilde{u}|^2 \, dx\,dy, \quad 
\end{aligned}
\end{equation}
where \(C^{**}=2\e_2-\frac{2.034 }{8\pi^2}  Ms^2  \geq 0\), and the shift function $X(t)$ satisfies $X(0) = 0$,
\begin{equation}\label{shift non monotone}
\dot{X}(t) = -\frac{M}{2s} \int_{\Omega} \left( u(t, x + X(t),y) - \tilde{u}(x - \sigma t) \right) \tilde{u}'(x - \sigma t)\, dx\, dy.
\end{equation}
Moreover, for $2<p<\infty$, the following holds:
\begin{equation} \label{tstabiltynonmonotone}
\int_{\Omega}
\left|u(t,x,y)-\tilde{u}(x-\sigma t-X(t))\right|^p \, dx\,dy 
\to 0 \quad \text{as } t \to \infty,
\end{equation}
which shows the time-asymptotic stability. Furthermore, the following time asymptotic behavior of the shift function holds:
\begin{equation} \label{shiftasy2}
|\dot{X}(t)| \to 0 \quad \text{as } t \to \infty ,\quad and \quad \frac{X(t)}{t} \to 0 \quad \text{as } t\to\infty.
\end{equation} 
\end{theorem}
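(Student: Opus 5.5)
We follow the strategy of \cite{chen2026uniform} for the one-dimensional KdV--Burgers equation and add estimates for the transverse directions. Set $w(t,x,y):=u(t,x+X(t),y)-\tilde u(x)$ in the moving frame; here $\sigma=0$, $\varepsilon_1=1$ and $u_\pm=\mp s$. Using the profile equation \eqref{shock equation} and the system \eqref{KP main equation}, $w$ satisfies
\[
w_t-\dot X(\tilde u'+w_x)+\big(\tfrac{w^2}{2}+\tilde u w\big)_x-w_{xx}-\varepsilon_2w_{yy}+\delta w_{xxx}=v,\qquad v_x=-\lambda w_{yy}.
\]
Note that $\tilde u_{yy}=0$, so $v$ depends only on $w$.

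\textbf{Step 1: energy identity.} Multiply by $w$ and integrate over $\Omega$. The dispersive term vanishes by integration by parts, and the cubic term $(w^2/2)_xw$ integrates to zero. The transport term $\dot X\int w_xw$ also vanishes. The KP term vanishes as well: by the zero-mean condition \eqref{zeromean2} we can write $v=-\lambda\partial_x^{-1}w_{yy}$ with $\partial_x^{-1}$ well defined, and then
\[
\int vw=-\lambda\int \partial_x^{-1}w_{yy}\,\partial_x(\partial_x^{-1}w)=\lambda\int \partial_x^{-1}w_y\,\partial_x\partial_x^{-1}w_y=0 .
\]
This step needs the regularity of $X_T$ (via Theorem \ref{gloabal existence}) to justify the manipulations. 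We obtain
\[
\frac12\frac{d}{dt}\|w\|^2=\dot X\!\int\tilde u'w-\frac12\int\tilde u'w^2-\|w_x\|^2-\varepsilon_2\|w_y\|^2 .
\]

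\textbf{Step 2: split into the $y$-average and the fluctuation.} Write $w=\bar w(x)+\tilde w$, where $\bar w$ is the average of $w$ over $y\in\mathbb T^1$, so $\int_{\mathbb T^1}\tilde w\,dy=0$. The terms $\int\tilde u'w$ and $\int\tilde u'w^2$ decouple orthogonally: the first only sees $\bar w$, and the second splits as $\int\tilde u'\bar w^2+\int\tilde u'\tilde w^2$. Choose $\dot X$ as in \eqref{shift non monotone}. The averaged part $\bar w$ then reproduces exactly the one-dimensional functional of \cite{chen2026uniform}: the shift term gives $-\frac{M}{2s}\big(\int\tilde u'\bar w\big)^2$, and we need the bad term $-\frac12\int\tilde u'\bar w^2$ together with $-\|\bar w_x\|^2$ to be controlled. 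Here we invoke the sharp weighted Poincaré-type inequality for the oscillatory profile from \cite{chen2026uniform} (the estimates recalled in Section 2), valid under \eqref{cond1_2} with $M=4/3$. It yields
\[
\dot X\!\int\tilde u'\bar w-\tfrac12\int\tilde u'\bar w^2-\tfrac45\|\bar w_x\|^2\le 0 .
\]
This accounts for the coefficient $\frac15$ in \eqref{theorem2}.

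\textbf{Step 3: the fluctuation, which I expect to be the main obstacle.} For $\tilde w$ we use the Poincaré inequality on the torus, $\|\tilde w_y\|^2\ge 4\pi^2\|\tilde w\|^2$, together with $|\tilde u'|_\infty$. The bad term is then bounded by
\[
\tfrac12|\tilde u'|_\infty\|\tilde w\|^2\le \frac{|\tilde u'|_\infty}{8\pi^2}\|w_y\|^2 .
\]
So the key quantitative input is a sharp sup bound on the oscillatory profile, roughly $|\tilde u'|_\infty\le 2.034\,Ms^2/2$ in the normalization of $C^{**}$, obtained from the phase-plane/Lyapunov analysis of \eqref{shock equation} under \eqref{cond1_2}. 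Alternatively, one can use the weighted 1D inequality with leftover $x$-dissipation. Combining this bound with $-\varepsilon_2\|w_y\|^2$ gives the $C^{**}$ term, which is nonnegative by \eqref{s_def}. Integrating in time then gives \eqref{theorem2}.

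\textbf{Step 4: asymptotics.} The contraction gives $\|w_x\|,\|w_y\|\in L^2(0,\infty)$ and $\|w\|$ bounded. From the equation, $\frac{d}{dt}\big(\|w_x\|^2+\|w_y\|^2\big)$ is controlled, using the Lipschitz bound on $X$ and Gagliardo--Nirenberg. Hence $\|\nabla w\|\to0$. Interpolation on $\Omega$ between $L^2$ and $H^1$ then gives \eqref{tstabiltynonmonotone} for $2<p<\infty$. Finally, $|\dot X|\le C\big|\int\tilde u' w\big|\le C\|w\|_{L^p}\|\tilde u'\|_{L^{p'}}\to0$, which implies $X(t)/t\to0$ and proves \eqref{shiftasy2}.
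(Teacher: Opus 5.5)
Your proof is correct. Step~1 is the paper's Lemma~\ref{d_dtlemma}, but Steps~2--3 take a genuinely different route.

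\textbf{How the two routes differ.} The paper does not split off the $y$-average. It applies the one-dimensional inequality of Theorem~\ref{induction step thm} fiberwise, to each slice $w(\cdot,y)$. This yields $-\frac{M}{2s}\int_{\mathbb{T}^1}g(y)^2\,dy$ with $g(y)=\int_{\mathbb{R}}w\,\tilde{u}'\,d\xi$, whereas the shift actually supplies only $-\frac{M}{2s}\big(\int_{\mathbb{T}^1}g\,dy\big)^2$. The paper pays for the difference, which is the variance of $g$, by Poincar\'e on $\mathbb{T}^1$ and Cauchy--Schwarz: $|g_y|^2\le\|w_y(\cdot,y)\|_{L^2(\mathbb{R})}^2\sup|\tilde{u}'|\int_{\mathbb{R}}|\tilde{u}'|$. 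This is where both $\sup|\tilde{u}'|\le s^2/2$ and the total-variation bound $\int|\tilde{u}'|\le 2.034s$ enter, at cost $\frac{2.034Ms^2}{16\pi^2}\|w_y\|^2$.

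Your splitting $w=\bar w+\tilde w$ instead exploits that the shift only sees $\bar w$. You use the 1D inequality once, on $\bar w$, and pay only for the bad fluctuation term $-\frac12\int\tilde{u}'\tilde w^2$, at cost $\frac{\sup|\tilde u'|}{8\pi^2}\|w_y\|^2\le\frac{s^2}{16\pi^2}\|w_y\|^2$.

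\textbf{What each route buys.} Your route avoids the total-variation estimate and does not deteriorate as $M$ grows. After doubling it gives the coefficient $2\varepsilon_2-\frac{s^2}{8\pi^2}$, which exceeds $C^{**}=2\varepsilon_2-\frac{2.712\,s^2}{8\pi^2}$. So you obtain \eqref{theorem2} under the weaker requirement $s\le 4\pi\sqrt{\varepsilon_2}$ instead of \eqref{s_def}. The paper's fiberwise route gives the 1D inequality on every slice, a stronger intermediate statement, but nothing downstream needs it.

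\textbf{Three small corrections.}

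First, the sup bound you need requires no phase-plane analysis. At an extremum of $\tilde u'$ one has $\tilde u''=0$, so the integrated profile equation gives $\tilde u'=\frac12(\tilde u^2-s^2)$. Since $-s\le\tilde u\le u_0\le 1.0601s$, this gives $\sup|\tilde u'|\le s^2/2$. That is far below the threshold $\frac{2.034Ms^2}{2}$ you reverse-engineered from $C^{**}$.

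Second, the 1D inequality is not established with only $\frac45$ of the $x$-dissipation. On $(\xi_0,\xi^1)$ the appendix uses $0.83+0.06=0.89$ of it, so only $\frac{1}{10}$ is guaranteed to be left over. The leftover $\frac1{10}\|\bar w_x\|^2+\|\tilde w_x\|^2\ge\frac1{10}\|w_x\|^2$ in the $\frac12\frac{d}{dt}$ identity becomes the $\frac15$ in \eqref{theorem2} only after multiplying by $2$.

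Third, in Step~4, interpolation between $L^2$ and $H^1$ on $\Omega$ gives no decay. The reason is that $\|w\|_{L^2}$ does not decay, and the scale-invariant Gagliardo--Nirenberg inequality fails on $\mathbb{R}\times\mathbb{T}^1$ because of the $y$-independent mode. Use your splitting again: $\|\tilde w\|_{L^p}\lesssim\|\nabla\tilde w\|$ by Poincar\'e in $y$ plus Sobolev, and $\|\bar w\|_{L^p(\mathbb{R})}\lesssim\|\bar w\|^{\frac12+\frac1p}\|\bar w_x\|^{\frac12-\frac1p}$.
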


Note the condition in \eqref{s_def} involves $\e_1$ if we do not assume $\e_1=1$.

\begin{remark}
(1)  Since \( u \in X_T \), the existence, uniqueness, and Lipschitz continuity of the shift function \( X(t) \) in Theorems~\ref{Main theorem for monotone} and~\ref{main theorem for non monotone} can be both established using Lemma~C.1 in~\cite{KV-JDE22}. \\
(2)  Note that \begin{equation} 
|\dot{X}(t)| \to 0 \quad \text{as } t \to \infty \quad \implies \quad \frac{X(t)}{t} \to 0 \quad \text{as } t\to\infty.
\end{equation} 
Specifically, the shift increases at most sublinearly over time, and the shifted shock wave asymptotically preserves the shape of the original traveling wave profile.

\end{remark}

\begin{remark} \label{remark2}
Physically, to derive the KP equation, we need to assume that $0 < \varepsilon_2 \ll \varepsilon_1$. 
This scaling captures environments where the dominant dissipative mechanisms—such as fluid viscosity in a narrow channel or collisional damping in a directional plasma flow—act primarily along the main axis of wave propagation ($x$), while transverse diffusive effects ($y$) are orders of magnitude weaker. 

However, from a strictly functional analytic perspective, taking the singular limit $\varepsilon_2 \to 0$ falls outside the current proof structure. A strictly positive transverse dissipation ($\varepsilon_2 > 0$) is indispensable in our method, where the reader can find the condition on $\varepsilon_2$ in the theorems, for example, \eqref{s_def} for the non-monotonic case. More precisely, by the scaling, \eqref{s_def} shows that the shock strength $2s\leq O(\sqrt{\varepsilon_2})$ when $\varepsilon_1$ is set to be $1$.  This condition supplies the necessary parabolic regularization in the $y$-direction to control transverse gradients. In this sense, our $L^2$-contraction and stability result holds when the travelling wave includes an oscillating small shock.

On the other hand, the ZKB equation has a uniform dissipation constant $\varepsilon$ in all directions. Our results on this equation, which are similar to those for the KP equation, can be found in Section 7.
\end{remark}

This paper is organized into seven sections. In Section 2, we review the properties of planar traveling shock wave in \cite{chen2026uniform}. Sections 3-6 focus on the KP equation.
In Section 3, we give some preliminary estimates that will be used later. In Section 4, we prove the main theorems for the monotone case. In Section 5, we prove the main theorem for the non-monotone case. The global existence theorem is proved in Section 6. We give the results for ZKB equation in Section 7.

\section{A review of the planar traveling wave}
\setcounter{equation}{0}
In this section, we review the properties of the planar traveling wave $\tilde{u}$ for \eqref{shock equation}, especially the oscillatory case, given in \cite{chen2026uniform}. Recall that $\tilde{u}$ is also a solution of the KdV--Burgers equation.

We integrate the traveling wave equation \eqref{shock equation} over \((\x,\pm\infty)\) to find that
\begin{equation} \label{t00}
-\s (\util-u_\pm) + \frac{1}{2}\util^2-\frac{1}{2}u_\pm^2 =\e_1\util' -\d\util''.
\end{equation}
The qualitative behavior of the shock profile can be understood from the associated phase portrait, shown in Figure~\ref{pp}.

\begin{figure}[htp] \centering
    \includegraphics[width=.45\textwidth]{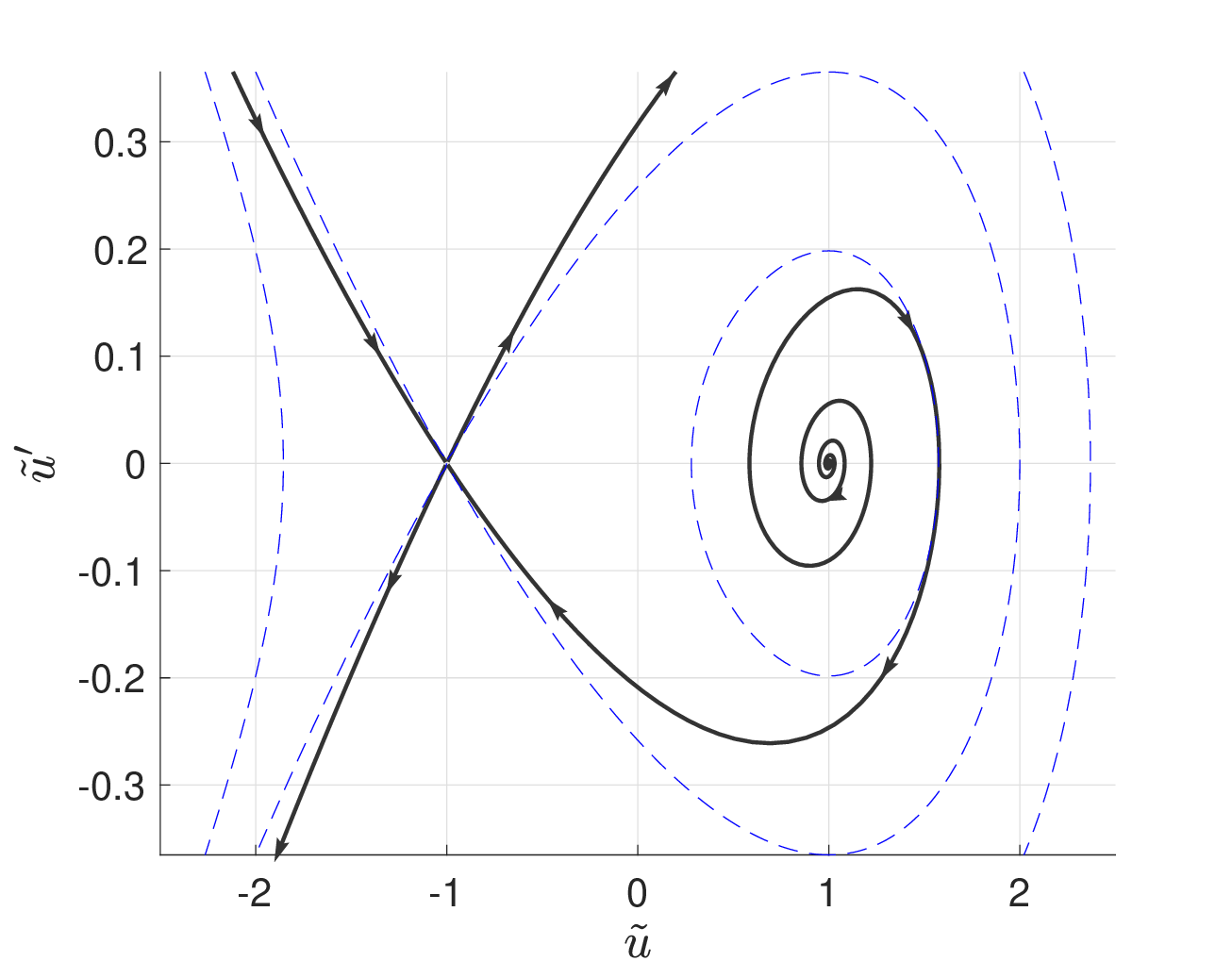}
    \includegraphics[width=.45\textwidth]{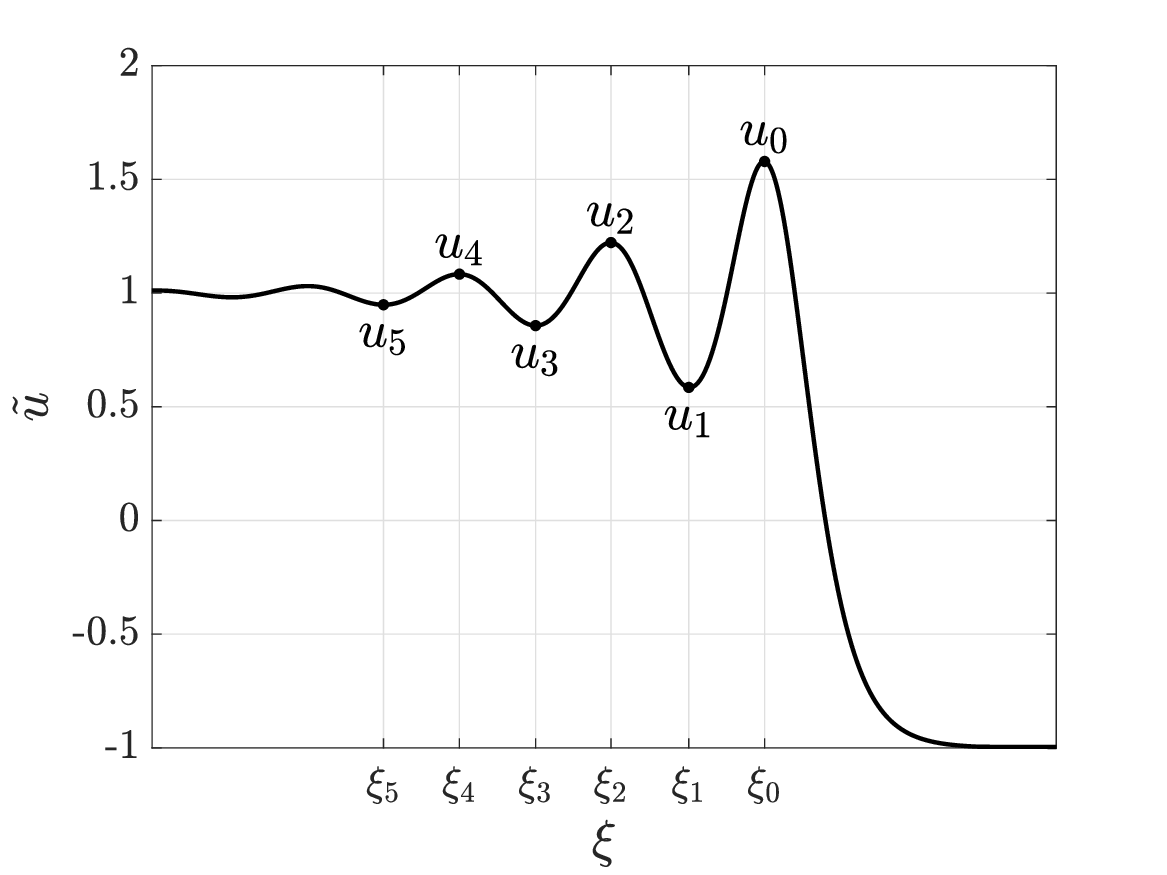}
    \caption{The left panel displays the phase portrait of solutions to \eqref{t00} with \(\d=10\),  \(\e_1=1\) and \(u_{\pm}=\mp 1\) in the \((\util,\util')\)-plane.
    The blue dotted curves correspond to solitary wave solutions of KdV-Burgers equation with \(\e_1=0\), while the black solid curve represents the heteroclinic orbit for \(\e_1=1\) and \(u_--u_+=2\).
    This orbit spirals from \(\util=u_-\) to \(\util=u_+\) and corresponds to the shock profile \(\util(\x)\) shown in the right panel.\label{pp}}
\end{figure}

As shown in the left panel of Figure~\ref{pp}, in the \((\util,\util')\)-plane, the heteroclinic orbit connects the two steady states \((\util=u_\pm, \util'=0)\) and corresponds to the dispersive shock solution shown in the right panel. Here we simply refer to it as a dispersive shock instead of a viscous-dispersive shock, for brevity.

If we linearize \eqref{t00} around the equilibrium \((\util=u_-, \util'=0)\), the eigenvalues are given by 
\begin{equation} \label{L-slope}
\l_{u_-} = \frac{\e_1 \pm \sqrt{\e_1^2-2\d(u_--u_+)}}{2\d}.
\end{equation}
When \(\e_1>0\) and \(\frac{\d(u_--u_+)}{2\e_1^2} >\frac{1}{4}\), the dispersion dominates the dissipation, so the dispersive shock is non-monotonic as in the right panel of Figure~\ref{pp}.
On the other hand, if \(0<\frac{\d(u_--u_+)}{2\e_1^2} \leq\frac{1}{4}\), the dispersive shock is monotonically decreasing.

Now we introduce our result for the traveling wave solution $\tilde{u}$ when  \(\frac{\d(u_--u_+)}{2\e_1^2} >\frac{1}{4}\), i.e., when the dispersive shock is non-monotonic.
We use the following notation: we index the local extrema of \(\util(\x)\) from right to left.
The rightmost extremum is denoted by \(u_0\), and the remaining extrema are indexed sequentially as \(u_1, \ u_2, \cdots\) in the order they appear when moving leftward.
We write \(\x_i\) for the spatial location at which the value \(u_i\) is attained, so that \(\util(\x_i)=u_i\), as shown in Figure \ref{pp}.

The results in \cite{chen2026uniform} for traveling wave solutions  of \eqref{shock equation} are summarized in the following theorem.

\begin{theorem}[{\cite[Theorem 2.1]{chen2026uniform}}] \label{thm:shock}
Let \(\e_1,\d>0\) be constants and let \(u_\pm\) be given states.
Assume that \(u_->u_+\) and 
\begin{equation} \label{cond2}
\frac{1}{4} < \frac{\d(u_--u_+)}{2\e_1^2} < A \le 1.
\end{equation}
Then, the following holds: 
\begin{equation} \label{u0-upper}
u_0-u_- \le \frac{(u_--u_+)}{2} \frac{1}{100A} \Big(\sqrt{9+25(\sqrt{1+4A}-1)^2}-3\Big)^2
\end{equation}
which provides an upper bound for the rightmost local maximum of the dispersive shock profile \(\util\) of  \eqref{shock equation}.
Moreover, the amplitudes of the oscillations \(\abs{u_i-u_-}\) decay exponentially in the sense that for any odd positive integer \(i>0\) and for each interval of $\xi$ with increasing $\util$, it holds that
\begin{equation} \label{inc-decay}
\frac{u_{i-1}-u_-}{u_--u_i} \ge \r_*>1,
\end{equation}
and for each interval of $\xi$ with decreasing $\util$, it holds that
\begin{equation} \label{dec-decay}
\frac{u_--u_i}{u_{i+1}-u_-} \ge \r^*>1,
\end{equation}
where the decay rates for selected values of \(A\) are given by:

\begin{table}[h]
\centering
\begin{tabular}{c c c c c}
\hline
$A$ & $\r_*$   & $\r^*$\\
\hline
$1/3$ & 6.05 & 6.14\\
$1/2$ & 4.64 & 4.77\\
$2/3$ & 3.89 & 4.06\\
$3/4$ & 3.63 & 3.81\\
$1$ & 3.10 & 3.30\\
\hline
\end{tabular}
\caption{Decay Rates}
\label{tab:decay}
\end{table}
\end{theorem}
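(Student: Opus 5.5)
Theorem~\ref{thm:shock} is quoted from \cite{chen2026uniform}; I would prove it by a phase-plane/energy argument on the second-order ODE \eqref{t00}. By Galilean invariance and scaling I normalize $\e_1=1$, $u_\pm=\mp s$, $\s=0$, so \eqref{t00} reads
\[
\d\util''=\util'-\tfrac12(\util^2-s^2).
\]
I introduce the conservative energy
\[
E(\x)=\tfrac{\d}{2}(\util')^2+V(\util),\qquad V(u)=\tfrac{s^2u}{2}-\tfrac{u^3}{6}.
\]
A direct computation gives $E'=(\util')^2\ge0$. Thus $E$ increases monotonically along the orbit from its value at $u_+$ (as $\x\to+\infty$, $E\to V(-s)$) while the orbit is traced backward in $\x$. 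Without friction the orbit would be the KdV soliton homoclinic to $-s$, with turning point $u=2s$. Friction makes the heteroclinic orbit land on $u_-=s$ instead, and the extrema $u_i$ are the points where $\util'=0$, so $V(u_i)=E(\x_i)$.

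The key steps are as follows. (i) Integrate $E'=(\util')^2$ between consecutive extrema to get
\[
V(u_{i})-V(u_{i+1})=\int_{\x_{i+1}}^{\x_i}(\util')^2\,d\x .
\]
(ii) Control this dissipation integral by the area $\oint \util'\,d\util$ over the corresponding half-turn of the phase curve. To do so, write the orbit in each monotone half-turn as a graph $p=\util'$ as a function of $\util$. This graph satisfies the first-order ODE
\[
\d\, p\,\frac{dp}{du}=p-\tfrac12(u^2-s^2).
\]
(iii) Bound $p$ from above and below by explicit comparison functions, namely sub- and supersolutions of this Riccati-type equation. Natural candidates are multiples of the frictionless energy level curve, $p\approx \pm\sqrt{2(E-V(u))/\d}$, with a correction linear in $u$ coming from the friction term.

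For the first-hump bound \eqref{u0-upper}, the plan is to follow the unstable manifold out of $u_+$ and compare $p$ with a parabola. Near $u_+$ the slope is the positive root of $\d\mu^2-\mu-s=0$, hence $\mu=(1+\sqrt{1+4\d s})/(2\d)$. Since $\d s<A$, the factor $\sqrt{1+4A}$ enters through this slope. The parabola ansatz $p=c(u+s)(u_0-u)$ turns the ODE into an algebraic inequality for $c$ and $u_0$; optimizing it should produce the quadratic-root expression $\big(\sqrt{9+25(\sqrt{1+4A}-1)^2}-3\big)^2/(100A)$.

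For the decay ratios \eqref{inc-decay} and \eqref{dec-decay}, I would linearize near $u_-$ with $w=u_--\util$. This gives $\d w''-w'+s w=-\tfrac12 w^2$, whose linear part has damped oscillation with decay factor $\exp(\pi/\sqrt{4\d s-1})$ per half-period. At $\d s=A$ the linear value gives ratios near the tabulated ones, for example about $4.8$ at $A=1/2$. The quadratic term then requires a monotonicity argument showing it only helps: the asymmetry of the cubic $V$ makes upper humps lose proportionally more energy. Combined with the energy identity $V(u_i)-V(u_{i+1})\ge c\,(\text{amplitude})^2$, this should give the lower bounds $\r_*$, $\r^*$.

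The main obstacle is step (iii): obtaining sharp, fully rigorous sub- and supersolutions for $p(u)$ on each half-turn. The nonlinearity matters most for the first few large oscillations, and the constants in Table~\ref{tab:decay} indicate that a numerically tight comparison, essentially computer-checked, is needed. I would first try piecewise-quadratic barriers in $u$ with parameters fitted at each tabulated $A$, and verify the differential inequality by checking polynomial sign conditions.
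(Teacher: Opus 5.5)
The paper does not prove this statement; it quotes it from \cite{chen2026uniform}. Taken on its own terms, your proposal is an outline, not a proof. The whole content of the theorem is the explicit bound \eqref{u0-upper} and the constants in Table~\ref{tab:decay}, and you derive neither: you say a parabola ansatz ``should produce'' the formula and that the nonlinearity ``only helps'', and you defer step (iii), which is the entire argument.

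Several concrete ingredients are also wrong. With $\varepsilon_1=1$ and $u_\pm=\mp s$:
\begin{itemize}
\item The Lyapunov function is $E=\frac{\delta}{2}(\tilde u')^2+V(\tilde u)$ with $V(u)=\frac{u^3}{6}-\frac{s^2u}{2}$. Your $V$ is the negative of this, and with it $E'=\tilde u'\big(\tilde u'-(\tilde u^2-s^2)\big)$, not $(\tilde u')^2$.
\item With the correct $V$, $E$ increases in $\xi$, so it decreases (not increases) as the orbit is traced backward from $u_+$.
\item The orbit tends to $(u_+,0)$ as $\xi\to+\infty$ with $\tilde u'<0$. Its slope there is therefore the negative root $\frac{1-\sqrt{1+4\delta s}}{2\delta}$ of $\delta\mu^2-\mu-s=0$; this is $h'(-s)$ in the proof of \eqref{main lemma}. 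The modulus $\mu:=\frac{\sqrt{1+4\delta s}-1}{2\delta}$ is what produces $\sqrt{1+4A}-1$ in \eqref{u0-upper}; your root would give $\sqrt{1+4A}+1$.
\item For $w=u_--\tilde u$ the equation is $\delta w''-w'+sw=+\frac12 w^2$, not $-\frac12w^2$.
\end{itemize}

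Even after these corrections, a parabola used through the energy identity cannot give \eqref{u0-upper}. On $(\xi_0,\infty)$ the identity reads $\int_{\xi_0}^\infty(\tilde u')^2\,d\xi=\frac16(u_0+s)^2(2s-u_0)$. Any lower barrier $|\tilde u'|\ge c(\tilde u+s)(u_0-\tilde u)$ must satisfy $c(u_0+s)\le\mu$, by comparing slopes at $u_+$; compare $\bar\lambda_0=0.355\approx(\sqrt3-1)/2.0601$ in Proposition~\ref{prop:key}. The best such a barrier yields is therefore $u_0-s\le s-\mu$. At $\delta s=\frac12$ this is $(2-\sqrt3)s\approx0.27s$, while \eqref{u0-upper} gives about $0.06s$. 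Some further idea is needed, and you have not identified it.

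The decay part has a more serious problem.
\begin{itemize}
\item Your calibration is off. The linear half-period ratio $e^{\pi/\sqrt{4\delta s-1}}$ equals $e^\pi\approx23$ at $\delta s=\frac12$, not $4.8$; at $A=1$ it is $e^{\pi/\sqrt3}\approx6.1$ against $\rho_*=3.10$. The tabulated rates are far below linear theory and are not perturbations of it.
\item Linearization only describes the limit $i\to\infty$. It gives no inequality for any fixed $i$ without a quantitative nonlinear estimate.
\item Your monotonicity claim is false for \eqref{inc-decay}. On an increasing interval $(\xi_i,\xi_{i-1})$ with $u_i=s-q$ and $u_{i-1}=s+p$, the corrected energy identity is $\frac s2(p^2-q^2)+\frac16(p^3+q^3)=\int_{\xi_i}^{\xi_{i-1}}(\tilde u')^2\,d\xi$. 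For a given dissipation the cubic term lowers $p/q$. On decreasing intervals it enters with the opposite sign, which is consistent with $\rho_*<\rho^*$ in the table. So the nonlinearity works against $\rho_*$.
\end{itemize}

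Your framework of parametrizing each monotone piece by $\tilde u$ and comparing $\tilde u'$ with explicit barriers is the right one. What a proof actually requires is:
\begin{itemize}
\item a rigorous lower barrier for $|\tilde u'|$ on every half-turn, with coefficient proportional to the inverse amplitude (as in $\bar\lambda_i\sim\rho_*^i$ in Proposition~\ref{prop:key});
\item a proof of that barrier by a contact-point comparison of the type used for \eqref{main lemma};
\item insertion of the barrier into the energy identity, with all estimates controlled uniformly in $i$ and in $\delta s\in(\frac14,A)$.
\end{itemize}
None of this is carried out in the proposal.
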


\section{Preliminaries}
\setcounter{equation}{0}
 We change the variable from $x$ to \(\xi = x - \sigma t\) and rewrite (\ref{KP main equation}) in terms of $u(t,\xi,y)$ as
\begin{equation}
\label{change of varibale equation}
\begin{cases}
    u_{t} - \sigma u_{\xi} + uu_{\xi} = \varepsilon_1 u_{\xi\xi}+ \varepsilon_2 u_{yy} -\delta  u_{\xi \xi \xi} + v,  \\
    v_\xi = - \lambda u_{yy}.
\end{cases}
\end{equation}
Here we abuse the notation and still use $u$ to denote the function  $u(t,\xi,y)$.
\begin{lemma}[{\cite[Lemma 2.9]{kang2020contraction}}]\label{KV} 
For any $w:[a,b] \to \mathbb{R}$ satisfying \[\int_a^b(y-a)(b-y)|w_y|^2 \,dy < \infty,\] 
we have,
\begin{equation}
    \label{poincare0}\int_a^b w^2\,dy\leq \frac{1}{2}\int_a^b(y-a)(b-y)|w_y|^2 \,dy+\frac{1}{b-a}(\int_a^b w \,dy)^2.
\end{equation}
\end{lemma}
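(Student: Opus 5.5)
The plan is to reduce to a fixed interval by scaling and then diagonalize the weighted Dirichlet form using shifted Legendre polynomials.

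\textbf{Scaling.} Set $y=a+(b-a)z$ with $z\in[0,1]$ and $W(z)=w(a+(b-a)z)$. Then
\[
\int_a^b w^2\,dy=(b-a)\int_0^1 W^2\,dz,\qquad \int_a^b (y-a)(b-y)|w_y|^2\,dy=(b-a)\int_0^1 z(1-z)|W_z|^2\,dz,
\]
and $\frac{1}{b-a}\big(\int_a^b w\,dy\big)^2=(b-a)\big(\int_0^1 W\,dz\big)^2$. All three terms scale by the same factor $b-a$, so \eqref{poincare0} is equivalent to
\[
\int_0^1 W^2\,dz\le \tfrac12\int_0^1 z(1-z)|W_z|^2\,dz+\Big(\int_0^1 W\,dz\Big)^2 .
\]

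\textbf{Spectral decomposition.} I will use the shifted Legendre polynomials $P_n(z)=L_n(2z-1)$. They form a complete orthogonal basis of $L^2(0,1)$, with $P_0\equiv1$, and they solve
\[
-\big(z(1-z)P_n'\big)'=n(n+1)P_n .
\]
Take $W$ smooth on $[0,1]$ and expand $W=\sum_n c_nP_n$. Integration by parts gives
\[
\int_0^1 z(1-z)W'P_n'\,dz=n(n+1)\int_0^1 WP_n\,dz,
\]
and the boundary terms vanish because the weight $z(1-z)$ is zero at both endpoints. With $\|P_n\|^2=\frac{1}{2n+1}$, Parseval then yields
\[
\int_0^1W^2=\sum_n c_n^2\|P_n\|^2,\qquad \int_0^1 z(1-z)|W'|^2=\sum_n n(n+1)c_n^2\|P_n\|^2,
\]
and the mean term equals $c_0^2=\big(\int_0^1W\big)^2$. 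For every $n\ge1$ we have $\tfrac12 n(n+1)\ge1$, so each mode with $n\ge 1$ is controlled by the weighted term, and the $n=0$ mode is exactly the mean term. This proves the inequality. The constant $\tfrac12$ is sharp, with equality for $W$ linear.

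\textbf{Main obstacle: general $w$.} The hypothesis only assumes that the weighted Dirichlet integral is finite, so I must pass from smooth $W$ to this class. Because the weight degenerates at the endpoints, it is not obvious a priori that $W\in L^2$ or even that $\int_0^1 W$ makes sense; the statement implicitly assumes $w$ is locally $H^1$ on $(a,b)$. I would handle this with the truncations $W_\eta$, defined as $W$ on $[\eta,1-\eta]$ and extended by constants outside. Each $W_\eta$ has a finite weighted energy that is at most that of $W$, so the inequality holds for $W_\eta$, for instance after mollifying on $[0,1]$.

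Letting $\eta\to0$ requires more care. If the mean of $W_\eta$ stays bounded, the inequality bounds $\|W_\eta\|_{L^2}$ uniformly, and Fatou's lemma then gives $W\in L^2$. Once $W\in L^2$, dominated convergence passes the mean term to the limit, and the weighted term converges monotonically.

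To rule out an unbounded mean, I apply the inequality to $W_\eta-W(1/2)$ on each half-interval separately. Together with the one-dimensional bound
\[
|W(z)-W(1/2)|\le C\big|\log\big(z(1-z)\big)\big|^{1/2},
\]
which follows from Cauchy--Schwarz applied with the weight, this gives integrable control near the endpoints. I expect this density and integrability step, rather than the spectral computation, to be the part needing the most care.
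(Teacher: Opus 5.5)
Your proposal is correct and follows essentially the same route as the standard proof of the cited Lemma 2.9 of Kang--Vasseur--Wang (this paper gives no proof of its own): rescale to $[0,1]$ and expand in shifted Legendre polynomials, which diagonalize $-\partial_z\big(z(1-z)\partial_z\big)$ with eigenvalues $n(n+1)$, so the first nonzero eigenvalue is $2$. Your extra step for nonsmooth $w$, which the cited source glosses over, is sound. The logarithmic bound $|W(z)-W(1/2)|\le C\,|\log(z(1-z))|^{1/2}$ alone already gives $W\in L^2(0,1)$ and the domination needed to pass to the limit in the truncations, so the bounded-mean discussion is not needed. Also, for the weighted term Bessel's inequality suffices, so you do not need completeness of $\{P_n'\}$ in the weighted space.
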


\begin{lemma}\label{d_dtlemma}
    Let $\tilde{u}$ be a planar viscous-dispersive shock satisfying equation (\ref{shock equation}), and for any $T>0$, let $u\in X_{T}$ be a solution of the equation \eqref{KP main equation}. Then for any Lipschitz curve $X : [0,T] \to \mathbb{R}$, the following holds:
\begin{equation}\label{main equation 0}
\begin{aligned}
\frac{d}{dt} \int_{\Omega} \frac{|u - \tilde{u}^{-X}|^2}{2} \, d\xi \, dy
&= \dot{X} \int_{\Omega} (u^{X} - \tilde{u}) \, \tilde{u}' \, d\xi \, dy  - \frac{1}{2} \int_{\Omega} |u^{X} - \tilde{u}|^2 \, \tilde{u}' \, d\xi \, dy - \varepsilon_1 \int_{\Omega} |(u^{X} - \tilde{u})_\xi|^2 \, d\xi \, dy \\
&\qquad - \varepsilon_2 \int_{\Omega} |(u^{X} - \tilde{u})_y|^2 \, d\xi \, dy ,
\end{aligned}
\end{equation}
    where \(u^{X}(t,\xi ,y) = u(t,\xi +X(t),y) \) and \(\tilde{u}^{- X} = \tilde{u} (\xi -X(t))\).
\end{lemma}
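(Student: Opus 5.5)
The plan is to shift into the frame of the shifted profile and differentiate directly. Set $w(t,\xi,y):=u^X(t,\xi,y)-\tilde u(\xi)$, with $u^X(t,\xi,y)=u(t,\xi+X(t),y)$. Changing variables $\xi\mapsto\xi+X(t)$ in the integral shows that $\int_\Omega |u-\tilde u^{-X}|^2\,d\xi\,dy=\int_\Omega |w|^2\,d\xi\,dy$. Since $\partial_t u^X=(u_t)^X+\dot X\,u^X_\xi$, the shifted function satisfies
\[
u^X_t-\dot X u^X_\xi-\sigma u^X_\xi+u^Xu^X_\xi=\varepsilon_1u^X_{\xi\xi}+\varepsilon_2u^X_{yy}-\delta u^X_{\xi\xi\xi}+v^X,\qquad v^X_\xi=-\lambda u^X_{yy}.
\]
The profile $\tilde u$ is independent of $t$ and $y$, so \eqref{shock equation} gives $-\sigma\tilde u'+\tilde u\tilde u'=\varepsilon_1\tilde u''-\delta\tilde u'''$.

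Subtracting the two equations gives
\[
w_t=\dot X(w_\xi+\tilde u')+\sigma w_\xi-(u^Xu^X_\xi-\tilde u\tilde u')+\varepsilon_1w_{\xi\xi}+\varepsilon_2w_{yy}-\delta w_{\xi\xi\xi}+v^X .
\]
I then multiply by $w$ and integrate over $\Omega$. The terms are handled one at a time.
\begin{itemize}
\item The transport terms $\dot X\int ww_\xi$ and $\sigma\int ww_\xi$ vanish.
\item The term $\dot X\int w\tilde u'$ is exactly the first term of \eqref{main equation 0}.
\item The dispersive term satisfies $\int ww_{\xi\xi\xi}=0$ after integrating by parts twice.
\item The viscous terms give $-\varepsilon_1\int|w_\xi|^2-\varepsilon_2\int|w_y|^2$. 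The $y$-integration by parts is justified by periodicity in $y$.
\item For the nonlinear term, write $u^Xu^X_\xi-\tilde u\tilde u'=(\tfrac12 w^2+\tilde u w)_\xi$. Then $-\int w(\tfrac12w^2+\tilde uw)_\xi=\int w_\xi(\tfrac12 w^2+\tilde u w)$. The cubic piece $\tfrac12\int w^2w_\xi$ is the integral of a derivative and vanishes. The remaining piece is $\int\tilde u ww_\xi=-\tfrac12\int\tilde u' w^2$, which is the second term of \eqref{main equation 0}.
\end{itemize}
The boundary terms at $\xi=\pm\infty$ vanish because $u\in X_T$ gives $w\in H^1$ in $\xi$ for a.e.\ $t$ (and $H^2$ for a.e.\ $t$ from the $L^2_tH^2$ bound), since $\tilde u-f$ decays exponentially. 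To be rigorous I would argue for smooth approximations and pass to the limit; the dispersive term needs $w\in H^2$, or equivalently interpreting $\int ww_{\xi\xi\xi}=-\int w_\xi w_{\xi\xi}=0$ in the $H^2$ sense.

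The main obstacle is the nonlocal term $\int_\Omega wv^X$, which must be shown to vanish. The relation $v_\xi=-\lambda u_{yy}$ determines $v$ only up to a function of $(t,y)$. The natural choice is
\[
v(t,\xi,y)=-\lambda\int_{-\infty}^\xi u_{yy}(t,\eta,y)\,d\eta=-\lambda\,\partial_{yy}\int_{-\infty}^\xi (u-f)\,d\eta,
\]
which decays as $\xi\to\pm\infty$ by the zero-mean condition \eqref{zeromean2}. Note that $u_{yy}=w_{yy}$ after shifting, because $\tilde u$ does not depend on $y$. Set $W:=\int_{-\infty}^\xi w\,d\eta$, which tends to $0$ at both ends of $\mathbb R$ by the zero-mean property; here I would use that $\int(\tilde u-f)\,d\xi$ is constant in $y$, so that the mean condition transfers to $w$ up to a $y$-independent constant. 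Then $v^X=-\lambda W_{yy}$ plus a possibly $y$-independent piece. It follows that
\[
\int_\Omega wv^X=-\lambda\int_\Omega W_\xi W_{yy}=\lambda\int_\Omega W_{\xi y}W_y=\frac{\lambda}{2}\int_{\mathbb T^1}\big[W_y^2\big]_{\xi=-\infty}^{\xi=+\infty}dy=0,
\]
since $W_y\to0$ at both ends. The function $W_y$ is controlled because $\int w_y\,d\xi=0$ for each $y$.

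The hardest point is justifying the choice of $v$ and the decay of $W$ given only $u-f\in H^1$. For this I would rely on the solution constructed in Theorem \ref{gloabal existence}, where $v$ is defined by exactly this antiderivative, together with a density argument. Collecting the terms then yields \eqref{main equation 0}.
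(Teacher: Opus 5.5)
Your proposal is correct and follows essentially the same route as the paper: the same term-by-term energy identity, and the same treatment of the nonlocal term through the antiderivative $W=\int_{-\infty}^{\xi}w\,d\eta$, reducing $\int_\Omega w\,v^X$ to $\frac{\lambda}{2}\int_{\mathbb{T}^1}\big[W_y^2\big]_{\xi=-\infty}^{\xi=+\infty}dy=0$ via the zero-mean condition. The differences are cosmetic. You pass to the shifted frame before differentiating, whereas the paper differentiates $u-\tilde{u}^{-X}$ in the original frame and changes variables $\xi\mapsto\xi+X(t)$ at the end. Also, $W$ itself need not vanish at $\xi=+\infty$ (its limit $\int_{\mathbb{R}}w\,d\xi$ is $y$-independent but generally nonzero); this is harmless, since, as you correctly do, only $W_y\to0$ is used.
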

\begin{proof}

We use \eqref{change of varibale equation} and \eqref{shock equation} to obtain 
\begin{equation}\nonumber
\begin{aligned}
(u - \tilde{u}^{-X})_t
&= \dot{X}(t)\,\tilde{u}_{\xi}^{-X}
+ \sigma\,(u - \tilde{u}^{-X})_{\xi}  - \left( \frac{u^2}{2} - \frac{(\tilde{u}^{-X})^2}{2} \right)_{\xi} + \varepsilon_1 (u - \tilde{u}^{-X})_{\xi\xi}
+ \varepsilon_2 (u - \tilde{u}^{-X})_{yy} \\
&\qquad - \delta (u - \tilde{u}^{-X})_{\xi\xi\xi}
+ v.
\end{aligned}
\end{equation}
Then we compute
\begin{align}\label{3.4}
&\frac{d}{dt} \int_{\Omega} \frac{|u - \tilde{u}^{-X}|^2}{2} \, d\xi \, dy \nonumber \\
=& \int_{\Omega} (u - \tilde{u}^{-X}) (u - \tilde{u}^{-X})_t \, d\xi \, dy,  \nonumber \\
=& \int_{\Omega} (u - \tilde{u}^{-X}) \Big[
\dot{X}\,\tilde{u}^{-X}_{\xi}
+ \sigma (u - \tilde{u}^{-X})_{\xi}
- \Big( \frac{u^2}{2} - \frac{(\tilde{u}^{-X})^2}{2} \Big)_{\xi} 
+ \varepsilon_1 (u - \tilde{u}^{-X})_{\xi\xi}
 \nonumber \\
&{\color{white}...............................................................}+ \varepsilon_2 (u - \tilde{u}^{-X})_{yy}- \delta (u - \tilde{u}^{-X})_{\xi\xi\xi}
+ v \Big] \, d\xi \, dy \nonumber \\ \nonumber\\
=& \ \dot{X} \int_{\Omega} (u - \tilde{u}^{-X}) \tilde{u}^{-X}_{\xi} \, d\xi \, dy  - \int_{\Omega} (u - \tilde{u}^{-X})
\left( \frac{u^2}{2} - \frac{(\tilde{u}^{-X})^2}{2} \right)_{\xi} \, d\xi \, dy \nonumber \\
& {\color{white}} +\varepsilon_1 \int_{\Omega} (u - \tilde{u}^{-X}) (u - \tilde{u}^{-X})_{\xi\xi} \, d\xi \, dy  + \varepsilon_2 \int_{\Omega} (u - \tilde{u}^{-X}) (u - \tilde{u}^{-X})_{yy} \, d\xi \, dy \nonumber\\
&{\color{white}}- \delta \int_{\Omega} (u - \tilde{u}^{-X}) (u - \tilde{u}^{-X})_{\xi\xi\xi} \, d\xi \, dy  + \int_{\Omega} (u - \tilde{u}^{-X}) v \, d\xi \, dy  + \sigma \int_{\Omega} (u - \tilde{u}^{-X})_{\xi} (u - \tilde{u}^{-X}) \, d\xi \, dy.
\end{align}
We now analyze each term in \eqref{3.4} separately as follows:
\begin{equation*}
\begin{aligned}
-\int_{\Omega} (u - \tilde{u} ^{- X}) \left( \frac{u^2}{2} - \frac{(\tilde{u} ^{- X})^2}{2} \right)_\xi\, d\xi\, dy 
&= \int_{\Omega} (u - \tilde{u} ^{- X})_\xi \left( \frac{u^2}{2} - \frac{(\tilde{u}^{ - X})^2}{2} \right)\, d\xi\, dy \\
&= \frac{1}{2} \int_{\Omega} (u - \tilde{u}^{ - X})_\xi \left[ (u - \tilde{u} ^{- X})^2 + 2(\tilde{u}^ {- X})(u - \tilde{u} ^{- X}) \right]\, d\xi\, dy \\
&= \frac{1}{2} \int_{\Omega} \left( (u - \tilde{u} ^{- X})^2 \right)_\xi (\tilde{u} ^{- X})\, d\xi\, dy \\
&= -\frac{1}{2} \int_{\Omega} (u - \tilde{u} ^{- X})^2 (\tilde{u}')^{-X}\, d\xi\, dy.
\end{aligned}
\end{equation*}

Using integration by parts and boundary conditions, we can prove that the dispersion term vanishes since 
\[\delta \int_{\Omega} (u - \tilde{u}^{-X})(u - \tilde{u}^{-X})_{\xi\xi\xi}\, d\xi\, dy =0.\]
Similarly, the dissipation term becomes
\[
\begin{aligned}
&\varepsilon_1 \int_{\Omega} (u - \tilde{u}^{-X}) (u - \tilde{u}^{-X})_{\xi\xi} \, d\xi \, dy
+ \varepsilon_2 \int_{\Omega} (u - \tilde{u}^{-X}) (u - \tilde{u}^{-X})_{yy} \, d\xi \, dy \\
&= - \varepsilon_1 \int_{\Omega} |(u - \tilde{u}^{-X})_{\xi}|^2 \, d\xi \, dy
- \varepsilon_2 \int_{\Omega} |(u - \tilde{u}^{-X})_{y}|^2 \, d\xi \, dy.
\end{aligned}
\]
The last term becomes
\[\sigma \int_{\Omega} (u -\tilde{u}^{-X})_{\xi} \,(u -\tilde{u}^{-X}) \,d\xi\, dy = \frac{\sigma}{2} \int_{\Omega} \left(|u -\tilde{u}^{-X}|^{2}\right)_{\xi} \, d\xi\, dy =0. 
\]
Then, we use a change of variable \(\xi \mapsto \xi +X(t)\) to have
\begin{align} \label{last term main}
\frac{d}{dt} \int_{\Omega} \frac{|u - \tilde{u}^{-X}|^2}{2} \, d\xi \, dy
&= \dot{X} \int_{\Omega} (u^{X} - \tilde{u}) \, \tilde{u}' \, d\xi \, dy  - \frac{1}{2} \int_{\Omega} |u^{X} - \tilde{u}|^2 \, \tilde{u}' \, d\xi \, dy - \varepsilon_1 \int_{\Omega} |(u^{X} - \tilde{u})_{\xi}|^2 \, d\xi \, dy \nonumber \\
&\quad {\color{white}......}- \varepsilon_2 \int_{\Omega} |(u^{X} - \tilde{u})_{y}|^2 \, d\xi \, dy+ \int_{\Omega} (u^{X} - \tilde{u}) \, v^{X} \, d\xi \, dy .
\end{align}
where $v^{X}(t,\xi,y)=v(t,\xi+X(t),y)$.

Now we compute the last term in (\ref{last term main}).
\begin{eqnarray}\nonumber
   && \int_{\Omega} (u^{X} -\tilde{u})\, v^{X}\,d\xi\, dy \\\nonumber 
    &=&\int_{\Omega} (u^{X} -\tilde{u}) \left(-\lambda \int_{-\infty}^\xi u^X(t,x_1,y)\,dx_1\right)_{yy}\, d\xi \,dy 
    \\ \nonumber 
 &=& -\lambda \int_{\Omega} (u^{X} -\tilde{u}) \left( \int_{-\infty}^\xi (u^X(t,x_1,y)-\tilde{u}(x_1))\,dx_1\right)_{yy}\, d\xi \,dy \nonumber \\
 &=& -\lambda \int_{\mathbb{T}^1} \int_{\mathbb{R}} \left( \int_{-\infty}^\xi (u^{X}(t,x_1,y) -\tilde{u}(x_1)) \,dx_1\right)_\xi \left( \int_{-\infty}^\xi (u^X(t,x_1,y)-\tilde{u}(x_1)) \,dx_1\right)_{yy} \,d\xi \,dy \nonumber \\
 &=& \lambda \int_{\mathbb{T}^1} \int_{\mathbb{R}} \left( \int_{-\infty}^\xi (u^{X}(t,x_1,y) -\tilde{u}(x_1))\, dx_1\right)_{\xi y} \left( \int_{-\infty}^\xi (u^X(t,x_1,y)-\tilde{u}(x_1))\, dx_1\right)_{y} \,d\xi\, dy \nonumber \\ 
 && -\lambda  \int_{\mathbb{R}} \left( \int_{-\infty}^\xi (u^{X}(t,x_1,1) -\tilde{u}(x_1)) \,dx_1\right)_\xi \left( \int_{-\infty}^\xi (u^X(t,x_1,1)-\tilde{u}(x_1))\, dx_1\right)_{y} \,d\xi \nonumber \\
 && +\lambda  \int_{\mathbb{R}} \left( \int_{-\infty}^\xi (u^{X}(t,x_1,0) -\tilde{u}(x_1))\, dx_1\right)_\xi \left( \int_{-\infty}^\xi (u^X(t,x_1,0)-\tilde{u}(x_1)) \,dx_1\right)_{y} \,d\xi .\nonumber 
\end{eqnarray} 
Using integration by parts and the boundary conditions, we get 
\begin{eqnarray} \label{Extra term bound}
    \int_{\Omega} (u^{X} -\tilde{u})\, v^{X}\,d\xi\, dy 
    = \frac{\lambda}{2} \int_{\mathbb{T}^{1}} \left(\left( \int_{\mathbb{R}} (u^X-\tilde{u})\, d\xi \right)_y \right)^2 dy 
    = 0,
\end{eqnarray}
where the last step follows from \eqref{zeromean2}. This completes the proof of Lemma $\ \ref{d_dtlemma}$.
\end{proof}

Now we choose $\dot X= -\frac{M}{2s} \left(\int_{\Omega}(u^{X} - \tilde{u})\,\tilde{u}'\,d\xi\, dy \right)$ as in \eqref{lipstich} or \eqref{shift non monotone} with different $M$ for monotone and non-monotone cases, then we have 
\begin{align} \label{main equation}
  & \frac{d}{dt} \int_{\Omega} \frac{|u - \tilde{u}^{-X}|^2}{2} \,d\xi\, dy =- \frac{M}{2s} \left(\int_{\Omega}(u^{X} - \tilde{u})\,\tilde{u}'\,d\xi\, dy \right)^2 - \frac{1}{2}\int_{\Omega}|u^{X}-\tilde{u}|^2\,\tilde{u}' \,d\xi\, dy \nonumber \\
  &{\color{white}........................................}- \varepsilon_1 \int_{\Omega}|(u^{X} -\tilde{u})_{\xi}|^{2}  \,d\xi\, dy 
     - \varepsilon_2 \int_{\Omega}|(u^{X} -\tilde{u})_{y}|^{2}  \,d\xi\, dy.
\end{align} 

\section{$L^2$-contraction for the monotone dispersive shock}
\setcounter{equation}{0}

Throughout this section we assume $ 0<\frac{\d(u_- -\ u_+)}{2\e_1^2} \leq \frac{1}{4}$, so the traveling shock wave is monotonically decreasing from $u_-$ to $ u_+$.



\vspace{0.2cm}
\subsection{Key inequalities}
\begin{lemma}
Assume that 
\begin{equation}
    \gamma > \frac{1}{2},\quad \varepsilon_1^2 >  \frac{4\gamma^2\delta s}{2\gamma-1},\quad \lambda_1 < \sqrt{2} - 1. \nonumber
\end{equation}
Let $\tilde{u}$ be the viscous-dispersive shock satisfying the equation \eqref{shock equation}.
Then, the following inequality holds:
\begin{equation} \label{main lemma}
  \lambda_1\ (s - \tilde{u}(\xi))\ (\tilde{u}(\xi) + s) \le -\varepsilon_1\ \tilde{u}'(\xi) \le \gamma\ (s - \tilde{u}(\xi))\ (\tilde{u}(\xi) + s), \qquad \forall \xi \in \mathbb{R}. 
\end{equation}
\end{lemma}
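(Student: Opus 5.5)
The plan is to reduce both inequalities to statements about the heteroclinic orbit in the phase plane and prove them by a barrier (comparison) argument. By the Galilean reduction we may take $u_\pm=\mp s$ and $\sigma=0$. Then \eqref{t00} reads
\[
\varepsilon_1\tilde u'=\tfrac12(\tilde u^2-s^2)+\delta\tilde u''.
\]
In this section $\tilde u$ is strictly decreasing from $s$ to $-s$ (the hypothesis on $\varepsilon_1$ forces $\delta(u_--u_+)/(2\varepsilon_1^2)\le 1/4$). So we may write $w(u):=-\tilde u'(\xi)>0$ as a function of $u=\tilde u(\xi)\in(-s,s)$. Since $\tilde u''=w\,w_u$, the orbit satisfies the first‑order ODE
\[
\delta\, w\, w_u=\tfrac12(s^2-u^2)-\varepsilon_1 w,\qquad w(\pm s)=0 .
\]
The claim \eqref{main lemma} becomes
\[
B_{\lambda_1}(u)\le w(u)\le B_\gamma(u),\qquad B_c(u):=\frac{c}{\varepsilon_1}(s^2-u^2),
\]
i.e. the orbit lies between two parabolas through the two equilibria. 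For $\delta=0$ the orbit is exactly $B_{1/2}$, which explains why $\lambda_1<1/2<\gamma$.

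First I will compute the orbit's slope on each barrier. On the curve $w=B_c(u)$ the orbit slope is
\[
w_u=\frac{(\tfrac12-c)(s^2-u^2)}{\delta B_c}=\frac{(\tfrac12-c)\varepsilon_1}{\delta c},
\]
which is a constant. The barrier slope is $B_c'(u)=-2cu/\varepsilon_1$. The orbit is traversed from $u=s$ (as $\xi\to-\infty$) to $u=-s$. Invariance of $\{w\le B_\gamma\}$ therefore requires a one‑signed comparison between the constant $(\tfrac12-\gamma)\varepsilon_1/(\delta\gamma)<0$ and $-2\gamma u/\varepsilon_1$ over $u\in[-s,s]$. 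The worst case is an endpoint $|u|=s$, and there the comparison reduces exactly to $\varepsilon_1^2$ versus $4\gamma^2\delta s/(2\gamma-1)$. That is the hypothesis, which confirms this is the intended mechanism. The lower barrier is handled the same way with $c=\lambda_1$: the orbit slope $(\tfrac12-\lambda_1)\varepsilon_1/(\delta\lambda_1)>0$ is compared with $-2\lambda_1u/\varepsilon_1$. Here I will use the monotonicity condition $\varepsilon_1^2\ge 8\delta s$ (equivalently $\delta(u_--u_+)/(2\varepsilon_1^2)\le1/4$) to close the resulting quadratic inequality in $\lambda_1$. The root of that quadratic should produce the threshold $\sqrt2-1$, or a larger one, in which case $\lambda_1<\sqrt2-1$ is simply sufficient.

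The second step is the endpoints, where both $w$ and $B_c$ vanish and the strict‑crossing argument degenerates. This is where I expect the main difficulty. Near $u=\pm s$, linearization gives $w\sim\mu_\pm |u\mp s|$, where $\mu_\pm$ are the relevant eigenvalues, the roots of $\delta\mu^2\mp\ldots$ from \eqref{L-slope}. I will compare these slopes with the barrier slopes $|B_c'(\pm s)|=2cs/\varepsilon_1$, so that the orbit starts strictly inside the region between the barriers. Concretely, at $u=s$ the slope is the decaying eigenvalue $\mu=\frac{\varepsilon_1-\sqrt{\varepsilon_1^2-4\delta s}}{2\delta}$ (for the unstable manifold as $\xi\to-\infty$). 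Using $\varepsilon_1^2\ge 8\delta s$ I will need $2\lambda_1 s/\varepsilon_1\le\mu\le 2\gamma s/\varepsilon_1$, and rationalizing $\mu=2s/(\varepsilon_1+\sqrt{\varepsilon_1^2-4\delta s})$ makes this elementary. I will do the analogous check at $u=-s$ with the saddle eigenvalue.

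If the endpoint slopes coincide with the barrier slopes in a borderline case, I will instead compare second‑order Taylor coefficients, or perturb $c$ slightly and pass to the limit. Once the orbit starts inside and cannot cross either parabola in the interior, $B_{\lambda_1}\le w\le B_\gamma$ holds on $(-s,s)$. Translating back via $w=-\tilde u'$ gives \eqref{main lemma}, and \eqref{main lemma0} is the lower half alone.
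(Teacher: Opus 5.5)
Your overall route matches the paper's. For the upper bound the paper writes $\tilde u'=h(\tilde u)$, compares it with the parabola $p(a)=\frac{\gamma}{\varepsilon_1}(a^2-s^2)$, uses that on $\{h=p\}$ the orbit slope is the constant $\frac{1}{\delta}(\varepsilon_1-\frac{\varepsilon_1}{2\gamma})$, and checks the eigenvalue slopes at $a=\pm s$; the lower bound is quoted from \cite{chen2026mono}. Two steps of your plan are wrong as written, though both are fixable.

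\textbf{The monotonicity constant.} Since $u_--u_+=2s$, the condition $\frac{\delta(u_--u_+)}{2\varepsilon_1^2}\le\frac14$ means $\varepsilon_1^2\ge 4\delta s$, not $8\delta s$. The lemma's hypothesis gives nothing better: for $\gamma=1$ it is just $\varepsilon_1^2>4\delta s$. With $8\delta s$ your lower-barrier quadratic would be $\lambda_1^2+4\lambda_1-2<0$, giving the threshold $\sqrt6-2$ from a premise you do not have. With the correct bound, you need the orbit slope $\frac{(1-2\lambda_1)\varepsilon_1}{2\delta\lambda_1}$ to exceed the barrier slope $\frac{2\lambda_1 s}{\varepsilon_1}$ at $u=-s$, i.e.\ $(1-2\lambda_1)\varepsilon_1^2>4\lambda_1^2\delta s$. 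This follows from $\lambda_1^2+2\lambda_1-1<0$, i.e.\ exactly from $0<\lambda_1<\sqrt2-1$ (the case $\lambda_1\le 0$ is trivial). There is no slack: $\sqrt2-1$ is the threshold in the borderline case $\varepsilon_1^2=4\delta s$.

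\textbf{The direction of invariance.} A one-signed slope comparison does not prevent crossing a parabola. It makes every contact a transversal crossing in one fixed direction, and along your direction of traversal ($u$ decreasing from $s$ to $-s$) that direction is \emph{outward} for both barriers:
on $w=B_\gamma$ you have $w_u<B_\gamma'$, so $w-B_\gamma$ goes from negative to positive as $u$ decreases;
on $w=B_{\lambda_1}$ you have $w_u>B_{\lambda_1}'$, so $w-B_{\lambda_1}$ goes from positive to negative.
So ``starts inside at $u=s$'' propagates nothing. The argument can be closed in either of two ways.

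First, the orbit is strictly inside near \emph{both} endpoints, so any excursion would need crossings of both signs. This is the paper's contradiction with the two points $b<c$; there the hypothesis on $\varepsilon_1$ enters through the node slope, and only the monotonicity of $g$ is used.

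Second, and more cleanly, start at the saddle $u=-s$ and move in the direction of increasing $u$, for which the region between the parabolas is invariant. This uses only the saddle slope $\frac{\sqrt{\varepsilon_1^2+4\delta s}-\varepsilon_1}{2\delta}$, which is uniquely determined, and makes the node check unnecessary. That is a real gain: your identification of the node slope with the smaller root is not justified as stated. The paper's identification has the same problem, since it appeals to $h'<\varepsilon_1/\delta$, a bound that both roots satisfy.
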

\begin{proof}
We follow the proof of Lemma 2.3 in \cite{chen2026mono} to obtain the lower bound of $ -\varepsilon_1 \tilde{u}'(\xi) $. 

    For the upper bound, we consider the parameterization in terms of $a=\tilde{u}$ as follows:
    \begin{equation}
        h(a) = \tilde{u}_\xi, \ \ p(a) = \frac{\gamma}{\varepsilon_1}\ (a -s)\ (a + s). \nonumber
    \end{equation}
The function h(a) is well-defined since $\tilde{u}$ is monotonically decreasing. We need to show 
\begin{equation}\label{h(a) > p(a)}
    h(a) \ge p(a),  \qquad \forall a \in [-s,s].
\end{equation}
Since \begin{equation} \label{h(s)=0}
    h(-s) = p(-s) = h(s) = p(s) = 0,
\end{equation}  
the inequality is satisfied at the end points. 

We compute the derivative of these functions at the endpoints. Note that since we assume $\sigma=0$, to find the derivative of the function $h$, we integrate (\ref{shock equation}) over $(-\infty, x)$ to find that
\begin{equation}\label{h'(a)}
 \quad h'(a) = \frac{d\tilde{u}_\xi}{d\tilde{u}} = \frac{\tilde{u}_{\xi\xi}}{\tilde{u}_\xi} = \frac{1}{\delta} \left[ \varepsilon_1 - \frac{1}{2h(a)} (a - s)(a + s) \right].
\end{equation}
Using L'H\^{o}pital's rule, we get 
\begin{align}
\lim_{a \to -s} h'(a) &= \lim_{a \to -s} \frac{1}{\delta} \left[ \varepsilon_1 - \frac{(a - s)}{2} \frac{(a + s)}{h(a)} \right] 
\quad \Rightarrow \quad h'(-s) = \frac{1}{\delta} \left[ \varepsilon_1 + \frac{s}{ h'(-s)} \right], \nonumber\\
\lim_{a \to s} h'(a) &= \lim_{a \to s} \frac{1}{\delta} \left[ \varepsilon_1 - \frac{(a + s)}{2} \frac{(a - s)}{h(a)} \right] 
\quad \Rightarrow \quad h'(s) = \frac{1}{\delta} \left[ \varepsilon_1 - \frac{s}{ h'(s)} \right]. \nonumber
\end{align}
It follows that
\begin{equation}
\delta (h'(-s))^2 - \varepsilon_1 (h'(-s)) - s = 0, \qquad \delta (h'(s))^2 - \varepsilon_1 (h'(s)) + s = 0. \nonumber
\end{equation}
Then, since $h'(-s) < 0$ and (\ref{h'(a)}) indicates that $h'(a) < \frac{\varepsilon_1}{\delta}$ on $(-s, s)$, we have
\begin{equation}
h'(-s) = \frac{\varepsilon_1 - \sqrt{\varepsilon_1^2 + 4\delta s}}{2\delta}, \qquad h'(s) = \frac{\varepsilon_1 - \sqrt{\varepsilon_1^2 - 4\delta s}}{2\delta}.\nonumber
\end{equation}
Since $\gamma>\frac{1}{2}$, we have 
\begin{eqnarray} \label{less than}
4\delta s \varepsilon_1^2 (1-2\gamma) &<& 16 \delta^2 \gamma^2 s^2 ,\nonumber \\
\varepsilon_1^4 + 4\delta s \varepsilon_1^2 &<& \varepsilon_1^4 + 16\delta^2 \gamma^2 s^2 + 8 \varepsilon_1^2 \delta \gamma s, \nonumber \\
\varepsilon_1 \sqrt{\varepsilon_1^2 + 4\delta s} &<& \varepsilon_1^2 + 4\delta \gamma s, \nonumber \\
-h'(-s) = \frac{\sqrt{\varepsilon_1^2 + 4\delta s} - \varepsilon_1}{2\delta} &<& \frac{2\gamma s}{\varepsilon_1} = -p'(-s) .
\end{eqnarray}
Since $\varepsilon_1^2 >  \frac{4\gamma^2\delta s}{2\gamma-1}$, 
\begin{align}\label{greater than}
4\varepsilon_1^2 s\delta(2\gamma - 1) &> 16\gamma^2 s^2 \delta^2 ,\nonumber\\
\varepsilon_1^4 - 4\varepsilon_1^2 s \delta &> \varepsilon_1^4 + 16\gamma^2 s^2 \delta^2- 8\gamma s \delta \varepsilon_1^2, \nonumber\\
\varepsilon_1 \sqrt{\varepsilon_1^2-4s\delta} &> \varepsilon_1^2-4\gamma s\delta ,\nonumber \\
4\gamma s \delta &> \varepsilon_1^2 - \varepsilon_1 \sqrt{\varepsilon_1^2 - 4s \delta} ,\nonumber\\
p'(s)= \frac{2\gamma s}{\varepsilon_1} &> \frac{\varepsilon_1 - \sqrt{\varepsilon_1^2 - 4s\delta}}{2\delta} = h'(s). 
\end{align}
Now we assume that (\ref{h(a) > p(a)}) is not true. Then, by (\ref{h(s)=0}), (\ref{less than}), (\ref{greater than}),  there exist two points $b$ and $c$ with $-s < b < c < s$ such that
\begin{align*}
h(b) &= p(b), \quad h'(b) - p'(b) \leq 0, \\
h(c) &= p(c), \quad h'(c) - p'(c) \geq 0.
\end{align*}
We then consider a function $g : [-s, s] \to \mathbb{R}$ which is defined as
\begin{equation}
g(a) := \frac{1}{\delta} \left[ \varepsilon_1 - \frac{\varepsilon_1}{2\gamma} \right] - \frac{2\gamma a}{\varepsilon_1}.
\end{equation}
We have
\begin{align*}
g(b)   &= \frac{1}{\delta} \left[ \varepsilon_1 - \frac{\varepsilon_1}{2\gamma} \right] - 2\gamma \frac{b}{\varepsilon_1}
       = \frac{1}{\delta} \left[ \varepsilon_1 - \frac{1}{2p(b)} (b - s)(b + s) \right] - 2\gamma \frac{b}{\varepsilon_1} = h'(b) - p'(b) \le 0,  \\
g(c)   &= \frac{1}{\delta} \left[ \varepsilon_1 - \frac{\varepsilon_1}{2\gamma} \right] - 2\gamma \frac{c}{\varepsilon_1}
       = \frac{1}{\delta} \left[ \varepsilon_1 - \frac{1}{2p(c)} (c - s)(c + s) \right] - 2\gamma \frac{c}{\varepsilon_1} = h'(c) - p'(c) \ge 0. 
\end{align*}
Thus, $g(b) \le 0 \le g(c)$. This is a contradiction to the fact that g is a strictly decreasing function. This completes the proof of \eqref{h(a) > p(a)} and \eqref{main lemma}.
\end{proof}

\begin{lemma}
    The first derivative of the monotone 
    shock $\tilde{u}$ satisfies the following estimates:
    
    \begin{equation} \label{total variation for monotone}
    \int_{\mathbb{R}} |\tilde{u}'(\xi)| \, d\xi = 2s \quad \text{and}  \quad \sup_{\xi \in \mathbb{R}} |\tilde{u}'(\xi)| \le \frac{s^2}{2\varepsilon_1}.
    \end{equation}
    
\end{lemma}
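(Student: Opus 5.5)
The first identity follows directly from monotonicity. Since $\tilde{u}$ is monotonically decreasing from $u_-=s$ to $u_+=-s$ (we work in the normalized setting $\sigma=0$, $u_\pm=\mp s$ used in the preceding lemma), we have $|\tilde{u}'|=-\tilde{u}'$. Therefore
\[
\int_{\mathbb{R}}|\tilde{u}'(\xi)|\,d\xi=-\int_{\mathbb{R}}\tilde{u}'(\xi)\,d\xi=u_--u_+=2s,
\]
using the far-field limits in \eqref{shock equation}.

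For the pointwise bound, I would not argue directly from the ODE \eqref{t00}. Instead I would invoke the upper inequality in \eqref{main lemma}, namely $-\varepsilon_1\tilde{u}'\le \gamma(s-\tilde{u})(\tilde{u}+s)$. Since $\tilde{u}\in[-s,s]$, we have $(s-\tilde{u})(s+\tilde{u})=s^2-\tilde{u}^2\le s^2$, which gives $|\tilde{u}'|\le \gamma s^2/\varepsilon_1$. To reach the constant $\frac{s^2}{2\varepsilon_1}$ one needs $\gamma=\tfrac12$. However, the lemma only holds for $\gamma>\tfrac12$ under the constraint $\varepsilon_1^2>4\gamma^2\delta s/(2\gamma-1)$, and that constraint degenerates as $\gamma\downarrow\tfrac12$. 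So this route does not directly give the stated constant, and I would switch to a sharper argument.

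The sharper argument works at an interior maximum $\xi_0$ of $|\tilde{u}'|$, where $\tilde{u}''(\xi_0)=0$. Evaluating \eqref{t00} with $\sigma=0$ and $u_\pm^2=s^2$ at $\xi_0$ gives
\[
\varepsilon_1\tilde{u}'(\xi_0)=\tfrac12\bigl(\tilde{u}(\xi_0)^2-s^2\bigr),
\]
and hence
\[
|\tilde{u}'(\xi_0)|=\frac{s^2-\tilde{u}(\xi_0)^2}{2\varepsilon_1}\le \frac{s^2}{2\varepsilon_1}.
\]
The supremum is attained at an interior point because $\tilde{u}'\to0$ as $\xi\to\pm\infty$ while $\tilde{u}'\not\equiv 0$. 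This argument needs no assumption on $\gamma$, and I would present it as the proof.

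The main obstacle, though minor, is justifying that the maximum is interior and that $\tilde{u}\in[-s,s]$ there. Both follow from monotonicity of $\tilde{u}$ and the boundary conditions in \eqref{shock equation}.
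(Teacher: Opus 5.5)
Your proof is correct and matches the paper's argument. Monotonicity gives the total variation $2s$, and evaluating the once-integrated profile equation $\tfrac12(\tilde{u}^2-s^2)=\varepsilon_1\tilde{u}'-\delta\tilde{u}''$ at an interior extremum of $\tilde{u}'$ (where $\tilde{u}''=0$) gives the bound $s^2/(2\varepsilon_1)$; your justification that this extremum is attained (since $\tilde{u}'\to0$ at $\pm\infty$) and your phrasing in terms of the maximum of $|\tilde{u}'|$ are slightly more careful than the paper's, while the discarded detour through the $\gamma$-inequality is unnecessary.
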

\begin{proof}
    Since $\tilde{u}$ is monotone, the first equation is trivial. By integrating (\ref{shock equation}) from $-\infty$ to $\x$, we get 
\[\frac{1}{2} (\tilde{u}^2(\x) -s^2) = \varepsilon_1 \tilde{u}'(\x) - \delta \tilde{u}''(\x).\]
Since $\tilde{u}'$ attains maximum when $\tilde{u}''=0$, we get  
\begin{equation}
    \sup_{\xi \in \mathbb{R}} |\tilde{u}'(\xi)| \le \sup_{\xi \in \mathbb{R}} \frac{|\tilde{u}^2(\xi)-s^2|}{2\varepsilon_1} \le \frac{s^2}{2\varepsilon_1}.
\end{equation}

\end{proof}

\subsection{Proof of Theorem \ref{Main theorem for monotone}}
Note that when $u_-=-u_+=s>0$, $\sigma=0$, so $\xi=x$.
We can rewrite (\ref{main equation}) with respect to the following variables:
\[w=(u^{X} - \tilde{u})\circ(\x,y)  , \quad z = (u_{-} - \tilde{u}),\quad W = (u^{X} - \tilde{u})\circ (z,y).\]
Since \(\tilde{u}'< 0\), the change of variable \(\x\mapsto z\in [0,2s]\) satisfying
\begin{equation}
\label{dz}
  \frac{dz}{d\xi} = -\tilde{u}' \quad  
\end{equation}
is well-defined for $\xi\in (-\infty,\infty)$.

Since it holds from (\ref{lipstich}) that
\[\dot{X} = -\frac{1}{4s}\int_{\mathbb{T}^{1}}\int_{0}^{2s}(u(t,\xi +X(t),y) - \tilde{u}(\xi))\, \tilde{u}'(\xi)\,d\xi\, dy.\]
From (\ref{dz}), we get
\begin{equation} \label{X in terms of W bar}
    \dot{X} = \frac{1}{4s}\int_{\mathbb{T}^{1}}\overline{W} \,dy,\qquad \int_{\Omega}(u^{X} - \tilde{u})\,\tilde{u}' \,d\xi\, dy = - \int_{\mathbb{T}^{1}}\overline{W} \,dy, \quad 
\end{equation}
where \(\overline{W}= \int_{0}^{2s}W \,dz\). 

For the second term in (\ref{main equation}),
\beq \label{B bound}
-\frac{1}{2}\int_{\Omega}|u^{X}-\tilde{u}|^2\, \tilde{u}'\,d\xi \, dy 
= \frac{1}{2}\int_{\mathbb{T}^{1}}\int_{0}^{2s}W^{2}\,dz\,dy.
\eeq
From 
(\ref{main lemma}), we get
\begin{equation} \label{lower bound of dz}
 \frac{\lambda_1 z(2s-z)}{\varepsilon_1} \le \frac{dz}{d\xi} \le \frac{\gamma z(2s-z)}{\varepsilon_1}.
\end{equation}
For the third term in (\ref{main equation}), by using (\ref{lower bound of dz}), we get
\begin{align} \label{G bound}
 &\varepsilon_1\int_{\mathbb{T}^{1}}\int_{\mathbb{R}}|(u^{X} - \tilde{u})_{\xi}|^{2}\,d\xi\, dy +\varepsilon_2\int_{\mathbb{T}^{1}}\int_{\mathbb{R}}|(u^{X} - \tilde{u})_{y}|^{2}\,d\xi\, dy  \nonumber \\ &=\varepsilon_1\int_{\mathbb{T}^{1}}\int_{0}^{2s}|\partial_{z}W|^{2}\frac{dz}{d\xi} \,dz\, dy  + \varepsilon_2\int_{\mathbb{T}^{1}}\int_{\mathbb{R}}|\partial_{y}w|^{2}\, d\x \,dy \nonumber\\
&\geq \lambda_1 \int_{\mathbb{T}^{1}}\int_{0}^{2s}z(2s - z)|W_{z}|^{2} \,dz\, dy   + \varepsilon_2 \int_{\mathbb{T}^{1}}\int_{\mathbb{R}}|\partial_{y}w|^{2} \,d\x\, dy.
\end{align} 
Using (\ref{X in terms of W bar}), (\ref{B bound}), (\ref{G bound}), we get
\begin{align}
&
   - \frac{1}{4s} \left(\int_{\Omega}(u^{X} - \tilde{u})\tilde{u}' \,d\xi\, dy \right)^2 - \frac{1}{2}\int_{\Omega}|u^{X}-\tilde{u}|^2\tilde{u}' \,d\xi\,dy- \varepsilon_1 \int_{\Omega}|(u^{X} - \tilde{u})_{\xi}|^{2} \, d\xi \,dy \nonumber \\
   &- \varepsilon_2 \int_{\Omega}|(u^{X} - \tilde{u})_{y}|^{2} \, d\xi \,dy  \nonumber\\
&\leq -\frac{1}{4s}\left(\int_{\mathbb{T}^{1}}\overline{{W}} \,dy\right)^{2}+\frac{1}{2}\int_{\mathbb{T}^{1}}\int_{0}^{2s}W^{2}\,dz\, dy -\lambda_1 \int_{\mathbb{T}^{1}}\int_{0}^{2s}z(2s - z)|W_{z}|^{2} \,dz\, dy  \nonumber \\
& {\color{white}....}-  \varepsilon_2 \int_{\mathbb{T}^{1}}\int_{\mathbb{R}}|\partial_{y}w|^{2} \,d\x\, dy
  \nonumber \\
&= -\frac{1}{4s}\left(\int_{\mathbb{T}^{1}}\overline{{W}}\,dy\right)^{2}+\frac{1}{4s}\int_{\mathbb{T}^{1}}\overline{{W}}^{2}dy +\frac{1}{2}\int_{\mathbb{T}^{1}}\left(\int_{0}^{2s}W^{2}dz-\frac{1}{2s}\overline{{W}}^{2}-\int_{0}^{2s}\frac{1}{2}z(2s-z)|W_{z}|^{2}dz\right)dy  \nonumber\\
& {\color{white}....}+\left(\frac{1}{4}-\lambda_1\right)\int_{\mathbb{T}^{1}} \int_{0}^{2s}z(2s-z)|W_{z}|^{2}\,dz\,\, dy\,  -\varepsilon_2 \int_{\mathbb{T}^{1}}\int_{\mathbb{R}}|\partial_{y}w|^{2} d\x dy.
 \quad  \nonumber 
\end{align}
Note that from (\ref{poincare0}), we have

\[ \int_{0}^{2s}W^2 \,dz- \frac{1}{2s}\overline{{W}}^{2} - \frac{1}{2}\int_{0}^{2s}z(2s - z)|W_{z}|^{2}\,dz\leq 0.\]
Using equation \eqref{G bound}, together with the Poincaré inequality on $\mathbb{T}^1$, we obtain a bound for the right hand side of \eqref{main equation} as follows:

  \begin{align}
  &-\frac{1}{4s} \left(\int_{\Omega}(u^{X} - \tilde{u})\tilde{u}'\,d\xi\, dy \right)^2 
- \frac{1}{2}\int_{\Omega}|u^{X}-\tilde{u}|^2\tilde{u}' \,d\xi\,dy
- \varepsilon_1 \int_{\Omega}|(u^{X} - \tilde{u})_{\xi}|^{2} \, d\xi\, dy \nonumber \\
&\qquad- \varepsilon_2 \int_{\Omega}| (u^{X} - \tilde{u})_{y}|^{2} \, d\xi\, dy \nonumber \\
 &\leq -\frac{1}{4s}\left(\int_{\mathbb{T}^{1}}\overline{W}\,dy\right)^{2}
+\frac{1}{4s}\int_{\mathbb{T}^{1}}\overline{W}^{2}\,dy +\left(\frac{1}{4}-\lambda_1\right)\int_{\mathbb{T}^{1}} \int_{0}^{2s}z(2s-z)|W_{z}|^{2} \,dz \,dy \nonumber \\
&\qquad
-\varepsilon_2 \int_{\mathbb{T}^{1}}\int_{\mathbb{R}}|\partial_{y}w|^{2}\, d\xi\, dy \nonumber \\
&\leq \frac{1}{4s}\int_{\mathbb{T}^{1}} \left(\overline{W} - \int_{\mathbb{T}^{1}} \overline{W}\right)^2 \,dy -\varepsilon_2 \int_{\mathbb{T}^{1}}\int_{\mathbb{R}}|\partial_{y}w|^{2} \,d\x\, dy +\left(\frac{\varepsilon_1}{4\lambda_1}-\varepsilon_1\right)\int_{\mathbb{T}^{1}} \int_{\mathbb{R}}|w_{\xi}|^{2}\,d\xi\, dy
 \quad  \nonumber \\
&\le\frac{1}{16s\pi^2}  \int_{\mathbb{T}^1} |\overline{W}_{y}|^2 dy  -\varepsilon_2 \int_{\mathbb{T}^{1}}\int_{\mathbb{R}}|\partial_{y}w|^{2}\, d\x\, dy +\left(\frac{\varepsilon_1}{4\lambda_1}-\varepsilon_1\right)\int_{\mathbb{T}^{1}} \int_{\mathbb{R}}|w_{\xi}|^{2}\,d\xi\, dy
 \quad  \nonumber\\
&\le \frac{1}{16s\pi^2} \big( \int_{\mathbb{T}^1} \int_{\mathbb{R}}|\partial_{y}{w}|^2 d\xi\, dy \big ) \big( \int_{\mathbb{R}} |\tilde{u}'|^2  d\xi\big) -\varepsilon_2 \int_{\mathbb{T}^{1}}\int_{\mathbb{R}}|\partial_{y}w|^{2} d\x dy +\frac{\varepsilon_1(1-4\lambda_1)}{4\lambda_1}\int_{\mathbb{T}^{1}} \int_{\mathbb{R}}|w_{\xi}|^{2}d\xi dy
 \quad  \nonumber \\
 &\le\frac{\sup_{\mathbb{R}} |\tilde{u}'|}{16s\pi^2} \big( \int_{\mathbb{T}^1} \int_{\mathbb{R}}|\partial_{y}{w}|^2 d\xi dy \big ) \big( \int_{\mathbb{R}} |\tilde{u}'|  d\xi\big) -\varepsilon_2 \int_{\mathbb{T}^{1}}\int_{\mathbb{R}}|\partial_{y}w|^{2} d\x dy +\frac{\varepsilon_1(1-4\lambda_1)}{4\lambda_1}\int_{\mathbb{T}^{1}} \int_{\mathbb{R}}|w_{\xi}|^{2} d\xi dy
 \nonumber \\
&\le \left(\frac{s^2}{16\pi^2\varepsilon_1}- \varepsilon_2\right) \int_{\mathbb{T}^1} \Big(\int_{\mathbb{R}} |\partial_{y}{w}|^2 \,d\x \,dy \Big) +\frac{\varepsilon_1(1-4\lambda_1)}{4\lambda_1}\int_{\mathbb{T}^{1}} \int_{\mathbb{R}}|w_{\xi}|^{2} \,d\xi\, dy \label{less than zero}
\end{align}
where we used \eqref{total variation for monotone} in the last step. From \eqref{mono_ass}, we have \(C^* = 2\varepsilon_2 - \frac{s^2}{8\pi^2 \varepsilon_1} > 0\). Moreover, since \(\lambda_1 > \frac{1}{4}\), it follows that
\(
C_* = 2 - \frac{1}{2\lambda_1} > 0
\). Using \eqref{main equation} and \eqref{less than zero}, we obtain
\begin{align}
\frac{d}{dt} \int_{\Omega} \frac{|u - \tilde{u}^{-X}|^2}{2}\, d\xi \, dy
&+ \frac{C_{*}\varepsilon_1}{2} \int_{\Omega} \left( \partial_x \big(u(t,x,y)-\tilde{u}(x  - X(t))\big) \right)^2 \, dx\,dy \nonumber\\
&+ \frac{C^{*}}{2} \int_{\Omega} \left( \partial_y \big(u(t,x,y)-\tilde{u}(x - X(t))\big) \right)^2 \, dx\,dy \label{final equation monotone case}
\le 0.
\end{align}
By integrating \eqref{final equation monotone case} over the interval [0,T], we obtain \eqref{theorem1}.

Using \eqref{theorem1}, the proof of \eqref{tstabilitymonotone} is analogous to that of Theorem 2.3 in \cite{Hur}. Consequently, \eqref{tstabilitymonotone} implies that 
\begin{equation}\nonumber\label{Xdotestimate}
|\dot{X}(t)|
\leq \| u(t,\cdot,\cdot) - \tilde{u}(\cdot - X(t)) \|_{L^4(\Omega)}
 \|\tilde{u}' \|_{L^\frac{4}{3}(\mathbb{R})}
\;\longrightarrow\; 0
\quad \text{as } t \to \infty,
\end{equation}
since \( \|\tilde{u}'\|_{L^\frac{4}{3}(\mathbb{R})}\) is bounded.
This completes the proof of Theorem \ref{Main theorem for monotone}.


\section{$L^2$-contraction for the non-monotone dispersive shock}
\setcounter{equation}{0}
Throughout this section we 
assume  \eqref{cond1} with \(u_-=-u_+=s>0\), and  $\varepsilon_1 =1$.
Let \(\util\) be the associated non-monotone viscous-dispersive shock of \eqref{shock equation} which connects \(u_-\) and \(u_+\).  As shown in Section 4 of \cite{chen2026uniform}, when \(A=\frac{1}{2}\), we have the following:
\begin{equation} \label{u0-exp}
u_0 \le  1.0601s, \qquad
\frac{u_{i-1}-s}{s-u_i} \ge \r_* \coloneqq 4.64, \qquad
\frac{s-u_i}{u_{i+1}-s} \ge \r^* \coloneqq 4.77,
\end{equation}
for each odd \(i\in\NN\).

\subsection{Key Inequalities}
Let's first review the properties of the traveling wave solution in \cite{chen2026uniform}.
\begin{proposition} [{\cite[Proposition 4.1]{chen2026uniform}}]\label{prop:key} 
The following holds: for each \(i\in\NN\cup\{0\}\) even,
\begin{equation} \label{key:dec}
-\util'(\x) \ge \lbar_i(u_i-\util(\x))(\util(\x)-u_{i-1}), \qquad \forall \x\in(\x_i,\x_{i-1}),
\end{equation}
and for each \(i\in\NN\) odd, 
\begin{equation} \label{key:inc}
\util'(\x) \ge \lbar_i(u_{i-1}-\util(\x))(\util(\x)-u_i), \qquad \forall \x\in(\x_i,\x_{i-1}),
\end{equation}
where \(\x_{-1}=+\infty\) and \(u_{-1}=\util(\x_{-1})=-s\), and \(\lbar_i\) are given by 
\begin{equation} \label{key:l}
\lbar_0 = 0.355, \qquad
\lbar_1 = 9.60, \qquad
\lbar_{2n} = (1.94) \r_*^{2n}, \qquad
\lbar_{2n+1} = (0.51)^{-1} \r_*^{2n+1}.
\end{equation}
\end{proposition}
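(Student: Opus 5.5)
On each monotone piece $(\x_i,\x_{i-1})$ of $\util$ I would prove the quadratic lower bound by the phase-plane comparison used for \eqref{main lemma}, with the profile parametrized by its own value. Fix $i$ even, so $\util$ decreases from $u_i$ to $u_{i-1}$. Set $a=\util$, $h(a)=-\util'$ and $p(a)=\lbar_i(u_i-a)(a-u_{i-1})$. Both functions vanish at the endpoints $a=u_i$ and $a=u_{i-1}$; at the endpoint $-s$ (case $i=0$) this holds because $\util'\to0$. The claim \eqref{key:dec} is then $h\ge p$ on $[u_{i-1},u_i]$. From \eqref{t00} with $\e_1=1$, $\s=0$, $u_\pm=\mp s$ we get $\d\util''=\util'-\frac12(\util^2-s^2)$. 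Hence
\[
h'(a)=\frac{d(-\util')}{d\util}=\frac{1}{\d}\Big[1+\frac{a^2-s^2}{2h(a)}\Big].
\]
For odd $i$ the same formula holds with $h=\util'$ and the sign of the second term flipped.

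The argument proceeds in three steps.

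\emph{Step 1: endpoint slopes.} At an interior extremum $u_i$ we have $h(u_i)=0$ with $u_i\neq\pm s$. The formula then forces $h'\to\pm\infty$ with the favourable sign, so $h>p$ near that endpoint automatically. At the endpoint $-s$ (case $i=0$), L'H\^opital gives $\d h'(-s)^2-h'(-s)-s=0$, exactly as in the monotone lemma. The comparison then needs $|h'(-s)|>|p'(-s)|=\lbar_0(u_0+s)$. Using $u_0\le1.0601s$ and $\d s<\frac12$, this should hold with $\lbar_0=0.355$.

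\emph{Step 2: no crossing in the interior.} Suppose $h<p$ somewhere. Then, as in the proof of \eqref{main lemma}, there are first and last touching points $b<c$ with $h=p$ and opposite inequalities for $h'-p'$. Substituting $h=p$ into the ODE gives an explicit function
\[
g(a)=\frac1\d\Big[1+\frac{a^2-s^2}{2\lbar_i(u_i-a)(a-u_{i-1})}\Big]-p'(a),
\]
and one needs $g(b)$ and $g(c)$ to have incompatible signs. For $i=0$, I would show $g$ is monotone on $[-s,u_0]$ using $u_0\le1.0601s$ and $\frac14<\d s<\frac12$. For $i\ge1$ the term $(a^2-s^2)$ is small, of size $|u_i-s|$ times $2s$. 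Monotonicity of $g$ (or a sign condition on it) then reduces to an inequality between $\lbar_i$, $\d$, and the amplitudes $|u_i-s|$, $|u_{i-1}-s|$.

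\emph{Step 3: scaling of $\lbar_i$.} The geometric ratios \eqref{u0-exp}, from Theorem \ref{thm:shock} with $A=\frac12$, give $|u_{i}-s|\le C\r_*^{-i}s$ and the reverse comparison between consecutive amplitudes. Since the interval length $u_{i-1}-u_i$ shrinks like $\r_*^{-i}$, the constant $\lbar_i$ must grow like $\r_*^{i}$. This explains the forms $1.94\,\r_*^{2n}$ and $0.51^{-1}\r_*^{2n+1}$. The parity split reflects whether the interval lies mostly above or below $s$, and the small cases $i=0,1$ are checked separately with the explicit bounds.

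The main obstacle is Step 2 for general $i$. Near $a=s$ the nonlinear term changes sign inside the interval, so $g$ need not be monotone. I would first try to show instead that $h-p$ satisfies a first-order differential inequality $(h-p)'\ge k(a)(h-p)$ wherever $h$ is close to $p$, which rules out a negative excursion. If that fails, I would fall back on a direct energy estimate of $\util'^2$ along the half-oscillation, using the dissipation identity from \eqref{t00}.
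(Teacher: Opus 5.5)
The paper does not prove this proposition; it quotes it from the cited reference, so your outline has to stand on its own. Comparing $h(a)$ with the quadratic on each monotone piece, as in the proof of \eqref{main lemma}, is a workable framework. As written, however, there is a genuine gap. For $i\ge1$ the no-crossing step is not carried out, and that is infinitely many pieces, precisely where $\bar\lambda_1=9.60$, $1.94\,\rho_*^{2n}$ and $0.51^{-1}\rho_*^{2n+1}$ must be checked. Step 3 is only a heuristic: a bound on the width of a piece does not by itself bound $|\tilde u'|$ from below. The fallbacks you list are not arguments.

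There are also errors in what you did write. On decreasing pieces, with $h=-\tilde u'$, the relation $\delta\tilde u''=\tilde u'-\frac12(\tilde u^2-s^2)$ gives $h'(a)=-\frac1\delta\bigl[1+\frac{a^2-s^2}{2h}\bigr]$. This is the negative of your formula; your odd-piece formula is correct. Step 1 survives, since $|h'(-s)|=\frac{\sqrt{1+4\delta s}-1}{2\delta}>(\sqrt3-1)s>0.355\cdot2.0601\,s\ge\bar\lambda_0(u_0+s)$. But for $i=0$ the correct crossing function
\[
g(a)=-\frac1\delta-\frac{a-s}{2\delta\bar\lambda_0(u_0-a)}-\bar\lambda_0(u_0-s-2a)
\]
is not monotone: $g'(-s)\ge0.71-0.09>0$, while $g\to-\infty$ as $a\to u_0$. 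What works instead is concavity, $g''(a)=-\frac{u_0-s}{\delta\bar\lambda_0(u_0-a)^3}<0$, together with $g(-s)>0$, which is exactly the slope condition at $-s$. Hence $g$ is positive and then negative, which is incompatible with $g(b)\le0\le g(c)$ for $b<c$.

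For $i\ge1$, the difficulty you describe (the sign change of $a^2-s^2$ inside the piece) disappears once you clear the denominator. At a touching point $h=p$, the quantity $2\delta p\,(h-p)'$ equals the cubic
\[
Q_i(a)=s^2-a^2-2p\,(1+\delta p')\quad(i\text{ even}),\qquad Q_i(a)=s^2-a^2+2p\,(1-\delta p')\quad(i\text{ odd}).
\]
At both endpoints $Q_i=s^2-a^2$, so it is positive at the endpoint below $s$ and negative at the one above. Since $h>p$ near both endpoints automatically (square-root behaviour at the extrema), it is enough to show that $Q_i$ is strictly decreasing.

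Put $a=s+A\tau$, where $A$ is the larger of $|u_{i-1}-s|$ and $|u_i-s|$, and $\theta\le\rho_*^{-1}$ is the ratio of the smaller to the larger. Then $p=\bar\lambda_iA^2P(\tau)$ with $P=(\theta-\tau)(1+\tau)$ or $P=(1-\tau)(\tau+\theta)$. Using $P_\tau^2+4P=(1+\theta)^2$, one finds
\[
\frac{d}{d\tau}\Big(\frac{Q_i}{sA}\Big)\le-2+2\Lambda_i(1+\theta)+\delta s\,\Lambda_i^2(1+\theta)^2+\frac{2A}{s},\qquad \Lambda_i:=\frac{\bar\lambda_iA}{s}.
\]
This is negative whenever $\Lambda_i\le0.58$, $\theta\le\rho_*^{-1}$, $A\le0.0601\,s$ and $\delta s<\frac12$.

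This is what the growth $\bar\lambda_i\propto\rho_*^{i}$ is really for. From $u_0\le1.0601s$ and $|u_k-s|\le\rho_*^{-k}(u_0-s)$ one gets $A\le0.0601\,s\,\rho_*^{-(i-1)}$. Hence $\Lambda_1\le9.60\cdot0.0601\approx0.577$, $\Lambda_{2n}\le1.94\cdot0.0601\,\rho_*\approx0.541$ and $\Lambda_{2n+1}\le0.0601\,\rho_*/0.51\approx0.547$. Supplying this, or an equivalent quantitative estimate, is what your Steps 2--3 are missing. With it, your comparison scheme closes for every $i$, and no energy or differential-inequality fallback is needed.
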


We define the interval $J_i:=(\x_i,\x^i)$ with \(\x^i\in (\x_{i-1},\x_{i-2})\) satisfying \(\util(\x^i)=u_i\).
\begin{proposition} [{\cite[Proposition 4.2]{chen2026uniform}}] \label{prop:L2} 
The viscous-dispersive shock profile satisfies the following \(L^2\) estimate:
\begin{equation} \label{L2B}
\frac{1}{\abs{u_i-u_{i-1}}}\int_{J_i} (\util-u_i)^2 d\x
\le
\begin{cases}
0.178, & i=1,\\
0.81(\r_*)^{-i}, & i\ge2 \text{ even},\\
0.82(\r_*)^{-i}, & i\ge3 \text{ odd}.
\end{cases}
\end{equation}
Moreover, let \(\x_s\in(\x_1,\x_0)\) be the unique point satisfying \(\util(\x_s)=s\).
Then the following holds:
\begin{equation} \label{L2B-S}
\int_{-\infty}^{\x_s} (\util-s)^2 d\x \le 0.001s.
\end{equation}
\end{proposition}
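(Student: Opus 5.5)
The plan is to trade the $\x$-integrals for integrals in the value variable $a=\util(\x)$ on each monotone piece of $\util$, and to control the Jacobian $d\x/da=1/\abs{\util'}$ from below on $\abs{\util'}$. By construction $J_i=(\x_i,\x^i)$ splits at $\x_{i-1}$ into two monotone arcs.
\begin{itemize}
\item On $(\x_i,\x_{i-1})$ the profile runs from $u_i$ to $u_{i-1}$.
\item On $(\x_{i-1},\x^i)$ it runs back from $u_{i-1}$ to $u_i$, as part of the monotone arc from $u_{i-1}$ to $u_{i-2}$.
\end{itemize}
On each arc I would write
\[
\int (\util-u_i)^2\,d\x=\int \frac{(a-u_i)^2}{\abs{\util'}}\,da .
\]

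The first lower bound for $\abs{\util'}$ comes from Proposition~\ref{prop:key}. On the first arc it is $\lbar_i\abs{a-u_i}\abs{a-u_{i-1}}$, and on the second arc it is $\lbar_{i-1}\abs{a-u_{i-1}}\abs{a-u_{i-2}}$. Away from the turning value $u_{i-1}$ this makes the integrand at most $\abs{a-u_i}/(\lbar_i\abs{a-u_{i-1}})$, resp.\ a similar expression, which is integrable. The catch is that this bound vanishes only linearly at $a=u_{i-1}$, so the resulting integral diverges logarithmically there. Near the extremum a second bound is needed.

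That second bound is the step I expect to be the main obstacle. Near $a=u_{i-1}$ I would set $h(a)=\util'$ and use the integrated profile equation \eqref{t00}, which with $\e_1=1$, $\s=0$ reads
\[
\d\util''=\util'-\tfrac12(\util^2-s^2).
\]
In terms of $h$ this becomes
\[
\tfrac{\d}{2}\frac{d(h^2)}{da}=h-\tfrac12(a^2-s^2).
\]
Integrating from $u_{i-1}$, where $h=0$, gives $h^2\ge c_{i-1}\abs{a-u_{i-1}}$. The constant is $c_{i-1}\approx \abs{u_{i-1}^2-s^2}/\d$, up to a correction from the dissipative term $h$, which has a definite sign on each arc. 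The bounds $u_0\le 1.0601s$ and the decay ratios $\r_*,\r^*$ in \eqref{u0-exp} keep $\abs{u_{i-1}^2-s^2}$ comparable to $s\abs{u_{i-1}-s}$. With this square-root lower bound the integrand becomes $O(\abs{a-u_{i-1}}^{-1/2})$ near the turning value, which is integrable.

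I would then split the $a$-range at a fixed fraction of $\abs{u_i-u_{i-1}}$ and use the Riccati-type bound of Proposition~\ref{prop:key} on the outer part and the square-root bound on the inner part. Each arc contributes $C\,\abs{u_i-u_{i-1}}^2/(\lbar\abs{u_i-u_{i-1}})$. Dividing by $\abs{u_i-u_{i-1}}$ and inserting $\lbar_{2n}=1.94\,\r_*^{2n}$, $\lbar_{2n+1}=\r_*^{2n+1}/0.51$, together with $\abs{u_{i-1}-u_{i-2}}\lesssim\abs{u_i-u_{i-1}}\r$, should give a bound of the form $C_{\mathrm{even/odd}}\r_*^{-i}$. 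The numerical constants $0.81$ and $0.82$ would then be fixed by optimizing the split point. The case $i=1$ uses the explicit values $\lbar_0=0.355$ and $\lbar_1=9.60$ and $u_0\le1.0601s$ to reach $0.178$.

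For \eqref{L2B-S} I would decompose $(-\infty,\x_s)$ into the intervals $J_i$, $i\ge1$, where $(\x_1,\x_s)$ is covered by $J_1$ or handled the same way. On each piece I would bound $(\util-s)^2\le 2(\util-u_i)^2+2(u_i-s)^2$, or directly redo the change of variables centred at $s$, and apply \eqref{L2B}. The amplitudes satisfy $\abs{u_i-s}\le (u_0-s)\r^{-i}$, which is small because $u_0-s\le0.0601s$, so the pieces form a geometric series. Summing with ratio $\r_*^{-1}\approx0.216$ should yield the total $\le0.001s$. The same turning-point treatment is needed again on each piece.
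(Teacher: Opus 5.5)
The paper gives no proof of this proposition. It is quoted from \cite{chen2026uniform}, so your sketch has to stand on its own, and it does not yet. You locate the difficulty correctly: the bound of Proposition~\ref{prop:key} vanishes only linearly at the turning value $u_{i-1}$. But the statement \emph{is} its constants, and these are used verbatim downstream (e.g.\ $(0.81)\rho_*^{-2}C_2<0.63$ in the appendix). ``Fix them by optimizing the split point'' is not a derivation.

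The turning-point step is also harder than you indicate. Set $E=\frac{\delta}{2}(\tilde u')^2+V(\tilde u)$ with $V'(a)=\frac12(a^2-s^2)$; the profile equation gives $E'=(\tilde u')^2\ge 0$.
\begin{itemize}
\item On the outgoing arc $(\xi_{i-1},\xi^i)$ you get $\frac{\delta}{2}(\tilde u')^2\ge V(u_{i-1})-V(\tilde u)$ for free, and this alone controls that whole arc.
\item On the incoming arc $(\xi_i,\xi_{i-1})$ the dissipative term has the \emph{wrong} sign: $\frac{\delta}{2}\tilde u'(\xi)^2=V(u_{i-1})-V(\tilde u(\xi))-\int_\xi^{\xi_{i-1}}\tilde u'(\eta)^2\,d\eta$.
\end{itemize}
A lower bound on the incoming arc therefore needs an a priori upper bound on $|\tilde u'|$. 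Even with the natural one, $\frac{\delta}{2}(\tilde u')^2\le V(u_{i-1})-V(\tilde u)$, the correction overtakes the main term once $|\tilde u-u_{i-1}|$ reaches roughly a quarter of $|u_{i-1}-s|$. In the harmonic approximation $\tilde u-s=(u_{i-1}-s)\cos\phi$, its relative size is $(\phi-\sin\phi\cos\phi)/(\sqrt{\delta s}\,\sin^2\phi)$, which reaches $1$ near $\phi\approx 0.7$ as $\delta s\downarrow\frac14$.

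So the inner region must be small. If it is $|\tilde u-u_{i-1}|<\theta|u_i-u_{i-1}|$, Proposition~\ref{prop:key} on the outer part of the incoming arc already costs $\bar\lambda_i^{-1}(\ln\theta^{-1}-1+\theta)$ after normalization. For even $i$ and $\theta=\frac1{10}$ this is about $0.72\rho_*^{-i}$. The total budget is $0.81\rho_*^{-i}$, and it must also absorb the inner part and the outgoing arc. Rough estimates suggest $0.81$, $0.82$ and $0.178$ are tight at best for your scheme; this has to be computed, not asserted.

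Two smaller points. The inner contributions scale like $\sqrt{\delta/s}\,|u_{i-1}-s|$, so they need the amplitude bound $|u_{i-1}-s|\le (u_0-s)\rho_*^{-(i-1)}$, not $\bar\lambda$. And Theorem~\ref{thm:shock} only bounds amplitude ratios from below, so $|u_{i-1}-u_{i-2}|\lesssim\rho|u_i-u_{i-1}|$ is not available. The lower bound on $|a-u_{i-2}|$ that you actually need is available.

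The reduction of \eqref{L2B-S} to \eqref{L2B} fails outright.
\begin{itemize}
\item The case $i=1$ of \eqref{L2B} only gives $\int_{J_1}(\tilde u-u_1)^2\,d\xi\le 0.178(u_0-u_1)\le 0.178\cdot 0.0601(1+\rho_*^{-1})s\approx 0.013s$. That is thirteen times the target.
\item Already for $i=2$, the splitting $(\tilde u-s)^2\le 2(\tilde u-u_i)^2+2(u_i-s)^2$ together with the available bound on $u_2-u_1$ yields only $2(0.81)\rho_*^{-2}(u_2-u_1)\approx 0.0012s$.
\item The companion term $2(u_i-s)^2|J_i|$ needs the lengths $|J_i|$, which your tools do not control. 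Proposition~\ref{prop:key} vanishes linearly at both ends of each monotone arc, and your square-root bound is set up only at $u_{i-1}$.
\end{itemize}
So \eqref{L2B-S} has to be proved directly with the weight $(\tilde u-s)^2$. This weight is nonzero at every turning value, so you need square-root bounds at both ends of each monotone arc in $(-\infty,\xi_s)$. For example, on $(\xi_1,\xi_s)$ the outgoing bound $\frac{\delta}{2}(\tilde u')^2\ge V(u_1)-V(\tilde u)$ gives a contribution of order $\sqrt{\delta/s}\,(s-u_1)^2\sim 10^{-4}s$. This is the ``centred at $s$'' alternative you mention but do not carry out.
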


Using these estimates, we can obtain some useful estimates on $\tilde{u}'$.

\begin{proposition}
The first derivative of the viscous-dispersive shock profile is bounded and satisfies the following $L^1$ estimate:
\begin{equation} \label{total variation}
    \int_{\mathbb{R}} |\tilde{u}'(\xi)| \, d\xi \le 2.034s,  \quad and  \quad \sup_{\xi \in \mathbb{R}} |\tilde{u}'(\xi)| \le 0.5 s^2.
\end{equation}
\end{proposition}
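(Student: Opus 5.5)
The plan is to treat the two bounds in \eqref{total variation} separately. Both use only the integrated profile equation \eqref{t00} and the oscillation structure from Theorem~\ref{thm:shock} and \eqref{u0-exp}. Throughout, $\e_1=1$, $\s=0$ and $u_\pm=\mp s$.

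\emph{Sup bound.} Integrating \eqref{shock equation} from $-\infty$, as in the monotone case, gives $\frac12(\util^2-s^2)=\util'-\d\util''$. Since $\util'\to0$ at $\pm\infty$, either $\util'\equiv0$ or $|\util'|$ attains its supremum at an interior point $\x_*$ with $\util''(\x_*)=0$. At such a point
\[
|\util'(\x_*)|=\tfrac12\,|\util(\x_*)^2-s^2| .
\]
It therefore suffices to bound the range of $\util$. The profile increases monotonically from $-s$ (at $+\infty$) up to its global maximum $u_0$. To the left of $\x_0$ it oscillates around $s$, and the minima $u_i$ ($i$ odd) stay above $-s$; this follows from the phase portrait, or from the decay \eqref{inc-decay}–\eqref{dec-decay}, since $s-u_1\le (u_0-s)/\r_*$ is tiny. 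Hence $\util\in[-s,u_0]$. For $\util\in[-s,s]$ we have $|\util^2-s^2|\le s^2$. For $\util\in(s,u_0]$ we have $\util^2-s^2\le u_0^2-s^2\le (1.0601^2-1)s^2<s^2$. This gives $\sup|\util'|\le 0.5s^2$.

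\emph{$L^1$ bound.} Since $\util$ is piecewise monotone between consecutive extrema, its total variation is a telescoping sum over monotone pieces:
\[
\int_\RR|\util'|\,d\x=(u_0+s)+\sum_{i\ge1}|u_{i-1}-u_i| = 2s+2\sum_{i\ge0}|u_i-s|.
\]
The second equality holds because each extremum, beyond the basic rise from $-s$ to $s$, is overshot and returned from once. Writing $d_0=u_0-s$, the ratios in \eqref{u0-exp} give $|u_i-s|\le d_0(\r_*\r^*)^{-k}$ or $d_0\r_*^{-1}(\r_*\r^*)^{-k}$ for odd and even indices. Summing these two geometric series yields
\[
\int_\RR|\util'|\,d\x\le 2s+2d_0\Big(1+\r_*^{-1}\Big)\Big(1-(\r_*\r^*)^{-1}\Big)^{-1}.
\]

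\emph{Main obstacle.} The crude bound $d_0\le0.0601s$ from \eqref{u0-exp} gives only about $2.15s$. So the constant $2.034$ needs $d_0\lesssim 0.014s$, and a sharper overshoot estimate is required. I would try first to extract it from the refined analysis in Section~4 of \cite{chen2026uniform} under \eqref{cond1_2}, where the bound is obtained via the phase-plane comparison used in Proposition~\ref{prop:key}. Failing that, I would use the $L^2$ information \eqref{L2B-S}, $\int_{-\infty}^{\x_s}(\util-s)^2\le 0.001s$. The idea is to combine it with the slope control $|\util'|\le$ (a quadratic in $\util$) from \eqref{key:dec}–\eqref{key:inc}. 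This bounds the height of the first overshoot hump in terms of its $L^2$ mass, since a hump of height $d_0$ has width bounded below by the slope bound. That forces $d_0$ to be of order $0.01s$, and the geometric series then closes at $2.034s$.
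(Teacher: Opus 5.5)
Your sup bound is correct and is the paper's argument. Your check that $\tilde u\in[-s,u_0]$ with $u_0^2-s^2<s^2$ fills a step the paper skips. For the $L^1$ bound you use the same telescoping identity $\int_{\mathbb{R}}|\tilde u'|\,d\xi=2s+2\sum_{i\ge0}|u_i-s|$ as the paper. (Your two geometric bounds belong to even $i=2k$ and odd $i=2k+1$ respectively, not the reverse, but the summed formula is right.) The genuine gap is that the constant $2.034$ is never obtained: your last paragraph is only a plan, and the plan cannot work.

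Your diagnosis of the obstacle is correct, and it exposes a slip in the paper. The paper's proof reaches $2.034$ by writing $\sum_{i\ge0}\rho_*^{-i}$ as $\frac{1}{\rho_*-1}$. With the correct value $\frac{\rho_*}{\rho_*-1}\approx1.275$, its argument gives only $2s+0.1202s\cdot1.275\approx2.153s$, which is your $2.15s$.

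The overshoot bound $u_0-s\lesssim0.0134s$ you would need fails near the top of the range. Set $U=\tilde u/s$, $Q=-\tilde u'/s^2$ and $k=\delta s$. The decreasing branch on $(\xi_0,\infty)$ then solves $kQ\,\frac{dQ}{dU}=\frac{1}{2}(1-U^2)-Q$ with $Q(-1)=0$ and $Q>0$. A numerical integration of this equation at $k=\frac{1}{2}$ (not a proof) gives $Q\approx0.063$ at $U=1$, and then $u_0-s\approx0.02s$. Hence $\int_{\mathbb{R}}|\tilde u'|\,d\xi\ge 2s+2(u_0-s)\approx2.04s$ for $\delta s$ close to $\frac{1}{2}$. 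If this is right, no sharper overshoot estimate can deliver $2.034s$ uniformly on $\frac{1}{4}<\delta s<\frac{1}{2}$. That covers estimates taken from \cite{chen2026uniform}, and also your $L^2$ route through \eqref{L2B-S}: the first hump contributes only about $5\times10^{-4}s$ there, so that bound is compatible with an overshoot of this size.

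What your series actually proves is, for instance, $\int_{\mathbb{R}}|\tilde u'|\,d\xi\le2.16s$. That is enough once the constant is propagated. It enters Theorem \ref{main theorem for non monotone} only through $C^{**}=2\varepsilon_2-\frac{2.16}{8\pi^2}Ms^2$. The smallness condition \eqref{s_def} therefore becomes roughly $s<\pi\sqrt{7.4\,\varepsilon_2/M}$ instead of $s<\pi\sqrt{7.86\,\varepsilon_2/M}$.
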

\begin{proof}
Let $u_{-1}=-s$, then:
 \[
\begin{aligned}
\int_{\mathbb{R}} |\tilde{u}'(\xi)| \, d\xi &= (u_0 + s) + (u_0 - u_1) + (u_2 - u_1) + (u_2 - u_3) + (u_4 - u_3) + \cdots \\
&= \lim_{n \to \infty} \left(\sum_{i=0}^n (u_{2i}- u_{2i-1}) + \sum_{i=0}^n (u_{2i} - u_{2i+1})\right) \\
&\leq  2\lim_{n \to \infty}\left( \sum_{i=0}^{n+1} |u_{2i} - s| + |u_{2i-1} - s|\right) -2s \\
&= 2 \sum_{i=0}^\infty |u_{i} - s| + 2s \\
&\le 2 |u_0-s| \sum_{i=0}^\infty \frac{1}{\rho_*^i} +2s\\
& \le 0.1202 s \frac{ 1}{\rho_*-1} +2s \le 2.034 s ,
\end{aligned}
\]
where we used Theorem \ref{thm:shock} to get $|u_i-s| \le \frac{1}{\rho_*^i} |u_0-s|, \ \  \forall i \in \mathbb{N}$.

By integrating (\ref{shock equation}) from $-\infty$ to $\x$, we get 
\[\frac{1}{2} (\tilde{u}^2(\x) -s^2) = \varepsilon_1 \tilde{u}'(\x) - \delta \tilde{u}''(\x)\].
Since $\tilde{u}'$ attains a maximum when $\tilde{u}''=0$ and $\varepsilon_1=1$, we get 
\begin{equation}\nonumber
    \sup_{\xi \in \mathbb{R}} |\tilde{u}'(\xi)| \le \sup_{\xi \in \mathbb{R}} \frac{|\tilde{u}^2(\xi)-s^2|}{2}  \le \frac{1}{2} s^2.
\end{equation}

\end{proof}

\begin{theorem}\label{induction step thm}
Under the assumption in Theorem \ref{main theorem for non monotone}, we have
\begin{equation} \label{induction step}
       -\frac{M}{2s} \int_{\mathbb{T}^1}\left( \int_{\mathbb{R}} w \tilde{u}' \,d\xi \right)^2 \,dy- \frac{1}{2} \int_{\mathbb{T}^1} \int_{\mathbb{R}} w^2 \tilde{u}' \,d\xi\, dy - \int_{\mathbb{T}^1}\int_{\mathbb{R}} (w_{\xi})^2 \,d\xi \,dy \leq - \frac{1}{10}\int_{\mathbb{T}^1}\int_{\mathbb{R}} (w_{\xi})^2 \,d\xi \,dy,
\end{equation}
where $w=u^X-\tilde{u}$ and $M=\frac{4}{3}$.
\end{theorem}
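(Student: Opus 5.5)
The inequality \eqref{induction step} contains no $y$-derivatives, and each term is an integral over $\mathbb{T}^1$ of a quantity that involves only the one-dimensional slice $w(\cdot,y)$. I will therefore prove, for a.e. fixed $y\in\mathbb{T}^1$, the one-dimensional inequality
\begin{equation*}
-\frac{M}{2s}\Big(\int_{\mathbb{R}} w\,\tilde{u}'\,d\xi\Big)^2-\frac12\int_{\mathbb{R}} w^2\tilde{u}'\,d\xi-\frac{9}{10}\int_{\mathbb{R}} w_\xi^2\,d\xi\le 0,
\end{equation*}
and then integrate it in $y$. Since $w(\cdot,y)\in H^1(\mathbb{R})$ for a.e. $y$ (because $u\in X_T$), this is exactly the kind of functional inequality established for the KdV--Burgers equation in \cite{chen2026uniform}. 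The plan is to reproduce that one-dimensional argument with the same constants, $M=\frac43$ and dissipation fraction $\frac{9}{10}$ (equivalently a remaining good term $\frac1{10}$). The argument uses Propositions \ref{prop:key} and \ref{prop:L2} and the bounds \eqref{u0-exp}.

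For the one-dimensional inequality I will split $\mathbb{R}$ into the monotone pieces $(\xi_i,\xi_{i-1})$. On the decreasing piece $(\xi_0,\infty)$, where $\tilde{u}$ runs from $u_0$ down to $-s$, the variable $z=u_0-\tilde{u}$ and \eqref{key:dec} give $dz/d\xi\ge \bar\lambda_0 z(u_0+s-z)$. Lemma \ref{KV} then handles this piece in the same way as in the monotone case. The mean term it produces is absorbed by the shift term $-\frac{M}{2s}(\cdot)^2$, and the choice $M=\frac43>1$ compensates for the fact that $\bar\lambda_0=0.355$ is larger than $\frac14$ by only a small margin, together with the overshoot $u_0\le 1.0601s$.

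On the oscillatory region $(-\infty,\xi_0)$, the positive part of $-\tilde{u}'$ lives on the decreasing pieces, and these are where the bad term $-\frac12\int w^2\tilde{u}'$ is positive. On each decreasing piece $(\xi_i,\xi_{i-1})$ with $i$ even, I bound $\int w^2|\tilde{u}'|$ by $|u_i-u_{i-1}|\sup|w|^2$. I then control $w$ near $\xi_i$ through the value of $w$ at a reference point combined with $\int_{J_i}(\tilde{u}-u_i)^2$ from \eqref{L2B}. The resulting terms form a geometric series, since $\bar\lambda_i$ grows like $\rho_*^i$ while the amplitudes decay like $\rho_*^{-i}$. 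The increasing pieces give a favorable sign.

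I expect the main obstacle to be gluing the pieces together: the contribution on the oscillatory tail has to be absorbed with the explicit numerical constants while leaving the margin $\frac1{10}$. For this I would follow \cite{chen2026uniform} directly, using \eqref{L2B-S} for the region $\xi<\xi_s$ and the weighted Poincaré inequality on the main decreasing layer. Once the one-dimensional inequality holds slice by slice, integrating over $\mathbb{T}^1$ gives \eqref{induction step}. Note that the shift term in \eqref{induction step} is the average over $y$ of squares rather than the square of the average, so it is stronger than the term appearing in \eqref{main equation}. The gap between the two is the variance $\int_{\mathbb{T}^1}(\overline{W}-\int\overline{W})^2$, which is handled afterwards by the Poincaré inequality on $\mathbb{T}^1$ together with \eqref{total variation}. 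That later step is not part of this theorem.
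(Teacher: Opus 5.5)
\textbf{Verdict: gap.} The reduction to slices is correct, but the one-dimensional argument you sketch for the oscillatory tail does not close and is not the mechanism the paper (or \cite{chen2026uniform}) uses.

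\textbf{What is right.} The appendix does exactly your reduction. Every step there (the Cauchy--Schwarz bound \eqref{transfer0}, Jensen on a monotone piece, the combination $-Aa^2-Cb^2\le-\frac{AC}{A+C}(a+b)^2$, Lemma \ref{KV}) is done for fixed $y$ and then integrated over $\mathbb{T}^1$. This works because the shift term enters as $\int_{\mathbb{T}^1}(\int_{\RR}w\util'\,d\x)^2\,dy$. Your remark that the variance is handled afterwards, in the proof of Theorem \ref{main theorem for non monotone}, is also correct. So simply quoting the one-dimensional inequality of \cite{chen2026uniform} slice by slice would reproduce the paper's proof.

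\textbf{The gap.} You bound $\int_{\x_i}^{\x_{i-1}}w^2\abs{\util'}\,d\x$ by $\abs{u_i-u_{i-1}}\sup\abs{w}^2$. That needs a bound on $\abs{w}$ at some reference point in terms of quantities present in the inequality, and nothing in your plan supplies one.
\begin{itemize}
\item The diffusion only controls increments of $w$. A wide bump $w=\phi(\x/L)$ covering many oscillations has $\int w_\x^2\,d\x=O(1/L)$.
\item The shift term controls a single global weighted average, which says nothing about $w$ far into the tail.
\item Geometric decay of the amplitudes is useless until $\sup\abs{w}$ on the tail is tied to a good term. The only good terms there are the $\Hcal_i$ with $i$ odd, plus whatever is extracted from the main layer. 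You note their sign but never spend them.
\item \eqref{L2B} gives no pointwise information on $w$.
\item Turning a reference value into a supremum costs $\abs{u_i-u_{i-1}}$ times the spatial length of the pieces in diffusion. None of the paper's estimates bound these lengths; with $\e_1=1$ the half-period $2\pi\d/\sqrt{4\d s-1}$ is not even bounded under \eqref{cond1_2}.
\item $\lbar_i$ is usable only through Lemma \ref{KV}, not through a sup bound.
\end{itemize}

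\textbf{The missing idea: propagate good terms leftward through squared means.}
\begin{itemize}
\item \emph{Main layer.} Lemma \ref{KV} is applied with coefficient $\frac{7}{12}$ instead of $\frac12$. This is affordable since $\frac{7}{24}\lbar_0^{-1}<0.83$. After cancelling $\Hcal_0$, a seed $\frac{1}{12}\int_{\x_0}^{\infty}w^2\util'\,d\x\le0$ survives. The choice $M=\frac43$ pays for replacing the global mean by the mean over $(\x_0,\infty)$, using \eqref{L2B-S} on $(-\infty,\x_s)$ and $\frac1{15}\Hcal_1$ on $(\x_s,\x_0)$.
\item \emph{Jensen step.} Each good term is restricted to the subinterval where $\util$ sweeps exactly the range of the next piece, e.g. $(\x_i,\x^{i+1})$. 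Jensen then turns it into a squared mean.
\item \emph{Transfer step (the real role of \eqref{L2B}).} Since $\util-u_i$ vanishes at both ends of $J_i$, we have $\int_{J_i}w\util'\,d\x=\int_{J_i}w_\x(\util-u_i)\,d\x$. Hence \eqref{L2B} bounds the squared mean over $J_i$ by diffusion.
\item \emph{Closing step.} Combining the two gives the squared mean over the whole next piece. Lemma \ref{KV} with Proposition \ref{prop:key} then converts it into $\int w^2\,dz$.
\item \emph{Why the seed is needed.} The combination always loses, since $\frac{AC}{A+C}<A$. In this scheme an increasing piece carrying only the $\frac12$ of $\Hcal_i$ could never produce the $\frac12$ needed to cancel $\Hcal_{i+1}$. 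This is why one uses the seed, the coefficients $\frac12+a_i$ and the constants $C_i$ of \eqref{seq}. The diffusion cost $(0.81)\r_*^{-(i+1)}C_{i+1}$ stays below $0.63$, because $C_{i+1}$ grows like $2^{i}$ while $\r_*^{-(i+1)}$ decays faster.
\end{itemize}
That induction, not a sup-norm bound, is what you should carry out (or cite) slice by slice.
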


For the proof of the theorem,
we follow the same induction process as in Section 4 of \cite{chen2026uniform}. You can find it in the Appendix~\ref{app}.

\subsection{Proof of Theorem \ref{main theorem for non monotone}}
From equation (\ref{main equation}), we aim to show 
    \begin{eqnarray} 
        \frac{-M}{2s} \left(\int_{\Omega}(u^{X} - \tilde{u})\tilde{u}'\,d\xi\, dy \right)^2 - \frac{1}{2}\int_{\Omega}|u^{X}-\tilde{u}|^2\tilde{u}' \,d\xi\,dy- \int_{\Omega}|(u^{X} -\tilde{u})_\xi|^{2} \ \,d\xi\, dy \nonumber \\ 
        - \e_2 \int_{\Omega}|(u^{X} -\tilde{u})_y|^{2} \ \,d\xi\, dy   \le 0.\nonumber
    \end{eqnarray}
    for $M=\frac{4}{3}$.
    
Using (\ref{induction step}) and  Poincaré inequality on $\mathbb{T}^1$, we have 
\begin{eqnarray} 
          &&   \frac{-M}{2s} \left(\int_{\Omega}w\tilde{u}'\,d\xi\, dy \right)^2 - \frac{1}{2}\int_{\Omega}w^2\tilde{u}' \,d\xi\,dy- \int_{\Omega}|w_\xi|^{2} \,d\xi \,dy- \e_2 \int_{\Omega}|w_y|^{2} \,d\xi \,dy   \nonumber \\
        &\le& \frac{-M}{2s} \left(\int_{\mathbb{T}^1}\int_{\mathbb{R}}w\tilde{u}'\,d\xi \,dy \right)^2 + \frac{M}{2s} \int_{\mathbb{T}^1}\left( \int_{\mathbb{R}} w \tilde{u}' \,d\xi \right)^2 \,dy -\frac{1}{10}\int_{\mathbb{T}^1}\int_{\mathbb{R}}|w_{\xi}|^{2} \,d\xi \,dy - \e_2\int_{\mathbb{T}^1}\int_{\mathbb{R}}|w_y|^{2} \,d\xi \,dy \nonumber \\ 
        &=& \frac{M}{2s}\int_{\mathbb{T}^1} \left( \int_{\mathbb{R}}w\tilde{u}'\,d\xi- \int_{\mathbb{T}^1} \int_{\mathbb{R}}w\tilde{u}'\,d\xi \,dy\right ) ^2 \,dy-\frac{1}{10}\int_{\mathbb{T}^1}\int_{\mathbb{R}}|w_{\xi}|^{2} \,d\xi \,dy - \e_2\int_{\mathbb{T}^1}\int_{\mathbb{R}}|w_y|^{2} \,d\xi \,dy\nonumber  \\ 
        &\le& \frac{M}{8s\pi^2} \int_{\mathbb{T}^1}\left(\int_{\mathbb{R}}w_y \tilde{u}' \,d\xi \right)^2 \,dy -\e_2\int_{\mathbb{T}^1}\int_{\mathbb{R}}|w_y|^{2}\, d\xi \,dy-\frac{1}{10}\int_{\mathbb{T}^1}\int_{\mathbb{R}}|w_\x|^{2} \,d\xi \,dy \nonumber   \\ 
        &\le& \frac{M}{8s\pi^2} \int_{\mathbb{T}^1} \left(\int_{\mathbb{R}}|w_y|^{2} \,d\xi \right) \left( \int_{\mathbb{R}} |\tilde{u}'|^2 \,d\xi \right ) \,dy - \e_2\int_{\mathbb{T}^1}\int_{\mathbb{R}}|w_y|^{2} \,d\xi \,dy -\frac{1}{10}\int_{\mathbb{T}^1}\int_{\mathbb{R}}|w_{\xi}|^{2} \,d\xi\, dy \nonumber  \\
        &\le& \frac{M}{8s\pi^2} \sup_\mathbb{R} |\tilde{u}'| \left(\int_{\mathbb{R}} |\tilde{u}'| \,d\xi \right) \left( \int_{\mathbb{T}^1}\int_{\mathbb{R}}|w_y|^{2} \,d\xi \,dy \right ) - \e_2\int_{\mathbb{T}^1}\int_{\mathbb{R}}|w_y|^{2} \,d\xi \,dy -\frac{1}{10}\int_{\mathbb{T}^1}\int_{\mathbb{R}}|w_{\xi}|^{2} \,d\xi \,dy \nonumber  \\
        &\le& \left( \frac{2.034 }{16\pi^2}  Ms^2  -\e_2 \right ) \int_{\mathbb{T}^1}\int_{\mathbb{R}}|w_y|^{2} \,d\xi \,dy -\frac{1}{10}\int_{\mathbb{T}^1}\int_{\mathbb{R}}|w_{\xi}|^{2} \,d\xi\, dy \label{less than zero nonmonotone},
    \end{eqnarray}
    where we used \eqref{total variation} in the last step.
    
Using \eqref{s_def}, we have \(C^{**}=2\e_2-\frac{2.034 }{8\pi^2}  Ms^2  >0\). Thus, using \eqref{main equation} and \eqref{less than zero nonmonotone}, we obtain 
\begin{align}
\frac{d}{dt} \int_{\Omega} \frac{|u - \tilde{u}^{-X}|^2}{2}\, d\xi \, dy
&+ \frac{1}{10} \int_{\Omega} \left| \partial_x \big(u(t,x,y)-\tilde{u}(x - X(t))\big) \right|^2 \, dx\,dy \nonumber \\
&+ \frac{C^{**}}{2} \int_{\Omega} \left| \partial_y \big(u(t,x,y)-\tilde{u}(x  - X(t))\big) \right|^2 \, dx\,dy \le 0. \label{last equation non monotone}
\end{align}
By integrating \eqref{last equation non monotone} over the interval [0,T], we obtain \eqref{theorem2}.

Using \eqref{theorem2}, the proof of \eqref{tstabiltynonmonotone} is analogous to that of Theorem 2.3 in \cite{Hur}. Consequently, \eqref{tstabiltynonmonotone} implies that
\begin{equation}\nonumber
\label{Xdot_estimate}
|\dot{X}(t)|
\leq \| u(t,\cdot,\cdot) - \tilde{u}(\cdot - X(t)) \|_{L^4(\Omega)}
 \|\tilde{u}'(\xi) \|_{L^\frac{4}{3}(\mathbb{R})}
\;\longrightarrow\; 0
\quad \text{as } t \to \infty,
\end{equation}
since \( \|\tilde{u}'(\xi) \|_{L^{\frac{4}{3}}(\mathbb{R})}\) is bounded. This completes the proof of Theorem \ref{main theorem for non monotone}.

\section{Global existence of solutions}
 This section is devoted to the global existence of solutions to \eqref{KP main equation} in the class $X_T$. 
We consider solutions that have different asymptotic states as $|x| \to \infty$. 
Throughout this section, the coefficients $\varepsilon_1$, $\varepsilon_2$, and $\delta$ do not play a role in the analysis; hence, for simplicity, in this section, we set $$\varepsilon_1 = \delta = \e_2 =1.$$ The proof for the case with other positive constants $\varepsilon_1$, $\e_2$ and $\delta$, is very similar and is omitted.

Assume the initial data $u^0$ satisfies that there exists a monotonically decreasing function $f(x)$ with the same far-field values as $u^0$ when $x\rightarrow \pm \infty$, such that $u^0 - f \in H^s(\Omega)$ for any $s >1$, \(u^0(x,y)=u^0(x,y+1), \forall x\in \mathbb{R},\ y\in \mathbb{T}^1\), and $\int_{\mathbb{R}} (u^0(x,y)-f(x))\,dx = 0$, $\forall y \in \mathbb{T}^1$.
Our goal is to establish the global existence of solutions to \eqref{KP main equation} such that, for any $T > 0$,
\[
u - f \in \Scal_T,
\]
where
\[
\Scal_T :=
\left\{
g : \mathbb{R}_+ \times \Omega \to \mathbb{R}
\;\middle|\;
\begin{aligned}
&g \in C([0,T];H^1(\Omega)) \cap L^2(0,T;H^2(\Omega)), \\
&\int_{\mathbb{R}} g(t,x,y)\,dx = 0,
\quad \forall\, t \in [0,T], \ \text{a.e. } y \in \mathbb{T}^1
\end{aligned}
\right\}.
\]
Observe that the condition $u - f \in \Scal_T$ is equivalent to $u \in X_T$. 
The global existence result is obtained by combining local well-posedness with a suitable priori estimates via a standard continuation argument. 

We begin by stating the local existence lemma.
\begin{lemma}\label{local existence}
Let $f\in C^\infty(\mathbb{R})$ be a smooth, monotone function connecting the end states from $u_-$ to $u_+$. Then, for any $M_0 > 0$, there exists a time $T > 0$ such that if the initial data $u^0(x,y)$ satisfies the following conditions:

\begin{equation} 
\begin{cases}
    \|u^0 - f\|_{H^s(\Omega)} \le M_0, \\
    u^0(x,y+1) = u^0(x,y),\ \ \ \ \ \ \    \forall\, (x,y) \in \Omega, \\
    \int_{\mathbb{R}} u^0(x,y) \, dx = \int_{\mathbb{R}} f(x) \,dx ,\ \  \forall\, y \in \mathbb{T}^1,
\end{cases}
\end{equation}
then there exists a unique solution $u$ to the equation \eqref{KP main equation} on $[0,T]$ satisfying 
\begin{equation}
\begin{cases}
    u - f \in C([0,T]; H^1(\Omega)) \cap L^2(0,T; H^2(\Omega)) \\ 
    u(x,y) = u(x,y+1), \ \ \ \ \forall\, (x,y) \in \Omega.
\end{cases}
\end{equation}
\end{lemma}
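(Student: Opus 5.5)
We prove Lemma~\ref{local existence} by a contraction-mapping argument on the perturbation $g := u - f$. Since $f$ depends only on $x$, the constraint $v_x = -\lambda u_{yy}$ becomes $v_x = -\lambda g_{yy}$. We therefore set
\[
v = -\lambda \partial_x^{-1}\partial_y^2 g, \qquad \partial_x^{-1}h(x,y) := \int_{-\infty}^x h(x_1,y)\,dx_1 .
\]
Substituting $u = f+g$ into \eqref{KP main equation} gives
\[
g_t - g_{xx} - g_{yy} + g_{xxx} + \lambda \partial_x^{-1}\partial_y^2 g = -\big(fg\big)_x - g g_x + F,
\qquad F := f'' - f''' - f f' .
\]
Here $F$ is smooth and compactly supported in $[-1,1]$, hence lies in every $H^k(\Omega)$. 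Moreover $\int_{\mathbb R} F\,dx = 0$, because $f'$, $f''$ and $ff' = (f^2/2)'$ are all exact derivatives of functions constant outside $[-1,1]$; this is what preserves the zero-mean constraint.

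\textbf{Step 1: the linear semigroup.} Let $L := \partial_x^2 + \partial_y^2 - \partial_x^3 - \lambda \partial_x^{-1}\partial_y^2$. In Fourier variables $(\xi,k) \in \mathbb R \times 2\pi\mathbb Z$, its symbol is
\[
-\xi^2 - k^2 - i\xi^3 + i\lambda \frac{k^2}{\xi} \qquad (\text{up to sign conventions}),
\]
whose real part is $-(\xi^2+k^2)$, exactly the heat symbol. The operator $\partial_x^{-1}$ is singular at $\xi = 0$, so we work in the closed subspace
\[
H^s_0 := \Big\{ g \in H^s(\Omega) : \int_{\mathbb R} g\,dx = 0 \ \text{for a.e. } y \Big\},
\]
on which $\partial_x^{-1}$ maps $L^2$ into functions decaying at $+\infty$. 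There $e^{tL}$ is a contraction semigroup on every $H^s_0$ with the parabolic smoothing estimate
\[
\|e^{tL} h\|_{H^{s+1}} \lesssim t^{-1/2}\|h\|_{H^s}.
\]
The $L^2$ estimate is justified directly as in Lemma~\ref{d_dtlemma}: the dispersive and $\partial_x^{-1}\partial_y^2$ terms are skew-adjoint, and the latter produces exactly $\tfrac{\lambda}{2}\int_{\mathbb T^1}\big(\int_{\mathbb R} g\,dx\big)_y^2\,dy = 0$.

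\textbf{Step 2: the Duhamel fixed point.} Define
\[
\Phi(g)(t) = e^{tL}(u^0 - f) + \int_0^t e^{(t-\tau)L}\big[-(fg)_x - \tfrac12 (g^2)_x + F\big]\,d\tau
\]
on the ball $B_T := \{ g \in C([0,T];H^s_0) : \sup_t \|g\|_{H^s} \le 2M_0 \}$. Both $(fg)_x$ and $(g^2)_x$ are $x$-derivatives of decaying functions, so they have zero $x$-integral, and $\Phi$ preserves $H^s_0$. Since $s > 1$, $H^s(\Omega)$ is an algebra, so
\[
\|g^2\|_{H^s} \lesssim \|g\|_{H^s}^2, \qquad \|fg\|_{H^s} \lesssim \|g\|_{H^s}.
\]
Combined with the smoothing estimate this gives
\[
\|\Phi(g)(t)\|_{H^s} \le M_0 + C\sqrt{t}\,\big(M_0 + M_0^2 + \|F\|_{H^{s-1}}\big),
\]
together with the analogous difference estimate. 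Hence $\Phi$ is a contraction on $B_T$ once $T = T(M_0)$ is small. Uniqueness follows from the same difference estimate, or from Gronwall applied to the $L^2$ energy of the difference of two solutions.

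\textbf{Step 3: regularity.} To get $g \in C([0,T];H^1) \cap L^2(0,T;H^2)$, we run the standard $H^1$ energy estimate using the dissipation $-\|\nabla g\|^2$. Periodicity in $y$ is preserved because all operators commute with $y$-translations.

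\textbf{Main obstacle.} The delicate point is the nonlocal operator $\partial_x^{-1}\partial_y^2$. One must verify that the zero-mean constraint is propagated, so that $v \in L^2$ and decays as $x \to +\infty$, and that this term contributes nothing to the energy estimates. I would handle this on the Fourier side by noting that $k^2/\xi$ is purely imaginary, hence the term is skew and bounded on each fixed $\xi$-shell away from zero. Constructing the semigroup on $H^s_0$ then follows by approximating with a cutoff $|\xi| > \eta$ and letting $\eta \to 0$.
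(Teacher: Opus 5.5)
Your argument is correct in outline, with one wrong claim noted below, and it follows a genuinely different route from the paper.

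\textbf{Comparison of the two routes.} You build a mild solution directly in $C([0,T];H^s)$. The explicit symbol, whose real part is exactly $-(\xi^2+k^2)$, gives the smoothing bound $\|e^{tL}\partial_x h\|_{H^s}\lesssim t^{-1/2}\|h\|_{H^s}$, which absorbs the derivative in $(fg)_x+\tfrac12(g^2)_x$. The algebra property of $H^s(\Omega)$ for $s>1$ handles the quadratic term, and the energy class is recovered afterwards.

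The paper never leaves the energy class. It takes the semigroup on the zero-mean space $X$ from Cavalcanti et al. It proves the linear bound $\|v\|_{L^\infty(0,T;H^1)}^2+\|v\|_{L^2(0,T;H^2)}^2\le C_T(\|v^0\|_{H^1}^2+\|\bar f\|_{L^2(0,T;L^2)}^2)$ by $H^1$ energy identities. It controls $F_1G_x$ in $L^2(0,T;L^2)$ with a factor $T^{1/4}$ via the Ladyzhenskaya inequality. It then runs a Picard iteration that contracts in $C([0,T];H^1)\cap L^2(0,T;H^2)$. The hypothesis $s>1$ enters only to bound the first iterate $w^0w^0_x$ through $L^\infty$.

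What each buys:
\begin{itemize}
\item Your version is shorter and self-contained, since it needs no external generation theorem. It also propagates $H^s$ regularity. However, it depends essentially on the Fourier representation and on $s>1$, and the existence time is tied to the $H^s$ norm. The continuation argument would therefore need an $H^s$ a priori bound as well.
\item The paper's version is purely energy-based and is in effect an $H^1$ argument. That matches the $H^1$ a priori bound of Lemma~\ref{global exist} used for continuation.
\end{itemize}
Your Gronwall argument for uniqueness in the energy class is fine, using $L^2(0,T;H^2)\subset L^2(0,T;L^\infty)$.

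\textbf{The wrong claim about $F$.} In general $\int_{\mathbb R}F\,dx\neq0$. Although $f^2/2$ is constant outside $[-1,1]$, the two constants are $u_-^2/2$ and $u_+^2/2$. Hence $\int_{\mathbb R}F\,dx=\tfrac12(u_-^2-u_+^2)$, which vanishes only when $u_+=-u_-$, the $\sigma=0$ normalization of the main theorems. Consequently $\Phi$ does not preserve $H^s_0$: the $k=0$ mode gives $\int_{\mathbb R}\Phi(g)(t)\,dx=\int_{\mathbb R}(u^0-f)\,dx+t\int_{\mathbb R}F\,dx$. The paper's Step~4 tacitly makes the same assertion when it places the right-hand side of the iteration in $X$.

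The repair is cheap in your framework. $F$ is independent of $y$, so it lives entirely in the $k=0$ mode, where the symbol of $\partial_x^{-1}\partial_y^2$ vanishes. Replace $H^s_0$ by the subspace on which $\int_{\mathbb R}g\,dx$ is independent of $y$, i.e.\ $\hat g(0,k)=0$ for $k\neq0$. This subspace has three properties you need:
\begin{itemize}
\item $\Phi$ preserves it.
\item $\partial_x^{-1}g_{yy}$ still decays at $+\infty$, because $\int_{\mathbb R}g_{yy}\,dx=0$.
\item The nonlocal term still contributes $\tfrac{\lambda}{2}\int_{\mathbb T^1}\big((\int_{\mathbb R}g\,dx)_y\big)^2dy=0$ to the energy identity.
\end{itemize}
The lemma's conclusion does not assert zero mean, so nothing else changes. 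Alternatively, reduce to $u_-=-u_+$ first by Galilean invariance.
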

\begin{remark}
    This lemma establishes the local existence of solutions to the KP equation \eqref{KP main equation} and shows that if the initial data $u^0$ is periodic in y with period 1, then the corresponding unique solution u remains periodic in y with period 1.
\end{remark}

\begin{proof}
    The proof follows a classical argument and consists of five steps. \\
\step{1}
We begin by introducing the perturbation variable
    \[ w(t,x,y) = u(t,x,y) - f(x),
    \]
    so that the initial condition can be written as
    \[w(0,x,y) =w^0(x,y) = u^0(x,y) - f(x) \in H^s(\Omega)\ \text{for}\ s>1,\]
and w satisfies 
\begin{equation}\label{equation w}
w_t + w_{xxx} - w_{xx} -w_{yy}+ w w_x + f\, w_x + f_x\, w + \lambda\partial^{-1}_x\, w_{yy} = F,
\end{equation}
where \(F := -f\,f_x - f_{xxx} + f_{xx},\ \text{and}\ \partial^{-1}_x \ \text{is inverse partial derivative with respect to x.}
\)
It is clear that
\(
F \in C_c^\infty(\Omega) \subset L^2(\Omega) .
\) 

We define a new space X,
\[
X := \left\{ w \in L^2(\mathbb{R} \times \mathbb{T}^1) 
:\ \int_{\mathbb{R}} w(x,y)\,dx = 0 
\text{ for a.e. } y \right\}.
\]
Now, we define the linear differential operator A:
\[
A = -\partial_x^3+\partial_x^2+\partial_y^2 - \lambda\partial_{x}^{-1}\partial_{yy}, \quad {
D(A) 
= \left\{ w \in X : Aw \in X \right\}.}
\]
By following arguments similar to those in {\cite[Proposition 2.1]{cavalcanti2014global}}, one can show that $A$ generates a contraction $C_0$-semigroup $\{S(t)\}_{t \ge 0}$ on $X$.

Define the translation operator $\tau$ acting on functions $g$ by
\[
\tau(g)(t,x,y) = g(t,x,y+1).
\]
Since $A$ is a linear differential operator, it commutes with $\tau$, i.e., $A\tau = \tau A$. 
Therefore, the semigroup $\{S(t)\}_{t \ge 0}$ generated by $A$ also commutes with $\tau$, i.e., $S(t)\tau = \tau S(t)$. \\

\step{2}
We consider the following linear problem:
\begin{equation}\label{step 2 assumption}
\begin{cases}
    v_t + v_{xxx} - v_{xx}- v_{yy} +\lambda\partial_{x}^{-1}v_{yy} = \bar{f},\quad \text{for } t>0, \\
    v(0,x,y) = v^0(x,y), \\
    \bar{f}(t,x,y) = \bar{f}(t,x,y+1), \\
    v^0(x,y)=v^0(x,y+1) , \\
    \int_{\mathbb{R}} \bar{f} (t,x,y) dx =0,\forall t \in [0,T]  , \forall y\in \mathbb{T}^1, \\
     \int_{\mathbb{R}}v^0(x,y)\,dx =0, \forall y\in \mathbb{T}^1 .
    
\end{cases}
\end{equation}
In this step, we prove that for any $v^0 \in H^s(\Omega)$, for $s>1$, and 
$\bar{f} \in L^2(0,T; L^2(\Omega))$, the corresponding mild solution satisfies
\begin{equation}\label{linear estimate}
\begin{cases}
    v \in C([0,T]; H^1(\Omega)) \cap L^2(0,T; H^2(\Omega)), \\
    v(t,x,y)=v(t,x,y+1) , \\
    \int_{\mathbb{R}} v(t,x,y)\, dx =0 , \forall t>0\ \text{and} \ y \in \mathbb{T}^1 ,
\end{cases}
\end{equation}
and the estimate
\begin{equation}\label{step 2 estimate}
    \|v\|_{L^\infty(0,T; H^1(\Omega))}^2 
+ \|v\|_{L^2(0,T; H^2(\Omega))}^2 
\le C_T \left( 
\|v^0\|_{H^1(\Omega)}^2 
+ \|\bar{f}\|_{L^2(0,T; L^2(\Omega))}^2 
\right)
\end{equation}
holds.

To this end, we construct the mild solution using the semigroup $\{S(t)\}_{t \ge 0}$ and the Duhamel formula:
\[
v(t) = S(t)v^0 + \int_0^t S(t - s)\bar{f}(s)\, ds.
\]
Using the commutation property $S(t)\tau = \tau S(t)$, together with the fact that both $\bar{f}$ and $v^0$ are periodic in $y$ with period $1$, and that the integral operator commutes with $\tau$, we conclude that the solution $v$ is also periodic in $y$ with period $1$. Similarly, using the last two conditions from the equation \eqref{step 2 assumption} together with the properties of the semigroup, 
we can show that $v \in X, \ \ \forall t \in [0,T]$.

The strong continuity of the semigroup $\{S(t)\}_{t \ge 0}$ ensures that the initial condition is satisfied. Using the boundary condition in $x$ and periodic condition in $y$, we can show the following:
\[
\frac{1}{2}\frac{d}{dt} \|v\|_{L^2(\Omega)}^2 
+ \|v_x\|_{L^2(\Omega)}^2 + \|v_y\|_{L^2(\Omega)}^2
= \langle \bar{f}, v \rangle_{L^2, L^2},
\]
\[\frac{1}{2}\,\frac{d}{dt}\|v_x\|^{2}_{L^{2}(\Omega)} + \|v_{xx}\|^{2}_{L^{2}(\Omega)} + \|v_{xy}\|^{2}_{L^{2}(\Omega)} = \langle \bar{f}, -
v_{xx} \rangle_{L^2,L^2},\]
\[\frac{1}{2}\,\frac{d}{dt}\|v_y\|^{2}_{L^{2}(\Omega)} + \|v_{yy}\|^{2}_{L^{2}(\Omega)} + \|v_{yx}\|^{2}_{L^{2}(\Omega)} = \langle \bar{f}, -v_{yy} \rangle_{L^2,L^2}\ .\]
From these equations, we can get 
\[
\frac{d}{dt}\|v\|^{2}_{H^1(\Omega)} + \|v\|_{H^2(\Omega)}^2 \le C\ \left(\|\bar{f}\|_{L^2(\Omega)}^2 + \|v\|^{2}_{H^1(\Omega)} \right) 
\]
where $C$ is some constant. 

Then we apply Gronwall's inequality to obtain the desired estimate \eqref{step 2 estimate}. Thus, we get \eqref{linear estimate}. \\

\step{3} In this step, we begin by establishing estimates for functions in the class $\Scal_T$, prior to introducing the iteration scheme. These estimates will be used to control the approximate solutions produced by the scheme, enabling us to apply the result of Step 2 and obtain uniform bounds.

Let $v,g \in \Scal_T$. For $s > 1$, we will make use of the following basic Sobolev embeddings in the upcoming arguments:
\begin{equation}\label{bfa}
    \|g\|_{L^4(\Omega)} \le C \|g\|_{H^1(\Omega)}, \quad \|g\|_{L^\infty(\Omega)} \le C\|g\|_{H^s(\Omega)} , \quad 
\|g\|_{L^4(\Omega)} \leq C \|g\|_{L^2(\Omega)}^{1/2} \|\nabla g\|_{L^2(\Omega)}^{1/2} .
\end{equation}
Suppose \(F_1, G \in \Scal_T\), then
\begin{align}\nonumber
\|F_1 G_x\|_{L^2(\Omega)}^2 
&\leq \|F_1\|_{L^4(\Omega)}^2 \|G_x\|_{L^4(\Omega)}^2 \\
&\leq C \|F_1\|_{H^1(\Omega)}^2 \bigl(\|G\|_{H^1(\Omega)} \|G\|_{H^2(\Omega)}\bigr).
\end{align}
Thus,
\begin{align}\nonumber
\int_0^T \|F_1 G_x\|_{L^2(\Omega)}^2 \, dt 
&\leq C\|F_1\|_{L^\infty(0,T;H^1(\Omega))}^2 \|G\|_{L^\infty(0,T;H^1(\Omega))} \int_0^T \|G\|_{H^2(\Omega)} \, dt.
\end{align}
By Cauchy-Schwarz in time,
\begin{align}\nonumber
\int_0^T \|G\|_{H^2} \, dt 
&\leq \sqrt{T} \left( \int_0^T \|G\|_{H^2}^2 \, dt \right)^{1/2}.
\end{align}
Thus, we get 
\begin{equation}\label{famain}
    \|F_1G_x\|_{L^2(0,T;L^2(\Omega))} \le C\ T^\frac{1}{4}\  \|F_1\|_{L^\infty(0,T;H^1(\Omega))}\  \|G\|^{\frac{1}{2}}_{L^\infty(0,T;H^1(\Omega))}\ \|G\|^{\frac{1}{2}}_{L^2(0,T;H^2(\Omega))}.
\end{equation}
Using \eqref{famain}, we get 
\begin{equation}\label{fa1}
    \|vv_x\|_{L^2(0,T;L^2(\Omega))} \le C\  T^\frac{1}{4}\ \|v\|^2_{\Scal^T}.
\end{equation}
We derive a sharper estimate for $w^0(w^0)_x$ term: 
\[
\|w^0w^0_x\|_{L^2(\Omega)} \le \|w^0\|_{L^\infty(\Omega)}  \|w^0_x\|_{L^2(\Omega)} \le  \|w^0\|_{H^s(\Omega)} \|w^0\|_{H^1(\Omega)}.
\]
This implies that
\[
\|w^0w^0_x\|_{L^2(0,T;L^2(\Omega))}\le \sqrt{T} \|w^0\|_{H^s(\Omega)} \|w^0\|_{H^1(\Omega)}.
\]
Using \eqref{bfa}, we can also obtain the following inequalilty:
\begin{equation}\label{fa2}
    \|v f_x\|_{L^2(0,T;L^{2}(\Omega))} + \|f v_x\|_{L^2(0,T;L^{2}(\Omega))} 
\le C \sqrt{T}\, \|v\|_{L^\infty(0,T;H^1(\Omega))}.
\end{equation}
Since \(F\in C_c^\infty(\Omega) \subset L^{2}(\Omega)\) and \(F\) is stationary, we have 
\begin{equation}\label{fa3}
    \norm{F}_{L^2(0,T;L^{2}(\Omega))}
=\sqrt{T} \norm{F}_{L^{2}(\Omega)}.
\end{equation}
\step{4} 
We now introduce the iteration scheme as follows.
Firstly, we set \(w^{(0)}(t,x,y)=w^0(x,y)\), and then, for each \(n\in\NN\cup\{0\}\), we define \(w^{(n+1)}\) to be the solution of
\begin{align}
(w^{(n+1)})_t + (w^{(n+1)})_{xxx} - (w^{(n+1)})_{xx} &- (w^{(n+1)})_{yy} + \lambda \partial^{-1}_x\partial_{yy} (w^{(n+1)}) \\
&= -w^{(n)} (w^{(n)})_x -  (w^{(n)})_x f - f_x w^{(n)} \nonumber 
 + F, \label{wn}\\
w^{(n+1)}(0,x,y) &= w^0(x,y).
\end{align}
We define \(\Phi(w^{(n)})=w^{(n+1)}\). It is clear that the map \(\Phi\) is well-defined on \(\Scal_T\) by \textit{Step 2} and \textit{Step 3}.

Note that, since the initial data $w^0$ is assumed to be periodic in $y$ with period $1$ and satisfies $w^0 \in X$, the right-hand side of \eqref{wn} also belongs to $X$ and is periodic in $y$ with period $1$. Consequently, for every $n \in \mathbb{N}$, the function $w^{(n)} \in X$  and remains periodic in $y$ with period $1$. 
Using \eqref{step 2 estimate} and \eqref{fa1} -- \eqref{fa3}, we find that for each \(n\in\NN\),
\begin{eqnarray} 
\|w^{(n+1)}\|_{\Scal_T}
\le C_T\big( \norm{w^0}_{H^1(\Omega)}
+\sqrt{T} (\sqrt{T}\|w^{(n)}\|^{2}_{\Scal_T}
+\|{w^{(n)}}\|_{L^\infty(0,T;H^1(\Omega))}
+1)\big) ,
\end{eqnarray}
\[
\|w^{(1)}\|_{\Scal_T}
\le C_T\big( \norm{w^0}_{H^1(\Omega)}
+\sqrt{T} (\|w^0\|_{H^s(\Omega)} \|w^0\|_{H^1(\Omega)}
+\|{w^{(0)}}\|_{L^\infty(0,T;H^1(\Omega))}
+1)\big). 
\]

 Since $C_T \sim e^T$ and $e^T \to 1$ as $T \to 0$, there exist $T>0$ and $R(M_0)>0$ such that
\[
\|{w^{(n)}}\|_{{\Scal_T}} \le R \;\implies\; \|w^{(n+1)}\|_{\Scal_T} \le R, \qquad \forall n \in \mathbb{N} \cup \{0\}.
\]
Hence, $\Phi$ maps $B_R \subset \Scal_T$ into $B_R$. \\

\step{5} 
In this step, we prove that the sequence $\{w^{(n)}\}$ is Cauchy in $\Scal_T$. 
It suffices to show that
\[
\|w^{(n+1)} - w^{(n)}\|_{\Scal_T}
\le C_T\  T^{\frac{1}{4}}\,(R+1)\,\|w^{(n)} - w^{(n-1)}\|_{\Scal_T}.
\]
Then, by choosing $T>0$ sufficiently small such that 
\[
C_T\  T^{\frac{1}{4}}\,(R+1) < \tfrac{1}{2},
\]
we obtain a contraction, and hence $\{w^{(n)}\}$ is a Cauchy sequence in $\Scal_T$.

Define  \(z^{(n)}\coloneqq w^{(n+1)}-w^{(n)}\). Then it satisfies \(z^{(n)}(0,x,y)=0\) and 
\begin{align*}
&(z^{(n+1)})_t + (z^{(n+1)})_{xxx} - (z^{(n+1)})_{xx} - (z^{(n+1)})_{yy}+ \lambda \partial^{-1}_x\partial_{yy}z^{(n+1)}\\
&\qquad= -z^{(n)}(w^{(n+1)})_x
-w^{(n)}(z^{(n)})_x
-(z^{(n)})_xf
-f_x z^{(n)} .
\end{align*}
Using \eqref{famain}, we can obtain
\[
\| z^{(n)} (w^{(n+1)})_x \|_{L^2(0,T; L^{2}(\Omega)} \le C\ T^{\frac{1}{4}}\ \| z^{(n)}\|_{L^\infty(0,T; H^{1}(\Omega))}\ \|w^{(n+1)}\|_{\Scal_T} ,
\]
\[
\| (z^{(n)})_x w^{(n+1)}\|_{L^2(0,T; L^{2}(\Omega))} \le C\ T^\frac{1}{4}\  \| w^{(n+1)}\|_{L^\infty(0,T; H^{1}(\Omega))}\ \|z^{(n)}\|_{\Scal_T} .
\]
Using \eqref{fa2}, we can obtain 
\[
\|  (z^{(n)})_x f+ f_x z^{(n)} \|_{L^2(0,T; L^{2}(\Omega))} 
\le C\ \sqrt{T}\,\ \|z^{(n)}\|_{L^\infty(0,T; H^1(\Omega))}.
\]
Thus, 
\[
\| \text{R.H.S}\|_{L^2(0,T;L^2(\Omega))} \le C\ T^\frac{1}{4}\ \| {z^{(n)}}\|_{\Scal_T}.
\]
We then apply \eqref{step 2 estimate} to find that 
\[
\|{z^{(n+1)}}\|_{\Scal_T} \le C_T\ T^{\frac{1}{4}}\ (R+1) \|{z^{(n)}}\|_{\Scal_T}.
\]
Thus, by the contraction mapping theorem, there exists a unique fixed point $w$ of the map $\Phi$, which provides a solution to our problem. Moreover, this unique solution is $1$-periodic in $y$.
\end{proof}

Now, we prove the global existence theorem using the  a priori estimate in the following lemma.
\begin{lemma}\label{global exist}
     Let $u_-$ and $u_+$ be two given constant states, and let $f$ be a smooth monotone function connecting $u_-$ and $u_+$. Let $u^0$ be an initial datum such that $u^0 - f \in H^s(\Omega)$ for $s>1$. Assume that $u$ is a solution of \eqref{KP main equation} on $[0,T_0)$ for some $T_0>0$, and that $u \in X_T$ (equivalently, $u - f \in \Scal_T$) for all $T \in (0,T_0)$. Then there exists a constant $C(T_0)$ such that
\begin{equation}\label{H^1 bound}
\sup_{t \in [0,T_0)} \|u - f\|_{H^1(\Omega)} \le C(T_0).
\end{equation}
\end{lemma}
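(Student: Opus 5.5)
We will prove the a priori bound \eqref{H^1 bound} by energy estimates on the perturbation $w=u-f$, which solves \eqref{equation w} with $\varepsilon_1=\varepsilon_2=\delta=1$:
\[
w_t + w_{xxx} - w_{xx} - w_{yy} + w w_x + f w_x + f_x w + \lambda\partial_x^{-1} w_{yy} = F .
\]
The estimate has two levels. First we bound $\|w\|_{L^2}$, then $\|\nabla w\|_{L^2}$. Throughout we use that $w(t)\in\Scal_T$ for every $T<T_0$. Hence $\int_{\mathbb R} w\,dx=0$ for a.e.\ $y$, and the nonlocal term vanishes in each energy identity, exactly as in \eqref{Extra term bound}: $\partial_x^{-1}w$ decays at both ends of $\mathbb R$, so all boundary terms vanish.

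\textbf{$L^2$ level.} Multiplying by $w$ and integrating over $\Omega$, the dispersive term drops out and $\int w^2 w_x=0$. Integrating by parts,
\[
\int (f w_x + f_x w)\,w = \tfrac12\int f_x w^2 ,
\]
and the nonlocal term gives $\frac{\lambda}{2}\int_{\mathbb T^1}\big(\partial_y\int_{\mathbb R}w\,dx\big)^2dy=0$. Hence
\[
\tfrac12\tfrac{d}{dt}\|w\|_{L^2}^2+\|\nabla w\|_{L^2}^2 \le \tfrac12\|f_x\|_{L^\infty}\|w\|_{L^2}^2+\|F\|_{L^2}\|w\|_{L^2}.
\]
Gronwall then gives $\sup_{[0,T_0)}\|w\|_{L^2}^2+\int_0^{T_0}\|\nabla w\|_{L^2}^2\,dt\le C(T_0)$.

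\textbf{$H^1$ level.} Multiplying by $-\Delta w=-(w_{xx}+w_{yy})$ and integrating, the dispersive term again cancels: $\int w_{xxx}w_{xx}=0$, and $\int w_{xxx}w_{yy}=\int w_{xxy}w_{xy}=0$. For the nonlocal term,
\[
\int \partial_x^{-1}w_{yy}\,w_{xx}=-\int w_{yy}w_x=\int w_{xy}w_y=0,
\]
and similarly $\int \partial_x^{-1}w_{yy}\,w_{yy}=\frac12\int_{\mathbb T^1}\big(\partial_y\int_{\mathbb R} w_y\,dx\big)^2dy=0$ by the zero-mean property. This leaves
\[
\tfrac12\tfrac{d}{dt}\|\nabla w\|_{L^2}^2+\|\nabla^2 w\|_{L^2}^2 = \int (ww_x+fw_x+f_xw-F)\,\Delta w .
\]
The linear terms are bounded by $\frac14\|\nabla^2w\|^2+C(\|w\|_{H^1}^2+\|F\|^2)$.

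\textbf{Main obstacle: the cubic term.} The term $\int w w_x\Delta w$ is the key difficulty. We bound it by $\|w\|_{L^4}\|w_x\|_{L^4}\|\Delta w\|_{L^2}$ and apply the 2D Ladyzhenskaya inequality from \eqref{bfa}:
\[
\|w\|_{L^4}\le C\|w\|^{1/2}\|\nabla w\|^{1/2},\qquad \|w_x\|_{L^4}\le C\|\nabla w\|^{1/2}\|\nabla^2w\|^{1/2}.
\]
The term is then at most $C\|w\|^{1/2}\|\nabla w\|\,\|\nabla^2 w\|^{3/2}$, and Young's inequality gives
\[
\le \tfrac14\|\nabla^2w\|^2+C\|w\|^2\|\nabla w\|^4 .
\]
Writing $y(t)=\|\nabla w\|^2$, we obtain $y'\le C(1+\|w\|^2 y)\,y+C$. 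Since $\|w\|_{L^2}\le C(T_0)$ and $\int_0^{T_0}y\,dt\le C(T_0)$ from the first level, Gronwall's inequality yields $\sup_{[0,T_0)}y\le C(T_0)$. Combined with the $L^2$ bound, this gives \eqref{H^1 bound}.

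These formal computations are justified for the $\Scal_T$ solution by regularization (mollify the data, or use the $L^2(0,T;H^2)$ regularity together with a standard Lions–Magenes argument). An alternative is to apply the estimates to the iterates of Lemma~\ref{local existence} and pass to the limit. The bound \eqref{H^1 bound} then allows Lemma~\ref{local existence} to be restarted uniformly, and Theorem~\ref{gloabal existence} follows.
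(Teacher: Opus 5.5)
Your proposal is correct and follows essentially the same route as the paper. Both arguments run an $L^2$ energy estimate (with the nonlocal term removed by the zero-mean property), then test with $-\Delta w$ and close by Gronwall using $\int_0^{T_0}\|\nabla w\|_{L^2}^2\,dt\le C(T_0)$. The only variation is in the cubic term: the paper integrates it by parts into $\frac12\int w_x|\nabla w|^2$ and applies the $L^3$ Gagliardo--Nirenberg bound to get $C\|\nabla w\|_{L^2}^4$, whereas you use H\"older plus Ladyzhenskaya on $w$ and $w_x$ to get $C\|w\|_{L^2}^2\|\nabla w\|_{L^2}^4$, which is equivalent once the $L^2$ bound is available. (A small caveat that applies equally to the paper: on $\mathbb{R}\times\mathbb{T}^1$ these interpolation inequalities hold only with $\|g\|_{H^1}$ in place of $\|\nabla g\|_{L^2}$, since widely spread $y$-independent functions violate the homogeneous form. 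Because $\|w\|_{L^2}$ is already bounded, this adds only lower-order terms and leaves the Gronwall structure $y'\le C(1+y)y+C$ unchanged.)
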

\begin{proof}
Multiply \eqref{equation w} by $w$ to get 
\begin{eqnarray}
&&\int_{\Omega} w_t\ w \,dx\, dy +\int_{\Omega} w_{xxx}w\, dx\, dy -\int_{\Omega}w_{xx} w\, dx\, dy -\int_{\Omega}w_{yy} w\, dx\, dy \nonumber \\
&& = -\int_{\Omega} w^2 w_x \, dx\, dy -\int_{\Omega} f\, ww_x \, dx\, dy - \int_{\Omega}f_x\, w^2 \,dx \,dy -\lambda\int_{\Omega} \partial^{-1}_x\, w_{yy} w\ \,dx\, dy + \int_{\Omega} F w\, dx\,  dy. \nonumber
\end{eqnarray}
Using integration by parts, L.H.S becomes
\begin{align*}
\int_{\Omega} w_t w \, dx \,dy = \frac{1}{2} &\frac{d}{dt} \|w\|_{L^2(\Omega)}^2, \quad
\int_{\Omega} w_{xxx} w \, dx\,dy = -\int_{\Omega} w_{xx} w_x \, dx\,dy = 0, \\
&- \int_{\Omega} (\Delta w) w \, dx\,dy = \int_{\Omega} |\nabla w|^2\, dx\, dy.
\end{align*}
We analyze R.H.S on a term-by-term basis. The first term becomes
\[
\int_{\Omega} w w_x w \,dx\,dy = \frac{1}{3} \int \partial_x(w^3) \, dx\,dy = 0 .
\]
The second term is equal to 
\[
- \int_{\Omega}f\, w w_x\, dx\, dy = - \frac{1}{2}\int_{\Omega}f\,(w^2)_x\, dx\, dy= \frac{1}{2} \int_{\Omega}f_x\, w^2 \, dx\, dy .
\]
Adding this to the third term we get
\[
\frac{1}{2} \int_{\Omega}f_x\, w^2 \, dx\,dy - \int_{\Omega}f_x\, w^2 \, dx\, dy = -\frac{1}{2} \int_{\Omega}f_x\, w^2 \, dx\, dy \le C \|w\|^2_{L^2(\Omega)}.
\]
Using the fact that the operator \(\partial^{-1}_x\partial_{yy}\) is skew symmetric, the second last term becomes
\[
\int_{\Omega} \partial^{-1}_x\, w_{yy} w\, dx\, dy =0 .
\]
Similarly the last term can be bounded by \[\int_{\Omega} F\ w\, dx\,  dy \le C\|w\|^2_{L^2(\Omega)}.\]
Combining all these we get:
\[
\frac{1}{2} \frac{d}{dt} \|w\|_{L^2(\Omega)}^2 +\|\nabla w\|_{L^2(\Omega)}^2\ \le C \|w\|^2_{L^2(\Omega)}.
\]
By Gronwall's inequality, for any interval $[0,T_0]$, we obtain:
\begin{equation}
\sup_{t \in [0,T_0]}\|w(t)\|_{L^2(\Omega)} \le C(T_0,\|w^0\|_{L^2(\Omega)}). \label{eq:L2bound}
\end{equation}
Integrating in time provides the crucial integrability of the gradient:
\begin{equation}
\int_0^T \|\nabla w(t)\|_{L^2(\Omega)}^2 dt \le C(T,\|w^0\|_{L^2(\Omega)}). \label{eq:grad_int}
\end{equation}
Next, we multiply \eqref{equation w} by $-\Delta w$ and integrate, then we get the following:
\begin{eqnarray}
&&\int_{\Omega} w_t\ \Delta w\, dx\, dy +\int_{\Omega} w_{xxx}\Delta w\, dx\, dy -\int_{\Omega}w_{xx}\Delta w\, dx\, dy -\int_{\Omega}w_{yy}\Delta w\, dx\, dy \nonumber \\
&& = -\int_{\Omega} w^2 w_x\ \Delta w\, dx\, dy -\int_{\Omega} f\, ww_x\ \Delta w\, dx\, dy - \int_{\Omega}f_x\, w \Delta w\, dx\, dy \\ && -\lambda\int_{\Omega} \partial^{-1}_x\, w_{yy}\ \Delta w\, dx\, dy + \int_{\Omega} F \Delta w\, dx\, dy. \nonumber
\end{eqnarray}
The LHS becomes 
\begin{align*}
&\int_{\Omega} w_t (-\Delta w) \, dx\,dy = \frac{1}{2} \frac{d}{dt} \|\nabla w\|_{L^2(\Omega)}^2, \quad 
 \int_{\Omega} \Delta w (-\Delta w) \, dx\,dy = \|\Delta w\|_{L^2(\Omega)}^2, \\
&\int_{\Omega} w_{xxx} (-\Delta w) \, dx\,dy = \int_{\Omega} w_{xxx} (-w_{xx} - w_{yy}) \, dx\,dy = -\frac{1}{2} \int_{\Omega} \partial_x (w_{xy}^2) \, dx\,dy = 0.
\end{align*}
We analyze RHS on a term-by-term basis. The first term becomes,
\begin{eqnarray}\nonumber
    -\int_{\Omega} w w_x (w_{xx} + w_{yy}) \,dx\, dy 
    &=& \frac{1}{2}\int_{\Omega} w_x^3 \,dx\, dy + \frac{1}{2}\int_{\Omega} w_x w_y^2 \,dx\, dy \nonumber\\ 
    &=& \frac{1}{2}\int_{\Omega} w_x |\nabla w|^2 \,dx\, dy \nonumber \\
    &\le& \frac{1}{2} \int_{\Omega} |\nabla w|^3 \,dx\, dy. \nonumber
\end{eqnarray}
By the 2D Gagliardo-Nirenberg inequality, $\|f\|_{L^3(\Omega)}^3 \le C \|f\|_{L^2(\Omega)}^2 \|\nabla f\|_{L^2(\Omega)}$. 
Applying this to $\nabla w$, we have
\begin{equation*}
\int_{\Omega} |\nabla w|^3 \, dx\,dy \le C \|\nabla w\|_{L^2(\Omega)}^2 \|\Delta w\|_{L^2(\Omega)}.
\end{equation*}
Then we use Young's inequality to get
\begin{equation*}
C \|\nabla w\|_{L^2(\Omega)}^2 \|\Delta w\|_{L^2(\Omega)} \le \frac{1}{4} \|\Delta w\|_{L^2(\Omega)}^2 + C \|\nabla w\|_{L^2(\Omega)}^4.
\end{equation*}
So we have
\begin{equation}\label{global 2}
 -\int_{\Omega} w w_x (w_{xx} + w_{yy})\, dx\,dy \le \frac{1}{4} \|\Delta w\|_{L^2(\Omega)}^2 + C\|\nabla w\|_{L^2(\Omega)}^4.
\end{equation}
Second and third term together can be bounded as follows:
\begin{equation}\label{global 3}
-\int_{\Omega} (fw)_x (-\Delta w)\, dx\,dy = -\int_{\Omega} f_{xx}ww_x\, dx\,dy - \frac{3}{2}\int_{\Omega} f_xw_x^2\, dx\,dy - \frac{1}{2}\int_{\Omega} f_xw_y^2\, dx\,dy \le C \|w\|_{H^1(\Omega)}^2.
\end{equation}
Using the fact that the operator \(\partial^{-1}_x\partial_{yy}\) is skew symmetric, the second last term becomes
\begin{equation}\label{global 4}
 \int_{\Omega} \partial^{-1}_x w_{yy}\ \Delta w\, dx\,dy =0.
\end{equation}
Similarly the last term can be bounded as follows:
\begin{equation}\label{global 5}
\int_{\Omega} F \Delta w\, dx\, dy \le C \|F\|^2_{L^2(\Omega)} +\frac{1}{6} \|\Delta w\|^2_{L^2{(\Omega})}.
\end{equation}
Combining \eqref{global 2} -- \eqref{global 5}, we get the following:
\begin{equation}
\frac{d}{dt} \|\nabla w\|_{L^2(\Omega)}^2 +  \|\Delta w\|_{L^2(\Omega)}^2 \le C ((\|\nabla w\|_{L^2(\Omega)}^2 +1)  \|\nabla w\|_{L^2(\Omega)}^2) +C.
\end{equation}
Using \eqref{eq:grad_int} and applying Gronwall's inequality, we get
\begin{equation}\label{gloabl gradient sup}
    \sup_{t \in [0,T_0]} \|\nabla w(t)\|_{L^2(\Omega)}^2 \le C(T_0) 
\end{equation}
for any interval $[0,T_0]$. 

Equation \eqref{gloabl gradient sup} together with \eqref{eq:L2bound} gives us \eqref{H^1 bound}. 
\end{proof}

Combining Lemma \ref{local existence} and \ref{global exist}, we obtain global-in-time existence via a standard continuation argument. Therefore, for any $u^0$ with $u^0 - f \in H^s(\Omega)$ for $s>1$, there exists a unique global-in-time solution $u \in X_T$. This completes the proof of Theorem \ref{gloabal existence}

\section{Multi-d KdV/Zakharov–Kuznetsov Burgers equation}
In this section, we show that our method can be used to prove the $L^2$-contraction property for the following multi-dimensional Korteweg–de Vries (KdV)–Burgers equation, also known as the Zakharov–Kuznetsov–Burgers (ZKB) equation.
\begin{eqnarray}\label{KdVzk main equation}
    u_{t} + u u_{x_1} - \varepsilon \Delta u + \delta u_{x_1x_1x_1} + \kappa \sum_{i=2}^n u_{x_1x_ix_i} = 0, \qquad u(0,x) = u_{0}(x), \nonumber \\
    u(x_1,\ldots,x_i,\ldots,x_n)=u(x_1,\ldots,x_i+1,\ldots,x_n) \quad \forall i \in \{2,3,...,n\},
\end{eqnarray}
where \(u = u(t,x) \in \mathbb{R}\) is the unknown function, defined for \(t > 0\) and
\(x = (x_{1}, x')\) with \(x_{1} \in \mathbb{R}\) and
\(x' \in \mathbb{T}^{n-1} := \mathbb{R}^{n-1} / \mathbb{Z}^{n-1}\), \(n \geq 2\) being the \(n-1\) dimensional flat torus.

As before, if we consider a planar traveling viscous–dispersive shock wave propagating in the $x_1$-direction, described by a profile of the form: \(\tilde{u} (\xi) = \tilde{u} (x_1 - \sigma t)\) for (\ref{KdVzk main equation}), then $\tilde{u}$ satisfies \eqref{shock equation}. 

The main results can be stated as follows:
\begin{theorem}[Monotone Case]\label{main theorem for monotonezk}
    Let $\varepsilon$ and $\delta$ be positive constants, and let $\kappa$  be an arbitrary constant. Assume $ \frac{\d(u_--u_+)}{2\e^2}\leq \frac{1}{4} $, i.e., the traveling wave solution \(\tilde{u}\) of \eqref{shock equation}  is a monotonically decreasing solution connecting $u_{-}\ to\ u_{+}$. Let \(u^{0}\) be a given initial data satisfying \(u^{0}-\tilde{u}\in  L^2(\Omega)\), and let \(u\) be a solution to equation \eqref{KdVzk main equation}. If \beq\label{mono_asszk} \varepsilon > \frac{s}{4\pi}, \eeq then there exists a Lipschitz shift \(X(t)\) such that\\
\begin{equation}
\label{theorem1zk}
\begin{aligned}
\int_{\Omega} \big| u(t,x) - \tilde{u}(x_1 - \sigma t - X(t)) \big|^2 \,dx 
+ C_{*}\varepsilon \int_0^T \int_{\Omega} \left (\left( u(t,x)-\tilde{u}(x_1 - \sigma t - X(t)) \right)_{x_1} \right)^2 \,dx \,dt \\
+ C^{*} \sum_{i=2}^n \int_0^T\int_{\Omega} \left (\left(u(t,x)- \tilde{u}(x_1 - \sigma t - X(t)) \right)_{x_i} \right)^2 \, dx \, dt
\leq \int_{\Omega} |u^0 - \tilde{u}|^2 \, dx,
\end{aligned}
\end{equation}
where \(C_*= 2-\frac{1}{2\lambda_1}\), \(C^*=2\varepsilon-\frac{s^2}{8\pi^2\varepsilon} \),  and the Lipschitz shift satisfies the ODE,
\begin{equation} 
    \label{lipstichzk}
\begin{cases}
{\dot{X} = -\frac{M}{2s}\int_{\Omega}\Big( u(t,x_1+ X(t),x') - \tilde{u} (x_1 - \sigma t )\Big)\tilde{u}'(x_1 - \sigma t)\, dx_1 \, dx',}\\ {X(0) = 0,}
\end{cases} 
\end{equation}  
with $M=\frac{1}{2}$. Moreover, for $2<p<\infty$, the following holds:
\begin{equation} \label{tstabilitymonotonezk}
\int_{\Omega}
\left|u(t,x)-\tilde{u}(x_1-\sigma t-X(t))\right|^p \, dx 
\to 0 \quad \text{as } t \to \infty,
\end{equation}
which shows time-asymptotic stability.
Furthermore, the shift function satisfies the following time asymptotic behavior
\begin{equation} \label{shiftasyzk}
|\dot{X}(t)| \to 0, \quad \text{as } t \to \infty, \quad \text{and} \quad \frac{X(t)}{t} \to 0 \quad \text{as } t\to\infty. 
\end{equation} 
\end{theorem}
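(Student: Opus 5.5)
We mirror the argument for Theorem \ref{Main theorem for monotone}. The plan has three parts: an energy identity, a reduction to the one-dimensional Poincaré-type estimate on each transverse slice, and control of the transverse fluctuation by the $x'$-dissipation.

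\textbf{Step 1: energy identity.} By Galilean invariance we take $u_-=-u_+=s$, so $\sigma=0$, $\xi=x_1$, and $\tilde u$ solves \eqref{shock equation} with $\varepsilon_1=\varepsilon$. Set $w=u^X-\tilde u$ with $u^X(t,x_1,x')=u(t,x_1+X(t),x')$. Repeating the computation of Lemma \ref{d_dtlemma}, we aim to show
\begin{equation*}
\frac{d}{dt}\int_\Omega \frac{w^2}{2}\,dx=\dot X\int_\Omega w\tilde u'\,dx-\frac12\int_\Omega w^2\tilde u'\,dx-\varepsilon\int_\Omega |\nabla w|^2\,dx .
\end{equation*}
This identity is simpler than the KP one, since there is no nonlocal term $v$ and hence no zero-mean condition is needed. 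The transport term, Burgers term and $\delta w_{x_1x_1x_1}$ term are handled exactly as before. The new term $\kappa\int_\Omega w\, w_{x_1x_ix_i}\,dx$ vanishes: integrating by parts in $x_i$ (periodic) and then in $x_1$ gives $-\kappa\int_\Omega w_{x_i}w_{x_1x_i}\,dx=-\frac{\kappa}{2}\int_\Omega \partial_{x_1}(w_{x_i}^2)\,dx=0$. Note that $\tilde u$ is independent of $x'$. This is why $\kappa$ may be arbitrary. Plugging in the shift \eqref{lipstichzk} with $M=\frac12$ yields the analogue of \eqref{main equation}, with $\varepsilon_2$ replaced by $\varepsilon$ and $\partial_y$ replaced by $\nabla_{x'}$.

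\textbf{Step 2: slice-wise Poincaré estimate.} Fix $x'\in\mathbb T^{n-1}$ and change variables $z=s-\tilde u(\xi)\in[0,2s]$, setting $W(z,x')=w$ and $\overline W(x')=\int_0^{2s}W\,dz$. Using \eqref{main lemma0} in the form \eqref{lower bound of dz}, together with Lemma \ref{KV} applied in $z$ for each $x'$, we get the same bound as in the monotone KP proof:
\begin{equation*}
\frac{1}{4s}\int_{\mathbb T^{n-1}}\Big(\overline W-\int_{\mathbb T^{n-1}}\overline W\Big)^2dx'+\frac{\varepsilon(1-4\lambda_1)}{4\lambda_1}\int_\Omega w_{x_1}^2\,dx-\varepsilon\sum_{i\ge2}\int_\Omega w_{x_i}^2\,dx .
\end{equation*}

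\textbf{Step 3 (main obstacle): controlling the transverse fluctuation.} The new ingredient is the Poincaré inequality on $\mathbb T^{n-1}$ with sharp constant $\frac{1}{4\pi^2}$. This holds on the unit torus in any dimension, because the first nonzero eigenvalue of $-\Delta$ there is $4\pi^2$. It gives
\begin{equation*}
\int_{\mathbb T^{n-1}}\Big(\overline W-\int_{\mathbb T^{n-1}}\overline W\Big)^2dx'\le\frac{1}{4\pi^2}\int_{\mathbb T^{n-1}}|\nabla_{x'}\overline W|^2\,dx'.
\end{equation*}
Next, since $\overline W=-\int_{\mathbb R} w\,\tilde u'\,d\xi$, Cauchy--Schwarz gives $|\nabla_{x'}\overline W|^2\le \int_{\mathbb R}|\nabla_{x'}w|^2\,d\xi\,\int_{\mathbb R}|\tilde u'|^2\,d\xi$. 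Then \eqref{total variation for monotone} gives $\int_{\mathbb R}|\tilde u'|^2\,d\xi\le \sup|\tilde u'|\int_{\mathbb R}|\tilde u'|\,d\xi\le \frac{s^3}{\varepsilon}$. Combining, the transverse part of the right-hand side is at most
\begin{equation*}
\Big(\frac{s^2}{16\pi^2\varepsilon}-\varepsilon\Big)\sum_{i\ge2}\int_\Omega w_{x_i}^2\,dx .
\end{equation*}
Condition \eqref{mono_asszk}, $\varepsilon>\frac{s}{4\pi}$, is exactly what makes this coefficient negative, i.e.\ $C^*=2\varepsilon-\frac{s^2}{8\pi^2\varepsilon}>0$. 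Choosing $\frac14<\lambda_1<\sqrt2-1$ makes $C_*=2-\frac{1}{2\lambda_1}>0$. Integrating in time over $[0,T]$ then gives \eqref{theorem1zk}.

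\textbf{Step 4: asymptotics.} The decay \eqref{tstabilitymonotonezk} follows from the dissipation bound in \eqref{theorem1zk}, arguing as in Theorem 2.3 of \cite{Hur}. This requires a uniform-in-time bound on $\frac{d}{dt}\|\nabla w\|_{L^2}^2$, obtained from higher-energy estimates analogous to Lemma \ref{global exist}; there the $\kappa$-term again drops out by skew-symmetry. Finally, \eqref{shiftasyzk} follows from
\begin{equation*}
|\dot X|\le \frac{M}{2s}\,\|w\|_{L^4(\Omega)}\,\|\tilde u'\|_{L^{4/3}(\mathbb R)},
\end{equation*}
whose right-hand side tends to zero by \eqref{tstabilitymonotonezk} with $p=4$.
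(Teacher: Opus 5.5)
Your proposal is correct and follows the paper's proof essentially step for step. The shared steps are: the energy identity, with the $\kappa$-term removed by integrating by parts in $x_i$ and then in $x_1$; the change of variables $z=s-\tilde u$ with the slice-wise weighted Poincaré inequality of Lemma~\ref{KV}; and the Poincaré inequality on $\mathbb{T}^{n-1}$ with constant $\frac{1}{4\pi^2}$, combined with Cauchy--Schwarz and $\int_{\mathbb{R}}|\tilde u'|^2\,d\xi\le \frac{s^3}{\varepsilon}$, which gives exactly the coefficient $\frac{s^2}{16\pi^2\varepsilon}-\varepsilon$ and the condition $\varepsilon>\frac{s}{4\pi}$. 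The time-asymptotic part is deferred to Theorem 2.3 of \cite{Hur} plus the H\"older bound on $\dot X$, just as in the paper. Your remark that this step needs a uniform-in-time higher-order bound is a fair caveat: Lemma~\ref{global exist} only gives $T_0$-dependent constants, so neither you nor the paper actually supplies that bound.
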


\begin{theorem}[Non-monotone Case] \label{main theorem for non monotonezk}
       Let \(\e=1\), \(\d\)  be a positive constant, and $\kappa$ be an arbitrary constant such that
\begin{equation} \label{cond1_2zk}
\frac{1}{4} < \d s < \frac{1}{2},
\end{equation}
and where \(u_\pm=\mp s\) be given states of far field with $s>0$.

 Let $u^0$ be given initial data satisfying  \(u^{0}-\tilde{u} \in  L^2(\Omega)\), and let \(u\) be any solution to equation \eqref{KdVzk main equation} with initial condition $u^0$. Then, for \beq\label{s_defzk} s < \sqrt{\frac{7.86}{M}} \pi ,\qquad\hbox{with}\qquad M = \frac{4}{3},\eeq
 the following $L^2$-contraction holds:
\begin{equation}\label{theorem2zk}
\begin{aligned}
\int_{\Omega} \big| u(t,x) - \tilde{u}(x_1 - \sigma t - X(t)) \big|^2 \, dx
+ \frac{1}{5} \int_0^T \int_{\Omega}\left(\left( u(t,x)-\tilde{u}(x_1 - \sigma t - X(t)) \right)_{x_1} \right)^2 \, dx \,dt \\
+ C^{**} \sum_{i=2}^n \int_0^T\int_{\Omega} \left (\left( u(t,x)-\tilde{u}(x_1 - \sigma t - X(t))\right)_{x_i} \right)^2 \, dx \, dt
\leq \int_{\Omega} |u^0 - \tilde{u}|^2 \, dx, \quad 
\end{aligned}
\end{equation}
where \(C^{**}=2-\frac{2.034 }{8\pi^2}  Ms^2 \), and the shift function $X(t)$ satisfies $X(0) = 0$,
\begin{equation}\label{shift non monotonezk}
\dot{X}(t) = -\frac{M}{2s} \int_{\Omega} \left( u(t, x + X(t),y) - \tilde{u}(x - \sigma t) \right) \tilde{u}'(x - \sigma t)\, dx_1\,dx'.
\end{equation}
Moreover, for $2<p<\infty$, the following holds:
\begin{equation} \label{tstabiltynonmonotonezk}
\int_{\Omega}
\left|u(t,x)-\tilde{u}(x_1-\sigma t-X(t))\right|^p \, dx 
\to 0 \quad \text{as } t \to \infty,
\end{equation}
which shows the time-asymptotic stability. Furthermore, the following time asymptotic behavior of the shift function holds:
\begin{equation} \label{shiftasy2zk}
|\dot{X}(t)| \to 0, \quad \text{as } t \to \infty ,\quad \text{and} \quad \frac{X(t)}{t} \to 0 \quad \text{as } t\to\infty.
\end{equation} 
\end{theorem}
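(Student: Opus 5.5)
The plan is to copy the KP argument of Section 5, with $y$ replaced by $x'\in\mathbb{T}^{n-1}$ and the nonlocal term $v$ replaced by the ZK term $\kappa\sum_{i\ge2}u_{x_1x_ix_i}$.

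\textbf{Step 1: energy identity.} Set $\xi=x_1-\sigma t$ and $w=u^X-\tilde u$. Following Lemma \ref{d_dtlemma}, I would derive
\begin{align*}
\frac{d}{dt}\int_\Omega\frac{|u-\tilde u^{-X}|^2}{2}\,dx
&=\dot X\int_\Omega w\,\tilde u'\,dx-\frac12\int_\Omega w^2\tilde u'\,dx\\
&\quad-\int_\Omega|w_\xi|^2\,dx-\sum_{i=2}^n\int_\Omega|w_{x_i}|^2\,dx .
\end{align*}
Three observations justify this.
\begin{itemize}
\item The planar profile $\tilde u$ is independent of $x'$, so the ZK term acts only on $w$.
\item For each $i$, integration by parts gives $\int_\Omega w\,w_{\xi x_ix_i}=-\int_\Omega w_{x_i}w_{\xi x_i}=-\frac12\int_\Omega\partial_\xi(w_{x_i}^2)=0$, using decay in $\xi$ and periodicity in $x_i$. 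So this term vanishes for every $\kappa$.
\item Since $\Delta\tilde u=\tilde u''$ and $\varepsilon=1$, the diffusion yields $-\int|\nabla w|^2$.
\end{itemize}
Unlike the KP case, no zero-mean condition is needed, which is why Theorem \ref{main theorem for non monotonezk} only assumes $u^0-\tilde u\in L^2$. Inserting the shift \eqref{shift non monotonezk} turns the first term into $-\frac{M}{2s}\big(\int_\Omega w\tilde u'\big)^2$, as in \eqref{main equation}.

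\textbf{Step 2: one-dimensional estimate on each slice.} For each fixed $x'\in\mathbb{T}^{n-1}$, the function $w(\cdot,x')$ is a one-dimensional function of $\xi$, so Theorem \ref{induction step thm} applies slice by slice. Integrating the result over $\mathbb{T}^{n-1}$ gives
\[
-\frac{M}{2s}\int_{\mathbb{T}^{n-1}}\Big(\int_{\mathbb R}w\tilde u'\,d\xi\Big)^2dx'-\frac12\int_\Omega w^2\tilde u'-\int_\Omega w_\xi^2\le-\frac1{10}\int_\Omega w_\xi^2 .
\]
The right-hand side of the energy identity is then bounded by
\[
\frac{M}{2s}\int_{\mathbb{T}^{n-1}}(g-\bar g)^2\,dx'-\frac1{10}\int_\Omega w_\xi^2-\sum_{i\ge2}\int_\Omega w_{x_i}^2,
\]
where $g(x')=\int_{\mathbb R}w\,\tilde u'\,d\xi$ and $\bar g$ is its mean over $\mathbb{T}^{n-1}$.

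\textbf{Step 3: absorbing the transverse variance.} This is the one genuinely new point. I would use the Poincaré inequality on $\mathbb{T}^{n-1}$ with constant $1/(4\pi^2)$, which holds in every dimension because the first nonzero eigenvalue of $-\Delta$ on the unit torus is $4\pi^2$. This gives $\int(g-\bar g)^2\le\frac1{4\pi^2}\int|\nabla' g|^2$. Next, Cauchy--Schwarz together with \eqref{total variation} gives
\[
|\nabla'g|^2\le\Big(\int_{\mathbb R}|\nabla' w|^2\,d\xi\Big)\,\sup|\tilde u'|\int_{\mathbb R}|\tilde u'|\,d\xi\le 0.5s^2\cdot 2.034s\int_{\mathbb R}|\nabla' w|^2\,d\xi .
\]
Combining, the transverse terms are bounded by $\big(\frac{2.034Ms^2}{16\pi^2}-1\big)\sum_{i\ge2}\int_\Omega w_{x_i}^2$. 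This coefficient is negative exactly under \eqref{s_defzk}, so it equals $-C^{**}/2$.

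\textbf{Step 4: integration in time.} Integrating the resulting differential inequality over $[0,T]$ gives \eqref{theorem2zk}. The asymptotic statements \eqref{tstabiltynonmonotonezk} and \eqref{shiftasy2zk} then follow exactly as in the KP case, via the argument of \cite{Hur} and Hölder's inequality applied to \eqref{shift non monotonezk}.

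The monotone case, Theorem \ref{main theorem for monotonezk}, is proved identically. There one replaces Step 2 by the Section 4 computation using \eqref{main lemma} and \eqref{poincare0}, and uses the bounds \eqref{total variation for monotone} with $\varepsilon_2=\varepsilon_1=\varepsilon$.

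The main obstacle is rigour rather than computation: the vanishing of the dispersive ZK term requires enough regularity, in $H^2$ in the transverse directions, to integrate by parts. Since no existence theorem is stated for the ZKB equation, I would argue for smooth solutions and pass to the limit by density.
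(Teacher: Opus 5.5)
Your proposal is correct and is essentially the paper's argument. The paper proves the ZKB energy identity exactly as in your Step 1 (Lemma \ref{d_dtzklemma}), and then leaves the non-monotone case to the reader as a transcription of the Section 5 KP proof: Theorem \ref{induction step thm} on transverse slices, Poincar\'e on the torus with constant $1/(4\pi^2)$, and $\int|\tilde u'|^2\le\sup|\tilde u'|\int|\tilde u'|$ from \eqref{total variation}. That is precisely your Steps 2--4, and it yields the same $C^{**}$. Like the paper, you get the asymptotic statements only by appeal to \cite{Hur}; for $n\ge3$, the $H^1$-based Gagliardo--Nirenberg step behind decay for every $2<p<\infty$ in \eqref{tstabiltynonmonotonezk} does not carry over verbatim from the two-dimensional KP setting, so that part is no better justified in your write-up than in the paper.
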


Note the condition in \eqref{s_defzk} involves $\varepsilon$ if we do not assume $\varepsilon=1$.

\subsection{The main equation}
 We change the variable from $x_1$ to \(\xi = x_1 - \sigma t\) and rewrite (\ref{KdVzk main equation}) in terms of $u(t,\xi,x')$ as
\begin{equation}
\label{changezk of varibale equation}
u_t -\sigma u_\xi + u u_\xi= \varepsilon \Delta u - \delta u_{\x\x\x} - \kappa \sum_{i=2}^n u_{\x x_ix_i}
\end{equation}
where the operator \(\Delta\) is on \((\xi,x')\) variables.

\begin{lemma}\label{d_dtzklemma}
    Let $\tilde{u}$ be a planar viscous-dispersive shock satisfying equation (\ref{shock equation}), and for any $T>0$, let $u$ be any solution of equation \eqref{KdVzk main equation}. Then for any Lipschitz curve $X : [0,T] \to \mathbb{R}$, the following holds:
    \beq \label{main equation_0zk}
       \frac{d}{dt} \int_{\Omega} \frac{|u - \tilde{u}^{-X}|^2}{2} \,d\xi\, dx'
        =\dot{X} \int_{\Omega}(u^{X} - \tilde{u})\,\tilde{u}'\,d\xi\, dx' - \frac{1}{2}\int_{\Omega}|u^{X}-\tilde{u}|^2\,\tilde{u}' \,d\xi\,dx'- \varepsilon \int_{\Omega}|\nabla(u^{X} -\tilde{u})|^{2} \,d\xi\, dx',
\eeq
    where \(u^{X}(t,\xi ,y) = u(t,\xi +X(t),y) \) and \(\tilde{u}^{- X} = \tilde{u} (\xi -X(t))\).
\end{lemma}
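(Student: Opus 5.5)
The plan follows the proof of Lemma~\ref{d_dtlemma} line by line. The new features are that the nonlocal term $v$ is gone and two new terms appear: the full Laplacian, and the mixed dispersive term $\kappa\sum_{i\ge2}u_{\xi x_ix_i}$. First, subtract the profile equation \eqref{shock equation}, which $\tilde u^{-X}$ satisfies with $\tilde u$ independent of $x'$, from \eqref{changezk of varibale equation}. This gives
\begin{align*}
(u-\tilde u^{-X})_t ={}& \dot X\,\tilde u^{-X}_\xi+\sigma(u-\tilde u^{-X})_\xi-\Big(\tfrac{u^2}{2}-\tfrac{(\tilde u^{-X})^2}{2}\Big)_\xi+\varepsilon\Delta(u-\tilde u^{-X})\\
&-\delta(u-\tilde u^{-X})_{\xi\xi\xi}-\kappa\sum_{i=2}^n(u-\tilde u^{-X})_{\xi x_ix_i}.
\end{align*}
This uses $\partial_{x_i}\tilde u^{-X}=0$ for $i\ge2$, so $\Delta\tilde u^{-X}=\tilde u^{-X}_{\xi\xi}$ and the mixed term applied to $\tilde u^{-X}$ vanishes. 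Next, multiply by $w:=u-\tilde u^{-X}$ and integrate over $\Omega=\mathbb R\times\mathbb T^{n-1}$.

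The individual terms are then handled as follows.
\begin{itemize}
\item The $\dot X$ term is kept as is.
\item The transport term $\sigma\int ww_\xi$ vanishes.
\item The Burgers term is treated exactly as before. Writing $\frac{u^2}{2}-\frac{(\tilde u^{-X})^2}{2}=\frac{w^2}{2}+\tilde u^{-X}w$ and integrating by parts in $\xi$ gives $-\frac12\int w^2(\tilde u')^{-X}$.
\item The dissipation gives $-\varepsilon\int|\nabla w|^2$ after one integration by parts in each variable. There are no boundary terms: $w$ decays as $|\xi|\to\infty$, and the $x'$ directions are periodic.
\item The dispersion $\delta\int ww_{\xi\xi\xi}=-\delta\int w_\xi w_{\xi\xi}=-\frac{\delta}{2}\int(w_\xi^2)_\xi=0$.
\item The only genuinely new term is $\kappa\int w\,w_{\xi x_ix_i}$. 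Integrating by parts in $x_i$ on the torus gives $-\kappa\int w_{x_i}w_{\xi x_i}=-\frac{\kappa}{2}\int\partial_\xi(w_{x_i}^2)=0$. It therefore drops out for every real $\kappa$, which explains why the statement allows $\kappa$ arbitrary.
\end{itemize}

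Finally, change variables $\xi\mapsto\xi+X(t)$ in every integral. This uses translation invariance of the measure, and $w(t,\xi+X,x')=u^X-\tilde u$. The result is \eqref{main equation_0zk}. Because $X$ is Lipschitz, the time derivative is understood for a.e.\ $t$.

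The main obstacle is justifying the integrations by parts at the available regularity. Since $u$ is an arbitrary solution with $u^0-\tilde u\in L^2$, we need enough decay and smoothness of $w$, for instance $w\in L^2_tH^2$ with $w_t$ in a suitable dual space, for the boundary terms in $\xi$ to vanish and for $\frac{d}{dt}\frac12\|w\|^2=\langle w_t,w\rangle$ to hold. I would argue at the level of smooth solutions decaying in $\xi$, as in the KP case, and appeal to density/regularity of solutions in the analogue of $X_T$ for the cubic boundary terms. No zero-mean condition is needed here, in contrast to \eqref{Extra term bound}.
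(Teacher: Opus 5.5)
Your proposal is correct and follows essentially the same route as the paper. You derive the same equation for $u-\tilde{u}^{-X}$, reduce the Burgers term to $-\frac12\int w^2(\tilde{u}')^{-X}$, and make the $\sigma$, $\delta$ and $\kappa$ terms vanish by integration by parts; you then obtain $-\varepsilon\int|\nabla w|^2$ from the Laplacian and finish with the shift $\xi\mapsto\xi+X(t)$. Your explicit computation $\int w\,w_{\xi x_ix_i}=-\frac12\int\partial_\xi(w_{x_i}^2)=0$ and your regularity caveat spell out details that the paper only asserts.
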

\begin{proof}

We use \eqref{changezk of varibale equation} and \eqref{shock equation} to obtain 
\begin{align}
(u - \tilde{u}^{-X})_t 
&= \dot{X}(t)\,\tilde{u}_{\xi}^{-X} 
+ \sigma (u - \tilde{u}^{-X})_{\xi} 
- \left( \frac{u^2}{2} - \frac{(\tilde{u}^{-X})^2}{2} \right)_{\xi} + \varepsilon\, \Delta (u - \tilde{u}^{-X}) \nonumber\\
&\quad {\color{white}..........................................................}  
 - \delta\, (u - \tilde{u}^{-X})_{\xi \xi \xi}- \kappa \sum_{i=2}^n (u - \tilde{u}^{-X})_{\x x_i x_i}
\end{align}
Now we compute \\ \\
\noindent\makebox[\textwidth][l]{%
$\displaystyle
\frac{d}{dt} \int_{\Omega} \frac{|u - \tilde{u}^{-X}|^2}{2} \, d\xi\, dx'
$}
\begin{eqnarray} \nonumber
&=& \int_{\Omega} (u - \tilde{u}^{- X}) (u - \tilde{u} ^{- X})_t \, d\xi\, dx'  \nonumber \\
&=& \dot{X} \int_{\Omega} (u - \tilde{u}^{-X})\,\tilde{u}^{-X}_{\xi} \,d\xi\, dx' - \int_{\Omega} (u-\tilde{u}^{-X}) \left( \frac{u^2}{2} - \frac{(\tilde{u} ^{- X})^2}{2} \right)_\xi \,d\xi \,dx' \nonumber \\
&& + \varepsilon \int_{\Omega} (u - \tilde{u}^{-X})\,\Delta(u - \tilde{u}^{-X}) \,d\xi\, dx' - \delta \int_{\Omega} (u - \tilde{u}^{-X})(u - \tilde{u}^{-X})_{\xi\xi\xi} \, d\xi \,dx' \nonumber \\
&&  + \sigma \int_{\Omega} (u -\tilde{u}^{-X})_{\xi} \,(u -\tilde{u}^{-X})\, d\xi \,dx'-\kappa \sum_{i=2}^n\int_{\Omega} (u - \tilde{u}^{-X})(u - \tilde{u}^{-X})_{\x x_ix_i} \, d\xi \,dx'  \nonumber
\end{eqnarray}

We analyze each term on the right-hand side separately as follows:
\begin{equation*}
\begin{aligned}
-\int_{\Omega} (u - \tilde{u} ^{- X}) \left( \frac{u^2}{2} - \frac{(\tilde{u} ^{- X})^2}{2} \right)_\xi\, d\xi\, dx' 
&= \int_{\Omega} (u - \tilde{u} ^{- X})_\xi \left( \frac{u^2}{2} - \frac{(\tilde{u}^{ - X})^2}{2} \right)\, d\xi\, dx' \\
&= \frac{1}{2} \int_{\Omega} (u - \tilde{u}^{ - X})_\xi \left[ (u - \tilde{u} ^{- X})^2 + 2(\tilde{u}^ {- X})(u - \tilde{u} ^{- X}) \right]\, d\xi\, dx' \\
&= \frac{1}{2} \int_{\Omega} \left( (u - \tilde{u} ^{- X})^2 \right)_\xi (\tilde{u} ^{- X})\, d\xi\, dx' \\
&= -\frac{1}{2} \int_{\Omega} (u - \tilde{u} ^{- X})^2 (\tilde{u}')^{-X}\, d\xi\, dx'.
\end{aligned}
\end{equation*}

Using integration by parts and boundary conditions, we can prove that the dispersion term vanishes as:
\[\delta \int_{\Omega} (u - \tilde{u}^{-X})(u - \tilde{u}^{-X})_{\xi\xi\xi}\, d\xi\, dx' =\kappa \int_{\Omega} (u - \tilde{u}^{-X})(u - \tilde{u}^{-X})_{\x x_ix_i} \, d\xi \,dx'=0,\quad \forall\ i \in \{2,....,n\}.\]
Similarly, the dissipation term becomes:
\[ \varepsilon \int_{\Omega} (u - \tilde{u}^{-X})\,\Delta(u - \tilde{u}^{-X})\, d\xi\, dx' = - \varepsilon \int_{\Omega} |\nabla(u - \tilde{u}^{-X})|^2\, d\xi\, dx'. \]
The last term becomes:
\[\sigma \int_{\Omega} (u -\tilde{u}^{-X})_{\xi} \,(u -\tilde{u}^{-X}) \,d\xi\, dx' = \frac{\sigma}{2} \int_{\Omega} \left(|u -\tilde{u}^{-X}|^{2}\right)_{\xi} \, d\xi\, dx' =0. 
\]
Then, we use a change of variable \(\xi \mapsto \xi +X(t)\) to have
\begin{eqnarray} \label{last term mainzk}
  && \frac{d}{dt} \int_{\Omega} \frac{|u - \tilde{u}^{-X}|^2}{2} \,d\xi\, dx' \nonumber \\
   &=& \dot{X} \int_{\Omega}(u^{X} - \tilde{u})\,\tilde{u}' \,d\xi\, dx' - \frac{1}{2}
\int_{\Omega}|u^{X}-\tilde{u}|^2\,\tilde{u}'\,d\xi\, dx' - \varepsilon \int_{\Omega}|\nabla(u^{X} - \tilde{u})|^{2} \, d\xi \,dx' \nonumber
\end{eqnarray} 
This completes the proof of Lemma \ref{d_dtzklemma}.
\end{proof}
\bigskip

Then, using very similar methods as for the KP-equation, we can prove the main theorem for the ZKB equation. To make this paper self-contained, we add the proof of Theorem \ref{main theorem for monotonezk} for the monotonic case. But we leave the proof of Theorem \ref{main theorem for non monotonezk} for the non-monotonic case to the reader.

\subsection{Proof of Theorem \ref{main theorem for monotonezk}}

Now we choose $\dot X= -\frac{M}{2s} \left(\int_{\Omega}(u^{X} - \tilde{u})\,\tilde{u}'\,d\xi\, dx' \right)$ as in \eqref{lipstichzk} or \eqref{shift non monotonezk} with different $M$ for monotone and non-monotone cases, then we have 
\begin{eqnarray} \label{main equationzk}
  && \frac{d}{dt} \int_{\Omega} \frac{|u - \tilde{u}^{-X}|^2}{2} \,d\xi\, dx' \\
    &=&- \frac{M}{2s} \left(\int_{\Omega}(u^{X} - \tilde{u})\,\tilde{u}'\,d\xi\, dx' \right)^2 - \frac{1}{2}\int_{\Omega}|u^{X}-\tilde{u}|^2\,\tilde{u}' \,d\xi\, dx'- \varepsilon \int_{\Omega}|\nabla(u^{X} -\tilde{u})|^{2}  \,d\xi\, dx'. \nonumber
\end{eqnarray} 

Note, when $u_-=-u_+=s>0$, $\sigma=0$, so $\xi=x_1$.
We can rewrite (\ref{main equationzk}) with respect to the following variables
\[w=(u^{X} - \tilde{u})\circ(\x,x')  , \quad z = (u_{-} - \tilde{u}),\quad W = (u^{X} - \tilde{u})\circ (z,x').\]
Since \(\tilde{u}'< 0\), the change of variable \(\x\mapsto z\in [0,2s]\) satisfying
\begin{equation}
\label{dzzk}
  \frac{dz}{d\xi} = -\tilde{u}' \quad  
\end{equation}
is well-defined for $\xi\in (-\infty,\infty)$.

Since it holds from (\ref{lipstichzk}) that
\[\dot{X} = -\frac{1}{4s}\int_{\mathbb{T}^{n-1}}\int_{0}^{2s}(u(t,\xi +X(t),x') - \tilde{u}(\xi))\, \tilde{u}'(\xi)\,d\xi\, dx'.\]
From (\ref{dzzk}), we get
\begin{equation} \label{X in terms of W barzk}
    \dot{X} = \frac{1}{4s}\int_{\mathbb{T}^{n-1}}\overline{W} \,dx',\qquad \int_{\Omega}(u^{X} - \tilde{u})\,\tilde{u}' \,d\xi\, dx' = - \int_{\mathbb{T}^{n-1}}\overline{W} \,dx', \quad 
\end{equation}
where \(\overline{W}= \int_{0}^{2s}W \,dz\). 

For the second term in (\ref{main equationzk})
\beq \label{B boundzk}
-\frac{1}{2}\int_{\Omega}|u^{X}-\tilde{u}|^2\, \tilde{u}'\,d\xi \, dx' 
= \frac{1}{2}\int_{\mathbb{T}^{n-1}}\int_{0}^{2s}W^{2}\,dz\,dx'.
\eeq
From 
(\ref{main lemma}), we get
\begin{equation} \label{lower bound of dzzk}
 \frac{\lambda_1 z(2s-z)}{\varepsilon} \le \frac{dz}{d\xi} \le \frac{\gamma z(2s-z)}{\varepsilon}.
\end{equation}
For the third term in (\ref{main equationzk}), by using (\ref{lower bound of dzzk}), we get
\begin{eqnarray} \label{G boundzk}
 \varepsilon\int_{\mathbb{T}^{n-1}}\int_{\mathbb{R}}|\nabla(u^{X} - \tilde{u})|^{2}\,d\xi\, dx'  
&=&\varepsilon\int_{\mathbb{T}^{n-1}}\int_{0}^{2s}|\partial_{z}W|^{2}\frac{dz}{d\xi} \,dz\, dx'  + \varepsilon\sum_{i=2}^n\int_{\mathbb{T}^{n-1}}\int_{\mathbb{R}}|\partial_{x_i}w|^{2}\, d\x \,dx' \nonumber\\
&\geq& \lambda_1 \int_{\mathbb{T}^{n-1}}\int_{0}^{2s}z(2s - z)|W_{z}|^{2} \,dz\, dx'   + \varepsilon \sum_{i=2}^n  \int_{\mathbb{T}^{n-1}}\int_{\mathbb{R}}|\partial_{x_i}w|^{2} \,d\x\, dx'. \nonumber \\
\end{eqnarray}

Using (\ref{X in terms of W barzk}), (\ref{B boundzk}), (\ref{G boundzk}), we get
\begin{eqnarray}
&&
   - \frac{1}{4s} \left(\int_{\Omega}(u^{X} - \tilde{u})\tilde{u}' \,d\xi\, dx' \right)^2 - \frac{1}{2}\int_{\Omega}|u^{X}-\tilde{u}|^2\tilde{u}' \,d\xi\,dx'- \varepsilon \int_{\Omega}|\nabla(u^{X} - \tilde{u})|^{2} \, d\xi \,dx'  \nonumber\\
&\leq& -\frac{1}{4s}\left(\int_{\mathbb{T}^{n-1}}\overline{{W}} \,dx'\right)^{2}+\frac{1}{2}\int_{\mathbb{T}^{n-1}}\int_{0}^{2s}W^{2}\,dz\, dx' -\lambda_1 \int_{\mathbb{T}^{n-1}}\int_{0}^{2s}z(2s - z)|W_{z}|^{2} \,dz\, dx'  \nonumber \\
&& -  \varepsilon \sum_{i=2}^n  \int_{\mathbb{T}^{n-1}}\int_{\mathbb{R}}|\partial_{x_i}w|^{2} \,d\x\, dx'
  \nonumber \\
&=& -\frac{1}{4s}\left(\int_{\mathbb{T}^{n-1}}\overline{{W}}\,dx'\right)^{2}+\frac{1}{4s}\int_{\mathbb{T}^{n-1}}\overline{{W}}^{2}\,dx' \nonumber\\
&&+\frac{1}{2}\int_{\mathbb{T}^{n-1}}\left(\int_{0}^{2s}W^{2}\,dz-\frac{1}{2s}\overline{{W}}^{2}-\int_{0}^{2s}\frac{1}{2}z(2s-z)|W_{z}|^{2}\,dz\right)\,dx'  \nonumber\\
&& +\left(\frac{1}{4}-\lambda_1\right)\int_{\mathbb{T}^{n-1}} \int_{0}^{2s}z(2s-z)|W_{z}|^{2}\,dz\,\, dx'\,  -\varepsilon \sum_{i=2}^n  \int_{\mathbb{T}^{n-1}}\int_{\mathbb{R}}|\partial_{x_i}w|^{2} \,d\x\, dx'.
 \quad  \nonumber 
\end{eqnarray}
Note that from (\ref{poincare0}), we have

\[ \int_{0}^{2s}W^2 \,dz- \frac{1}{2s}\overline{{W}}^{2} - \frac{1}{2}\int_{0}^{2s}z(2s - z)|W_{z}|^{2}\,dz\leq 0.\]
Using equation \eqref{G boundzk}, together with Poincaré inequality on $\mathbb{T}^{n-1}$, we obtain a bound for the right hand side of \eqref{main equationzk} as follows:
\begin{eqnarray}
  &&  \frac{-1}{4s} \left(\int_{\Omega}(u^{X} - \tilde{u})\tilde{u}'\,d\xi\, dx' \right)^2 - \frac{1}{2}\int_{\Omega}|u^{X}-\tilde{u}|^2\tilde{u}' \,d\xi\,dx'- \varepsilon \int_{\Omega}|\nabla(u^{X} - \tilde{u})|^{2} \, d\xi\, dx' \nonumber \qquad\qquad
  \end{eqnarray}
  
  \begin{align}
&\leq -\frac{1}{4s}\left(\int_{\mathbb{T}^{n-1}}\overline{{W}}\,dx'\right)^{2}+\frac{1}{4s}\int_{\mathbb{T}^{n-1}}\overline{{W}}^{2}\,dx' +\left(\frac{1}{4}-\lambda_1\right)\int_{\mathbb{T}^{n-1}} \int_{0}^{2s}z(2s-z)|W_{z}|^{2} \,dz \,dx'  \nonumber \\
& - \varepsilon \sum_{i=2}^n  \int_{\mathbb{T}^{n-1}}\int_{\mathbb{R}}|\partial_{x_i}w|^{2} \,d\x\, dx' 
 \quad \nonumber\\
&\leq \frac{1}{4s}\int_{\mathbb{T}^{n-1}} \left(\overline{W} - \int_{\mathbb{T}^{n-1}} \overline{W}\right)^2 \,dx' -\varepsilon \sum_{i=2}^n  \int_{\mathbb{T}^{n-1}}\int_{\mathbb{R}}|\partial_{x_i}w|^{2} \,d\x\, dx' 
 +\left(\frac{\varepsilon}{4\lambda_1}-\varepsilon\right)\int_{\mathbb{T}^{n-1}} \int_{\mathbb{R}}|w_{\xi}|^{2}\,d\xi\, dx'
 \quad  \nonumber \\
&\le\frac{1}{16s\pi^2}  \sum_{i=2}^n \int_{\mathbb{T}^{n-1}} |\overline{W}_{x_i}|^2 dx'  -\varepsilon \sum_{i=2}^n  \int_{\mathbb{T}^{n-1}}\int_{\mathbb{R}}|\partial_{x_i}w|^{2} \,d\x\, dx' 
+\left(\frac{\varepsilon}{4\lambda_1}-\varepsilon\right)\int_{\mathbb{T}^{n-1}} \int_{\mathbb{R}}|w_{\xi}|^{2}\,d\xi\, dx'
 \quad  \nonumber\\
&\le \frac{1}{16s\pi^2} \Big( \sum_{i=2}^n \int_{\mathbb{T}^{n-1}} \int_{\mathbb{R}}|\partial_{x_i}{w}|^2\, d\xi\, dx' \Big ) \Big( \int_{\mathbb{R}} |\tilde{u}'|^2 \, d\xi\Big) -\varepsilon \sum_{i=2}^n  \int_{\mathbb{T}^{n-1}}\int_{\mathbb{R}}|\partial_{x_i}w|^{2} \,d\x\, dx' \nonumber \\ 
&+\left(\frac{\varepsilon}{4\lambda_1}-\varepsilon\right)\int_{\mathbb{T}^{n-1}} \int_{\mathbb{R}}|w_{\xi}|^{2}\,d\xi \,dx'
 \quad  \nonumber \\
&\le \frac{\sup_{\mathbb{R}} |\tilde{u}'|}{16s\pi^2} \Big( \sum_{i=2}^n \int_{\mathbb{T}^{n-1}} \int_{\mathbb{R}}|\partial_{x_i}{w}|^2 \,d\xi \,dx' \Big ) \Big( \int_{\mathbb{R}} |\tilde{u}'| \, d\xi\Big) -\varepsilon \sum_{i=2}^n  \int_{\mathbb{T}^{n-1}}\int_{\mathbb{R}}|\partial_{x_i}w|^{2} \,d\x\, dx'\nonumber \\
& +\left(\frac{\varepsilon}{4\lambda_1}-\varepsilon\right)\int_{\mathbb{T}^{n-1}} \int_{\mathbb{R}}|w_{\xi}|^{2} \,d\xi \,dx'
 \quad  \nonumber \\
&\le\left(\frac{s^2}{16\pi^2\varepsilon}- \varepsilon\right) \sum_{i=2}^n \int_{\mathbb{T}^{n-1}} \int_{\mathbb{R}} |\partial_{x_i}{w}|^2 \,d\x \,dx'  +\left(\frac{\varepsilon}{4\lambda_1}-\varepsilon\right)\int_{\mathbb{T}^{n-1}} \int_{\mathbb{R}}|w_{\xi}|^{2} \,d\xi\, dx' \label{less than zerozk}
\end{align}
where we used \eqref{total variation for monotone} in the last step. From \eqref{mono_asszk}, we have \(C^* = 2\varepsilon - \frac{s^2}{8\pi^2 \varepsilon} > 0\). Moreover, since \(\lambda_1 > \frac{1}{4}\), it follows that
\(
C_* = 2 - \frac{1}{2\lambda_1} > 0
\). Using \eqref{main equationzk} and \eqref{less than zerozk}, we obtain
\begin{align}
\frac{d}{dt} \int_{\Omega} \frac{|u - \tilde{u}^{-X}|^2}{2}\, d\x \, dx'
&+ \frac{C_{*}\varepsilon}{2} \int_{\Omega} \left( \partial_{x_1} \big(u(t,x)-\tilde{u}(x  - X(t))\big) \right)^2 \, dx_1\,dx' \nonumber\\
&+ \frac{C^{*}}{2} \sum_{i=2}^n \int_{\Omega} \left( \partial_{x_i} \big(u(t,x)-\tilde{u}(x - X(t))\big) \right)^2 \, dx_1\,dx' \label{final equation monotone casezk}
\le 0.
\end{align}
By integrating \eqref{final equation monotone casezk} over the interval [0,T], we obtain \eqref{theorem1zk}.

Using \eqref{theorem1zk}, the proof of \eqref{tstabilitymonotonezk} is analogous to that of Theorem 2.3 in \cite{Hur}. Consequently, \eqref{tstabilitymonotonezk} implies that 
\begin{equation}\nonumber\label{Xdotestimatezk}
|\dot{X}(t)|
\leq \| u(t,\cdot,\cdot) - \tilde{u}(\cdot - X(t)) \|_{L^4(\Omega)}
 \|\tilde{u}' \|_{L^\frac{4}{3}(\mathbb{R})}
\;\longrightarrow\; 0
\quad \text{as } t \to \infty,
\end{equation}
since \( \|\tilde{u}'\|_{L^\frac{4}{3}(\mathbb{R})}\) is bounded.
This completes the proof of Theorem \ref{main theorem for monotonezk}.

\appendix
\section{Proof of Theorem \ref{induction step thm}}
\label{app}
Now we add the proof of the Theorem \ref{induction step}, which is very similar to the one in Section 4 of \cite{chen2026uniform}. We add it to make this paper self-contained.
\begin{proof}
Recall we use the following notations: we index the local extrema of \(\util(\x)\) from right to left.
The rightmost extremum is denoted by \(u_0\), and the remaining extrema are indexed sequentially as \(u_1, \ u_2, \cdots\) in the order they appear when moving leftward.
We write \(\x_i\) for the spatial location at which the value \(u_i\) is attained, so that \(\util(\x_i)=u_i\), as shown in Figure \ref{pp}.

Next we denote
\begin{equation} \label{badgood}
\Hcal_i \coloneqq -\frac{1}{2}\int_{\mathbb{T}^1}\int_{\x_i}^{\x_{i-1}} w^2 \util' \,d\x\, dy, \qquad
\Dcal \coloneqq \int_{\mathbb{T}^1} \int_\RR (w_\x)^2 \,d\x\, dy, \qquad
\Dcal[a,b] \coloneqq \int_{\mathbb{T}^1} \int_a^b (w_\x)^2 \,d\x\,dy.
\end{equation}
We introduce the change of variable on each monotonic interval \((\x_i,\x_{i-1})\) as follows:
\begin{equation} \label{ztu0}
z(\x) \coloneqq \util(\x), \qquad
dz = \util' \,d\x.
\end{equation}
We use the following in the upcoming arguments,
\begin{equation} \label{transfer0}
\int_{\mathbb{T}^1}\Big(\int_{J_i} w\util' \,d\x\Big)^2 \,dy
=\int_{\mathbb{T}^1}\Big(\int_{J_i} w_\x (\util-u_i) \,d\x\Big)^2 \,dy
\le \Big(\int_{J_i} (\util-u_i)^2 \,d\x\Big) 
\Dcal[\x_i,\x^i]  ,
\end{equation}
where \(J_i=(\x_i,\x^i)\).

\step{0}
First of all, using the same argument as in \eqref{transfer0}, we observe 
\[
-\Big(\int_{-\infty}^{\x_s}(\util-s)^2\,d\x\Big) \Big(\int_{\mathbb{T}^1}\int_{-\infty}^{\x_s} (w_\x)^2 \,d\x \,dy\Big)
\le -\int_{\mathbb{T}^1}\Big(\int_{-\infty}^{\x_s} w\util' \,d\x\Big)^2 \,dy.
\]
Then, using \eqref{L2B-S} in Proposition \ref{prop:L2}, we find that
\begin{equation} \label{step0-1}
-20 C_0 (0.001) \Dcal[-\infty,\x_s]
\le -\frac{40C_0}{2s} \int_{\mathbb{T}^1}\Big(\int_{-\infty}^{\x_s} w\util' \,d\x\Big)^2 \,dy,
\end{equation}
where \(C_0\) is a positive constant to be determined at the end of \textit{Step 0}.
Then, we get
\[
-\frac{M}{2s}\int_{\mathbb{T}^1} \Big(\int_\RR w\util' \,d\x\Big)^2 \,dy
-\frac{40C_0}{2s}\int_{\mathbb{T}^1} \Big(\int_{-\infty}^{\x_s} w\util' \,d\x\Big)^2 \,dy
\le -\frac{1}{2s}\frac{40 C_0 M}{40C_0 + M} \int_{\mathbb{T}^1} \Big(\int_{\x_s}^\infty w\util' \,d\x\Big)^2 \,dy.
\]
Thus, for large enough \(M\)---in particular, for any \(M\ge\frac{40}{39}C_0\)---we have
\begin{equation} \label{step0-2}
-\frac{M}{2s} \int_{\mathbb{T}^1}\Big(\int_\RR w\util' \,d\x\Big)^2 \,dy
-\frac{40C_0}{2s}\int_{\mathbb{T}^1} \Big(\int_{-\infty}^{\x_s} w\util' \,d\x\Big)^2 \,dy
\le -\frac{C_0}{2s}\int_{\mathbb{T}^1}\Big(\int_{\x_s}^\infty w\util' \,d\x\Big)^2 \,dy.
\end{equation}
We also exploit a portion of \(\Hcal_1\) as follows:
\begin{equation} \label{step0-3}
\frac{1}{15}\Hcal_1
=-\frac{1}{30}\int_{\mathbb{T}^1}\int_{\x_1}^{\x_0} w^2\util' \,d\x \,dy
\le -\frac{1}{30} \int_{\mathbb{T}^1} \int_{\x_s}^{\x_0} w^2 \util' \,d\x \,dy
\le -\frac{1}{30(u_0-s)} \int_{\mathbb{T}^1} \Big(\int_{\x_s}^{\x_0} w\util' \,d\x\Big)^2 \,dy.
\end{equation}
Using \eqref{step0-2} and \eqref{step0-3}, we obtain the squared average term on the rightmost decreasing interval:
\begin{eqnarray} \label{step0-4}
-\frac{C_0}{2s}\int_{\mathbb{T}^1}\Big(\int_{\x_s}^\infty w\util' \,d\x\Big)^2 \,dy
-\frac{1}{30(u_0-s)} \int_{\mathbb{T}^1}\Big(\int_{\x_s}^{\x_0} w\util' \,d\x\Big)^2 \,dy \nonumber \\
\le -\frac{C_0}{30C_0(u_0-s)+2s} \int_{\mathbb{T}^1}\Big(\int_{\x_0}^\infty w\util' \,d\x\Big)^2 \,dy.
\end{eqnarray}
We choose the constant \(C_0\) so that it satisfies
\[
\frac{C_0}{30C_0(u_0-s)+2s} \ge \frac{7}{12(u_0+s)}, \quad \text{or equivalently,} \quad
C_0 \ge \frac{14s}{222s-198u_0}.
\]
On the other hand, since \(u_0\le1.0601s\) by \eqref{u0-exp}, it suffices to choose \(C_0\ge\frac{13}{10}\).
We then fix \(C_0=\frac{13}{10}\).
Based on the change of variables \eqref{ztu0}, we use Lemma \ref{KV}, with Proposition \ref{prop:key} and \eqref{step0-4}, to get
\begin{equation} \label{step0-5}
\begin{aligned}
&-\frac{7}{12(u_0+s)}\int_{\mathbb{T}^1}\Big(\int_{\x_0}^\infty w\util' \,d\x\Big)^2 \,dy
-\frac{7}{24}(\lbar_0)^{-1} \Dcal[\x_0,\infty]\\
&\qquad
\le -\frac{7}{12(u_0+s)}\int_{\mathbb{T}^1}\Big(\int_{-s}^{u_0} w \,dz\Big)^2 \,dy
-\frac{7}{24}\int_{\mathbb{T}^1}\int_{-s}^{u_0} (u_0-z)(z+s) (w_z)^2 \,dz \,dy\\
&\qquad
\le -\frac{7}{12} \int_{\mathbb{T}^1} \int_{-s}^{u_0} w^2 \,dz \,dy
=\frac{7}{12} \int_{\mathbb{T}^1} \int_{\x_0}^\infty w^2 \util' \,d\x \,dy.
\end{aligned}
\end{equation}
Note that \(\lbar_0=0.355\) and \(\frac{7}{24}(\lbar_0)^{-1} < 0.83\).
This cancels with \(\Hcal_0\) as follows:
\begin{eqnarray} \label{step0-6}
\frac{7}{12} \int_{\mathbb{T}^1} \int_{\x_0}^\infty w^2 \util' \,d\x \,dy
+\Hcal_0
&=&\frac{7}{12} \int_{\mathbb{T}^1} \int_{\x_0}^\infty w^2 \util' \,d\x \,dy
-\frac{1}{2}\int_{\mathbb{T}^1} \int_{\x_0}^\infty w^2 \util' \,d\x \,dy \nonumber \\
&=&\frac{1}{12} \int_{\mathbb{T}^1} \int_{\x_0}^\infty w^2 \util' \,d\x \,dy.
\end{eqnarray}
Thus, summing up \eqref{step0-1}--\eqref{step0-6}, we summarize \textit{Step 0} as follows:
\begin{eqnarray} \label{step0}
\begin{aligned}
-(0.026)\Dcal[-\infty,\x_s]
-\frac{M}{2s}\int_{\mathbb{T}^1}\Big(\int_\RR w\util' \,d\x\Big)^2 \,dy
+\frac{1}{15}\Hcal_1 
-(0.83)\Dcal[\x_0,\infty]
+\Hcal_0  \\
\le \frac{1}{12} \int_{\mathbb{T}^1} \int_{\x_0}^\infty w^2 \util' \,d\x \,dy.
\end{aligned}
\end{eqnarray}
We note that \(\Hcal_1\) is negative, whereas \(\Hcal_0\) is positive.
We also recall that \(C_0=\frac{13}{10}\) and \(M\ge\frac{4}{3}\). Fix \(M=\frac{4}{3}\).

\step{1} In this step, we verify the base case of the induction---we obtain the following good term: 
\begin{equation} \label{base}
-\Big(\frac{1}{2}+a_1\Big)\int_{\mathbb{T}^1}\int_{\x_1}^{\x_0} w^2 \util' \,d\x \,dy,
\end{equation}
for some constant \(a_1>0\).
To this end, using the remainder in \eqref{step0}, we first observe
\begin{equation} \label{step1-1}
\frac{1}{12}\int_{\mathbb{T}^1} \int_{\x_0}^\infty w^2 \util' \,d\x \,dy
\le \frac{1}{12} \int_{\mathbb{T}^1}\int_{\x_0}^{\x^1} w^2 \util' \,d\x \,dy
\le -\frac{1}{12(u_0-u_1)} \int_{\mathbb{T}^1} \Big(\int_{\x_0}^{\x^1} w \util' \,d\x\Big)^2 \,dy.
\end{equation}
On the other hand, using \eqref{transfer0} with \eqref{L2B} in Proposition \ref{prop:L2}, we find that
\begin{equation} \label{step1-2}
-0.178C_1 \int_{\mathbb{T}^1} \int_{J_1} (w_\x)^2 \,d\x \,dy
=-0.178C_1 \Dcal[\x_1,\x^1]
\le -\frac{C_1}{u_0-u_1} \int_{\mathbb{T}^1} \Big(\int_{J_1} w\util'\,d\x\Big)^2 \,dy,
\end{equation}
where \(C_1>0\) is a constant to be determined later.
The two estimates above show that we have
\[
-\frac{1}{12(u_0-u_1)} \int_{\mathbb{T}^1} \Big(\int_{\x_0}^{\x^1} w \util' \,d\x\Big)^2 \,dy
-\frac{C_1}{u_0-u_1} \int_{\mathbb{T}^1} \Big(\int_{J_1} w\util' \,d\x\Big)^2\, dy \]
\[\le - \frac{C_1}{(12C_1+1)(u_0-u_1)} \int_{\mathbb{T}^1} \Big(\int_{\x_1}^{\x_0} w \util'\, d\x\Big)^2 \,dy.
\]
We choose \(C_1\) to satisfy 
\[
\frac{C_1}{12C_1+1} = \frac{1}{15}, \quad \text{or equivalently,} \quad
C_1=\frac{1}{3}.
\]
Then, we rewrite the above inequality into the following:
\begin{eqnarray} \label{step1-3}
-\frac{1}{12(u_0-u_1)} \int_{\mathbb{T}^1}\Big(\int_{\x_0}^{\x^1} w \util' \,d\x\Big)^2 \,dy
-\frac{1}{3(u_0-u_1)}\int_{\mathbb{T}^1} \Big(\int_{J_1} w\util' \,d\x\Big)^2 \,dy \nonumber \\
\le - \frac{1}{15(u_0-u_1)}\int_{\mathbb{T}^1} \Big(\int_{\x_1}^{\x_0} w \util' \,d\x\Big)^2 \,dy.
\end{eqnarray}
It follows from Lemma \ref{KV} and Proposition \ref{prop:key} with \eqref{ztu0} that
\begin{equation} \label{step1-4}
\begin{aligned}
&- \frac{1}{15(u_0-u_1)} \int_{\mathbb{T}^1}\Big(\int_{\x_1}^{\x_0} w \util' \,d\x\Big)^2 \,dy
-\frac{1}{30}(\lbar_1)^{-1} \Dcal[\x_1,\x_0]\\
&\qquad
\le -\frac{1}{15(u_0-u_1)}\int_{\mathbb{T}^1}\Big(\int_{u_1}^{u_0} w \,dz\Big)^2 \,dy
-\frac{1}{30}\int_{\mathbb{T}^1}\int_{u_1}^{u_0} (u_0-z)(z-u_1) (w_z)^2 \,dz \,dy\\
&\qquad
\le -\frac{1}{15}\int_{\mathbb{T}^1} \int_{u_1}^{u_0} w^2 \,dz \,dy
=-\frac{1}{15}\int_{\mathbb{T}^1} \int_{\x_1}^{\x_0} w^2 \util' \,d\x \,dy.
\end{aligned}
\end{equation}
We recall \(\lbar_1=9.60\).
Thus, combining \eqref{step1-1}-\eqref{step1-4}, we obtain the following summary of \textit{Step 1}:
\[
\frac{1}{12}\int_{\mathbb{T}^1} \int_{\x_0}^\infty w^2 \util' \,d\x \,dy
-(0.06)\Dcal[\x_1,\x^1]
-(0.01)\Dcal[\x_1,\x_0]
\le -\frac{1}{15} \int_{\mathbb{T}^1}\int_{\x_1}^{\x_0} w^2 \util' \,d\x \,dy.
\]
This together with \eqref{step0} and the \(\Hcal_1\) term implies that
\begin{equation} \label{step0and1}
\begin{aligned}
&-\frac{M}{2s}\int_{\mathbb{T}^1}\Big(\int_\RR w\util' d\x\Big)^2 dy
+\Hcal_0
-(0.83)\Dcal[\x_0,\infty]
-(0.026)\Dcal[-\infty,\x_s]\\
&\qquad
-(0.06)\Dcal[\x_1,\x^1]
-(0.01)\Dcal[\x_1,\x_0]
+\Hcal_1
\le -\Big(\frac{1}{2}+\frac{1}{30}\Big)\int_{\mathbb{T}^1}\int_{\x_1}^{\x_0} w^2\util'\,d\x \,dy.
\end{aligned}
\end{equation}
This verifies the base case \eqref{base} with \(a_1=\frac{1}{30}\).

\step{2}
Similar to \cite{chen2026uniform} we introduce two sequences as follows: for each \(n\in\NN\),
\begin{equation} \label{seq}
a_n\coloneqq\Big(\frac{1}{15}\Big)2^{-n}, \qquad
C_{2n}\coloneqq a_{2n-1}+\frac{3}{2}+\frac{1}{2a_{2n-1}}, \qquad
C_{2n+1}\coloneqq a_{2n}.
\end{equation}
We now assume the induction hypothesis: for some odd \(i\ge1\), the following good term is available:
\begin{equation} \label{ind-hypo0}
-\Big(\frac{1}{2}+a_i\Big) \int_{\mathbb{T}^1} \int_{\x_i}^{\x_{i-1}} w^2 \util' \,d\x \,dy.
\end{equation}
We show that the induction hypothesis holds for \(i+2\).
To this end, we first observe
\begin{eqnarray} \label{step2-1}
-\Big(\frac{1}{2}+a_i\Big)\int_{\mathbb{T}^1} \int_{\x_i}^{\x_{i-1}} w^2 \util' \,d\x \,dy
&\le& -\Big(\frac{1}{2}+a_i\Big) \int_{\mathbb{T}^1} \int_{\x_i}^{\x^{i+1}} w^2 \util' \,d\x \,dy \nonumber \\
&\le&-\frac{\big(\frac{1}{2}+a_i\big)}{u_{i+1}-u_i} \int_{\mathbb{T}^1}\Big(\int_{\x_i}^{\x^{i+1}} w \util' \,d\x\Big)^2 \,dy.
\end{eqnarray}
Meanwhile, it follows from \eqref{transfer0} with \eqref{L2B} in Proposition \ref{prop:L2} that
\begin{equation} \label{step2-2}
\begin{aligned}
-(0.81)(\r_*)^{-(i+1)}C_{i+1}\int_{\mathbb{T}^1}\int_{J_{i+1}} (w_\x)^2 \,d\x \,dy
&=-(0.81)(\r_*)^{-(i+1)}C_{i+1}\Dcal[\x_{i+1},\x^{i+1}]\\
&\le -\frac{C_{i+1}}{u_{i+1}-u_i}\int_{\mathbb{T}^1}\Big(\int_{J_{i+1}} w\util'\,d\x\Big)^2 \,dy.
\end{aligned}
\end{equation}
Then, using \eqref{step2-1} and \eqref{step2-2}, we obtain
\begin{align*}
&-\frac{\big(\frac{1}{2}+a_i\big)}{u_{i+1}-u_i} \int_{\mathbb{T}^1}\Big(\int_{\x_i}^{\x^{i+1}} w \util' \,d\x\Big)^2 \,dy
-\frac{C_{i+1}}{u_{i+1}-u_i}\int_{\mathbb{T}^1}\Big(\int_{J_{i+1}} w\util'\,d\x\Big)^2 \,dy\\
&\qquad
\le -\frac{1}{u_{i+1}-u_i}\frac{\big(\frac{1}{2}+a_i\big)C_{i+1}}{\big(\frac{1}{2}+a_i\big)+C_{i+1}}
\int_{\mathbb{T}^1}\Big(\int_{\x_{i+1}}^{\x_i} w \util' \,d\x\Big)^2 \,dy.
\end{align*}
Thanks to \eqref{seq}, this is equivalent to the following:
\begin{eqnarray} \label{step2-3}
-\frac{\big(\frac{1}{2}+a_i\big)}{u_{i+1}-u_i} \int_{\mathbb{T}^1}\Big(\int_{\x_i}^{\x^{i+1}} w \util' \,d\x\Big)^2 \,dy
 -\frac{C_{i+1}}{u_{i+1}-u_i}\int_{\mathbb{T}^1}\Big(\int_{J_{i+1}} w\util'\,d\x\Big)^2 \,dy \nonumber \\
\le -\frac{\big(\frac{1}{2}+a_{i+1}\big)}{u_{i+1}-u_i} \int_{\mathbb{T}^1}\Big(\int_{\x_{i+1}}^{\x_i} w \util' \,d\x\Big)^2 \,dy.
\end{eqnarray}
We apply Lemma \ref{KV} together with Proposition \ref{prop:key} and \eqref{ztu0} to obtain
\begin{equation} \label{step2-4}
\begin{aligned}
&-\frac{\big(\frac{1}{2}+a_{i+1}\big)}{u_{i+1}-u_i} \int_{\mathbb{T}^1}\Big(\int_{\x_{i+1}}^{\x_i} w \util' \,d\x\Big)^2 \,dy
-\frac{1}{2}\Big(\frac{1}{2}+a_{i+1}\Big)(\lbar_{i+1})^{-1} \Dcal[\x_{i+1},\x_i]\\
&\qquad
\le -\frac{\big(\frac{1}{2}+a_{i+1}\big)}{u_{i+1}-u_i} \int_{\mathbb{T}^1}\Big(\int_{u_i}^{u_{i+1}} w \,dz\Big)^2 \,dy
-\frac{1}{2}\Big(\frac{1}{2}+a_{i+1}\Big) \int_{\mathbb{T}^1}\int_{u_i}^{u_{i+1}} (u_{i+1}-z)(z-u_i) (w_z)^2 \,dz \,dy\\
&\qquad
\le -\Big(\frac{1}{2}+a_{i+1}\Big)\int_{\mathbb{T}^1} \int_{u_i}^{u_{i+1}} w^2 \,dz \,dy
=\Big(\frac{1}{2}+a_{i+1}\Big) \int_{\mathbb{T}^1}\int_{\x_{i+1}}^{\x_i} w^2 \util' \,d\x \,dy,
\end{aligned}
\end{equation}
where \(\lbar_{i+1}=(1.94)(\r_*)^{i+1}\).
This cancels with \(\Hcal_{i+1}\) as follows:
\begin{equation} \label{step2-5}
\Big(\frac{1}{2}+a_{i+1}\Big) \int_{\mathbb{T}^1}\int_{\x_{i+1}}^{\x_i} w^2 \util' \,d\x \,dy
+\Hcal_{i+1}
=a_{i+1}\int_{\mathbb{T}^1}\int_{\x_{i+1}}^{\x_i} w^2 \util' \,d\x \,dy(\le 0).
\end{equation}
This yields the following good term: 
\begin{eqnarray} \label{step2-6}
a_{i+1}\int_{\mathbb{T}^1}\int_{\x_{i+1}}^{\x_i} w^2 \util' \,d\x \,dy
&\le& a_{i+1}\int_{\mathbb{T}^1}\int_{\x_{i+1}}^{\x^{i+2}} w^2 \util' \,d\x \,dy \nonumber\\
&\le& -\frac{a_{i+1}}{u_{i+1}-u_{i+2}} \int_{\mathbb{T}^1}\Big(\int_{\x_{i+1}}^{\x^{i+2}} w\util' \,d\x\Big)^2 \,dy.
\end{eqnarray}
Moreover, using \eqref{transfer0} with \eqref{L2B} in Proposition \ref{prop:L2}, we obtain
\begin{equation} \label{step2-7}
\begin{aligned}
-(0.82)(\r_*)^{-(i+2)}C_{i+2}\int_{\mathbb{T}^1}\int_{J_{i+2}} (w_\x)^2 \,d\x \,dy
&=-(0.82)(\r_*)^{-(i+2)}C_{i+2}\Dcal[\x_{i+2},\x^{i+2}]\\
&\le -\frac{C_{i+2}}{u_{i+1}-u_{i+2}}\int_{\mathbb{T}^1}\Big(\int_{J_{i+2}} w\util'\,d\x\Big)^2 \,dy.
\end{aligned}
\end{equation}
Combining \eqref{step2-6} and \eqref{step2-7}, we find that
\begin{align*}
&-\frac{a_{i+1}}{u_{i+1}-u_{i+2}} \int_{\mathbb{T}^1}\Big(\int_{\x_{i+1}}^{\x^{i+2}} w\util' \,d\x\Big)^2 \,dy
-\frac{C_{i+2}}{u_{i+1}-u_{i+2}}\int_{\mathbb{T}^1}\Big(\int_{J_{i+2}} w\util'\,d\x\Big)^2 \,dy\\
&\qquad
\le -\frac{1}{u_{i+1}-u_{i+2}}\frac{a_{i+1}C_{i+2}}{a_{i+1}+C_{i+2}}
\int_{\mathbb{T}^1}\Big(\int_{\x_{i+2}}^{\x_{i+1}}w\util' \,d\x\Big)^2 \,dy.
\end{align*}
Under the choices of \(a_i\) and \(C_i\) in \eqref{seq}, we rewrite this into the following:
\begin{equation} \label{step2-8}
\begin{aligned}
&-\frac{a_{i+1}}{u_{i+1}-u_{i+2}} \int_{\mathbb{T}^1}\Big(\int_{\x_{i+1}}^{\x^{i+2}} w\util' \,d\x\Big)^2 \,dy
-\frac{C_{i+2}}{u_{i+1}-u_{i+2}}\int_{\mathbb{T}^1} \Big(\int_{J_{i+2}} w\util'\,d\x\Big)^2 \,dy\\
&\qquad
\le -\frac{a_{i+2}}{u_{i+1}-u_{i+2}} \int_{\mathbb{T}^1}\Big(\int_{\x_{i+2}}^{\x_{i+1}}w\util' \,d\x\Big)^2 \,dy.
\end{aligned}
\end{equation}
Then, using Lemma \ref{KV} with Proposition \ref{prop:key} and \eqref{ztu0}, we obtain
\begin{equation} \label{step2-9}
\begin{aligned}
&-\frac{a_{i+2}}{u_{i+1}-u_{i+2}} \int_{\mathbb{T}^1}\Big(\int_{\x_{i+2}}^{\x_{i+1}}w\util' \,d\x\Big)^2 \,dy
-\frac{a_{i+2}}{2}(\lbar_{i+2})^{-1} \Dcal[\x_{i+2},\x_{i+1}]\\
&\qquad
\le -\frac{a_{i+2}}{u_{i+1}-u_{i+2}} \int_{\mathbb{T}^1}\Big(\int_{u_{i+2}}^{u_{i+1}}w \,dz\Big)^2\, dy
-\frac{a_{i+2}}{2} \int_{\mathbb{T}^1}\int_{u_{i+2}}^{u_{i+1}} (u_{i+1}-z)(z-u_{i+2}) (w_z)^2 \,dz \,dy\\
&\qquad
\le -a_{i+2} \int_{\mathbb{T}^1}\int_{u_{i+2}}^{u_{i+1}} w^2 \,dz \,dy
=-a_{i+2} \int_{\mathbb{T}^1}\int_{\x_{i+2}}^{\x_{i+1}} w^2 \util' \,d\x \,dy.
\end{aligned}
\end{equation}
This with \(\Hcal_{i+2}\) recovers the induction hypothesis \eqref{ind-hypo0} for \(i+2\)---gathering \eqref{step2-1}-\eqref{step2-9}, we have
\begin{equation} \label{step2}
\begin{aligned}
&-\Big(\frac{1}{2}+a_i\Big) \int_{\mathbb{T}^1}\int_{\x_i}^{\x_{i-1}} w^2 \util' \,d\x \,dy
+\Hcal_{i+1}+\Hcal_{i+2}\\
&-(0.81)(\r_*)^{-(i+1)}C_{i+1}\Dcal[\x_{i+1},\x^{i+1}]
-(0.82)(\r_*)^{-(i+2)}C_{i+2}\Dcal[\x_{i+2},\x^{i+2}]\\
&-\frac{1}{2}\Big(\frac{1}{2}+a_{i+1}\Big)(1.94)^{-1}(\r_*)^{-(i+1)} \Dcal[\x_{i+1},\x_i]
-\frac{a_{i+2}}{2}(0.51)(\r_*)^{-(i+2)} \Dcal[\x_{i+2},\x_{i+1}]\\
&\qquad\le -\Big(\frac{1}{2}+a_{i+2}\Big) \int_{\mathbb{T}^1}\int_{\x_{i+2}}^{\x_{i+1}} w^2 \util' \,d\x \,dy.
\end{aligned}
\end{equation}
We now examine how much of the diffusion term is used.
We recall \eqref{seq}: 
\[
a_i=\frac{1}{15}2^{-i}, \quad
a_{i+1}=\frac{1}{15}2^{-(i+1)}, \quad
a_{i+2}=\frac{1}{15}2^{-(i+2)}, \quad
C_{i+1}=a_i+\frac{3}{2}+\frac{1}{2a_i}, \quad
C_{i+2}=a_{i+1}.
\]
Note that for any odd \(i\in\NN\), \(C_{i+3}\le4C_{i+1}\), and thus we have
\[
(0.81)(\r_*)^{-(i+1)}C_{i+1}
\le (0.81)(\r_*)^{-2}C_2
< 0.63.
\]
Moreover, for any odd \(i\in\NN\), the other coefficients of the diffusion term are all less than \(1\):
\[
(0.82)(\r_*)^{-(i+2)}C_{i+2}, \quad
\frac{1}{2}\Big(\frac{1}{2}+a_{i+1}\Big)(1.94)^{-1}(\r_*)^{-(i+1)}, \quad
\frac{a_{i+2}}{2}(0.51)(\r_*)^{-(i+2)} < 0.01.
\]
Thus, from \eqref{step2}, we find that for each odd \(i\in\NN\),
\begin{equation} \label{step2-0}
\begin{aligned}
&-\Big(\frac{1}{2}+a_i\Big) \int_{\mathbb{T}^1}\int_{\x_i}^{\x_{i-1}} w^2 \util' \,d\x \,dy
+\Hcal_{i+1}+\Hcal_{i+2}
-(0.63)\Dcal[\x_{i+1},\x^{i+1}]\\
&-(0.01)\Dcal[\x_{i+2},\x^{i+2}]
-(0.01)\Dcal[\x_{i+1},\x_i]
-(0.01)\Dcal[\x_{i+2},\x_{i+1}]\\
&\qquad\le -\Big(\frac{1}{2}+a_{i+2}\Big) \int_{\mathbb{T}^1} \int_{\x_{i+2}}^{\x_{i+1}} w^2 \util' \,d\x \,dy.
\end{aligned}
\end{equation}

Combining the base case \eqref{step0and1} and the inductive step \eqref{step2-0}, we observe that for any interval on \((a,b)\), less than \(\frac{9}{10}\) of the diffusion term, i.e., \(\frac{9}{10}\Dcal[a,b]\) is used.
Thus, we conclude that
\begin{align*}
\frac{d}{dt}\frac{1}{2}\int_{\Omega} w^2 \,d\xi \,dy
&=-\frac{M}{2s} \int_{\mathbb{T}^1}\Big(\int_\RR w \util'\,d\x\Big)^2 \,dy
-\frac{1}{2}\int_{\mathbb{T}^1}\int_\RR w^2 \util' \,d\x \,dy
- \int_{\mathbb{T}^1} \int_\RR (w_\x)^2 \,d\x \,dy\\
&\le -\frac{1}{10}\int_{\mathbb{T}^1}\int_\RR (w_\x)^2 \,d\x \,dy,
\end{align*}
which yields \eqref{induction step}.
This completes the proof of Theorem \ref{induction step thm}.
\end{proof}
\
\\
{\bf Acknowledgement:} The first author is partially supported by National Science Foundation (DMS-2306258). The third author is partially supported by National Science Foundation (DMS-2206218).
\bibliographystyle{plain}
\bibliography{reference}
\end{document}

%% file: newmacros.tex
\usepackage{amssymb,mathrsfs,mathtools,graphicx,enumerate,color,colortbl}
\usepackage{hyperref}
\usepackage{comment}

\newcommand{\Dcal}{\mathcal{D}}

\newcommand{\Hcal}{\mathcal{H}}

\newcommand{\Scal}{\mathcal{S}}

\renewcommand{\hbar}{\bar{h}}

\newcommand{\lbar}{\bar{\lambda}}

\newcommand{\util}{\tilde{u}}

\renewcommand{\d}{\delta}

\renewcommand{\l}{\lambda}

\newcommand{\x}{\xi}

\renewcommand{\r}{\rho}
\newcommand{\s}{\sigma}

\newcommand{\e}{\varepsilon}

\newcommand{\RR}{{\mathbb R}}
\newcommand{\NN}{{\mathbb N}}

\newcommand{\abs}[1]{\left|#1\right|}

\newcommand{\norm}[1]{\left\|#1\right\|}

\newcommand{\step}[1]{\vskip0.2cm \noindent{\it Step #1:} }

\definecolor{black}{rgb}{0.0, 0.0, 0.0}
\definecolor{red}{rgb}{1.0, 0.5, 0.5}